\documentclass[10pt]{article}
\makeatletter
 \oddsidemargin .5cm \evensidemargin .5cm
\marginparwidth 40pt \marginparsep 10pt \topmargin 0.5cm
 \headsep1pt
 \headheight 0pt
 \textheight 8.5in
 \textwidth 6.5in
 \sloppy
\brokenpenalty=10000

\renewcommand{\thefootnote}{}

\linespread{1.1}

\usepackage{epsfig,amsmath,graphicx,amssymb,overpic}

\usepackage{graphicx}
\usepackage{dcolumn}
\usepackage{bm}
\usepackage{graphicx}
\usepackage{subfigure}
\usepackage{epsfig,amsmath,graphicx,amssymb,overpic,cite}

\usepackage{graphicx}
\usepackage{dcolumn}
\usepackage{bm}
\usepackage{graphicx}
\usepackage{subfigure}
\usepackage{amsthm}

\usepackage{pdfsync}
\usepackage{cite}
\usepackage{amscd}
\usepackage{amsmath}
\usepackage{latexsym}
\usepackage{amsfonts}
\usepackage{amssymb}
\usepackage{amsthm}
\usepackage{graphicx}
\usepackage{indentfirst}
\usepackage{enumerate}
\usepackage{amstext}
\usepackage[dvipdfm,
            pdfstartview=FitH,
            CJKbookmarks=true,
            bookmarksnumbered=true,
            bookmarksopen=true,
            colorlinks=true, 
            pdfborder=001,   
            citecolor=magenta,
            linkcolor=blue,
            ]{hyperref}

 \allowdisplaybreaks[4]

\newtheorem{corollary}{Corollary}[section]
\newtheorem{lemma}{Lemma}[section]

\newtheorem{theorem}{Theorem}[section]

\newtheorem{defi}{\textbf Definition}[section]

\setcounter{secnumdepth}{15} \setcounter{tocdepth}{15}

\usepackage{tikz}
\usetikzlibrary{positioning} 
\usepackage{xcolor} 

\topmargin=-0.6in \oddsidemargin=-0.80in

\topmargin=-0.3in \oddsidemargin=0.00in

\textheight=225mm \textwidth=170mm
\parindent=0.1in

\def\be{\begin{equation}}
\def\ee{\end{equation}}
\def\bee{\begin{eqnarray}}
\def\ene{\end{eqnarray}}
\def\bes{\begin{subequations}}
\def\ees{\end{subequations}}
\def\det{{\rm det}}
\def\d{\displaystyle}
\def\v{\vspace{0.1in}}
\def\no{{\nonumber}}

\begin{document}

\baselineskip=13pt
\renewcommand {\thefootnote}{\dag}
\renewcommand {\thefootnote}{\ddag}
\renewcommand {\thefootnote}{ }

\pagestyle{plain}

\begin{center}
\baselineskip=16pt \leftline{} \vspace{-.3in} {\Large \bf Well-posedness of scattering data for the derivative nonlinear
Schr\"odinger equation in $H^s(\mathbb{R})$
} \\[0.2in]
\end{center}

\begin{center}
{\bf Weifang Weng$^{a}$, Zhenya Yan}$^{b,c,*}$\footnote{$^{*}${\it Email address}: zyyan@mmrc.iss.ac.cn (Corresponding author)}  \\[0.08in]
{\it$^a$Department of Mathematical Sciences, Tsinghua University, Beijing 100084, China} \\
$^b$School of Mathematical Sciences, University of Chinese Academy of Sciences, Beijing 100049, China \\
$^c$Key Laboratory of Mathematics Mechanization, Academy of Mathematics and Systems Science, \\ Chinese Academy of Sciences, Beijing 100190, China\\[0.18in]
\end{center}

\noindent {\bf Abstract:}\, {\small We prove the well-posedness results of scattering data for the derivative nonlinear Schr\"odinger equation in  $H^{s}(\mathbb{R})(s\geq\frac12)$. We show that the reciprocal of the transmission coefficient can be written as the sum of some iterative integrals, and its logarithm can be written as the sum of some connected iterative integrals. And we provide the asymptotic properties of the first few iterative integrals of the reciprocal of the transmission coefficient. Moreover, we provide some regularity properties of the reciprocal of the transmission coefficient related to scattering data in $H^{s}(\mathbb{R})$.
}

\vspace{0.1in} \noindent {\it Keywords:}\, Derivative nonlinear Schr\"odinger equation; Lax pair;
Inverse scattering transform; Transmission coefficient; Well-posedness



\section{Introduction}

As an important and fundamental nonlinear mathematical and physical model, the derivative nonlinear Schr\"odinger (DNLS) equation
\bee \label{dnls}
iq_{t}+q_{xx}\pm i(|q|^2q)_x=0,\quad q=q(x,t), \quad x\in\mathbb{R},
\ene
appears in many fields,  such as the wave propagation of circular polarized nonlinear Alfv{\' e}n waves in plasmas \cite{Rogister1971,Mjolhus1976, Mio1976, Mjolhus1989, Mjolhus1997}, weak nonlinear electromagnetic waves in ferromagnetic \cite{Nakata1991}, antiferromagnetic \cite{Daniel2002} or dielectric \cite{Nakata1993} systems under external magnetic fields.
Without loss of generality, one can take sign $+$ (since the case sign $+$ can be transformed into sign $-$ by means of $x \rightarrow -x$). Kaup and Newell~\cite{Kaup1978} showed that Eq.~(\ref{dnls}) was completely integrable, and has the following modified Zakharov-Shabat eigenvalue problem (Lax pair)~\cite{Kaup1978}:
\begin{align}\label{lax}
\begin{aligned}
\psi_x=U(x,t,\lambda)\psi,\\
\psi_t=V(x,t,\lambda)\psi,
\end{aligned}
\end{align}
with
\begin{equation}
\begin{array}{l}
U(x,t,\lambda)=-i\sigma_3(\lambda^2+i\lambda Q),\vspace{0.1in}\\
V(x,t,\lambda)=\left(\!\!\begin{array}{cc}
-i(2\lambda^4-\lambda^2|q|^2)& 2\lambda^3q-\lambda|q|^2q+i\lambda q_x  \vspace{0.05in}\\
-2\lambda^3q^*+\lambda|q|^2q^*+i\lambda q^*_x & i(2\lambda^4-\lambda^2|q|^2)
\end{array}\!\!\right),\vspace{0.1in}\\
Q=\!\left(\!\!\begin{array}{cc}
0& q(x,t)  \vspace{0.05in}\\
q^*(x,t) & 0
\end{array}\!\!\right), \quad
\sigma_3\!=\!\left(\!\!\begin{array}{cc} 1& 0 \vspace{0.05in}\\
0 &-1 \end{array}\!\!\right),
\end{array}
\end{equation}
where $\psi(x,t; \lambda)=(\psi_1(x,t; \lambda),\psi_{2}(x,t; \lambda))^{\rm T}$ stands for the eigenvector, $\lambda\in\mathbb{C}$ is the spectral parameter. Moreover, Eq.~(\ref{dnls}) also possesses an infinite number of conservation laws, for example,
\begin{align}
\label{con-laws}
\left\{\begin{aligned}
&H_0=\int_{\mathbb{R}}|q|^2dx, \\
&H_{1}=\mathrm{Im}\int_{\mathbb{R}}q^*q_xdx+\frac12\int_{\mathbb{R}}|q|^4dx,\\
&H_{2}=\int_{\mathbb{R}}|q_x|^2-\frac32\mathrm{Im}(|q|^2qq_x^*)+\frac12|q|^6dx,
\end{aligned}\right.
\end{align}
where the star denotes the complex conjugate.

Note that the DNLS equation (\ref{dnls}) is $L^2$-norm being invariant under the scaling:
\bee
q(x,t)\rightarrow z^{\frac12}q(zx,z^2t),\quad z>0.
\ene

The inverse scattering transform (IST) was investigated for the DNLS equation (\ref{dnls}) with zero boundary conditions (ZBCs) to obtain its one-soliton solution~\cite{Kaup1978} and $N$-soliton solutions~\cite{Zhou2007}. The IST was also considered for the DNLS equation (\ref{dnls}) with non-zero boundary conditions (NZBCs)~\cite{Kawata1978, Chen2004, Chen2006, Lashkin2007}. The explicit double-pole solutions were found for the DNSL equation (\ref{dnls}) with ZBCs/NZBCs by the ISTs with the matrix Riemann-Hilbert problems~\cite{zhang-yan-jns20}. The long-time leading-order asymptotic behavior was analyzed for the DNLS equation (\ref{dnls})~\cite{RH1,RH3} via the Deift-Zhou's method~\cite{RH2}.

The local well-posedness for the Eq.~(\ref{dnls}) was proved in the energy space $H^1(\mathbb{R})$~\cite{Hayashi-1992,Hayashi-1993}. By using mass and energy conservation laws, Hayashi and Ozawa~\cite{Hayashi-1,Ozawa-1} proved that Eq.~(\ref{dnls}) was
global well-posedness in the energy space $H^1(\mathbb{R})$ under the following condition:
\bee\label{L2-finite}
||q(x,0)||_{L^2}<\sqrt{2\pi}.
\ene
Then, the condition (\ref{L2-finite}) was improved by Wu~\cite{wu-1,wu-2}. Moreover, Guo and Wu~\cite{wu-3} proved that Eq.~(\ref{dnls}) is
globally well-posed in the energy space $H^{\frac12}(\mathbb{R})$. Recently, the global existence of the DNLS equation (\ref{dnls}) was studied by the IST~\cite{ge,ge1,ge2,ge3}.  Moreover, Bahouri and Perelman~\cite{dnls-Bahouri} showed that the DNLS equation (\ref{dnls}) was globally well-posed for general Cauchy condition in $H^{1/2}(\mathbb{R})$ and that the $H^{1/2}$-norm of the solutions still remained globally bounded in time.

Recently, Koch and Tataru~\cite{dnls-Koch} studied the (de)focusing cubic nonlinear Schr\"odinger (NLS) equation
\bee \label{nls}
iq_{t}+q_{xx}\pm 2|q|^2q=0,\quad q=q(x,t),
\ene
and provided a modified conservation function for the NLS equation (\ref{nls}), and showed that there existed a conserved energy which is equivalent to the $H^s$-norm of the solution for each $s>1/2$ with the aid of IST.
Then, Koch and Liao~\cite{dnls-Koch-1} studied the one dimensional Gross-Pitaevskii (GP) equation
\bee\label{GP}
iq_{t}+q_{xx}=2q(|q|^2-1),\quad q=q(x,t),
\ene
and proved the global-in-time well-posedness of the GP equation (\ref{GP}) in the energy space. Recently, they~\cite{dnls-Koch-2} further constructed a family of conserved energies for the one dimensional Gross-Pitaevskii equation (\ref{GP}), but in the low regularity case.

In this paper, motivated by the idea for the NLS equation~\cite{dnls-Koch}, we prove the well-posedness results of scattering data for the DNLS equation (\ref{dnls}) with initial data $q(x)\in H^s(\mathbb{R})(s\geq\frac12)$ in the energy space, which is a complete metric space equipped with a newly introduced metric and the energy norm describing the $H^{s}(\mathbb{R})$ regularities of the solutions. We provide some regularity properties of transmission coefficient related to scattering data in $H^{s}(\mathbb{R})$.


The main conclusion of this paper is the following theorem.

\begin{theorem}\label{1-th3}
Let $q(x)\in L^2(\mathbb{R})$ and $s_{11}(\lambda)$ be the reciprocal of the transmission coefficient of the modified Zakharov-Shabat spectral problem (\ref{lax}) associated to the DNLS equation (\ref{dnls}). Then one has the following properties:

(1) $\ln s_{11}(\lambda)=\sum_{j=1}^{\infty}b_{2j}(\lambda)$ with
\bee
b_{2j}(\lambda)=(-1)^{j}\int_{\Sigma_j}\lambda^{2j}\prod_{k=1}^{j}q(y_k)q^*(x_k)e^{2i\lambda^2(y_k-x_k)}dx_1dy_1\cdots dx_jdy_j,
\ene
being formal linear combinations of connected integrals, where $\Sigma_j$ is any domain which obeys the condition $x_k<y_k$ for all $k~(k\leq j)$, $\lambda\in\mathbb{C}$ is a spectral parameter, and the star denotes the complex conjugate.

(2) The following estimates hold:
\bee\label{coro1-1}
\ln s_{11}(\lambda)\sim -\frac{i}{2}||q(x)||^2_{L^2(\mathbb{R})},\quad \lambda\rightarrow\infty,
\ene
and
\bee\label{coro1-2}
s_{11}(\lambda)\sim e^{-\frac{i}{2}||q(x)||^2_{L^2(\mathbb{R})}},\quad \lambda\rightarrow\infty.
\ene
\end{theorem}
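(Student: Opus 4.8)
The plan is to work directly from the spatial Lax equation $\psi_x=U\psi$ in (\ref{lax}) and to reach $\ln s_{11}$ through a logarithmic-derivative (Riccati) reduction, which makes the connected structure manifest. First I would strip the oscillation by writing $\psi=e^{-i\lambda^2x\sigma_3}N$, so that $N$ solves $N_x=\lambda\left(\begin{smallmatrix}0&\alpha\\\beta&0\end{smallmatrix}\right)N$ with $\alpha(x)=q(x)e^{2i\lambda^2x}$ and $\beta(x)=-q^*(x)e^{-2i\lambda^2x}$, and normalize $N(x)\to I$ as $x\to-\infty$; the reciprocal transmission coefficient is then $s_{11}=\lim_{x\to+\infty}N_{11}(x)$, analytic in the half-plane $\mathrm{Im}\,\lambda^2=\eta>0$. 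Since the kernel $e^{2i\lambda^2(y-x)}\mathbf 1_{\{x<y\}}$ is, for $\eta>0$, integrable in the difference variable with $L^1$-norm $(2\eta)^{-1}$, a Schur/Young estimate shows that every iterated integral below converges absolutely for $q\in L^2(\mathbb R)$ and that the associated series converge geometrically, so the construction is rigorous in that half-plane (the values on $\mathrm{Im}\,\lambda^2=0$ being the formal/boundary objects).

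For part (1) I would proceed along two complementary lines. Iterating the Volterra form of the $N$-equation yields a Neumann series whose $(1,1)$ entry, because the perturbation is off-diagonal and must be traversed an even number of times to return to the index $1$, contains only even orders: $s_{11}=1+\sum_{j\ge1}a_{2j}$, where $a_{2j}$ is a $\lambda^{2j}$-weighted iterated integral of $\prod_k q(\cdot)q^*(\cdot)$ over an ordered simplex. Passing to the logarithm is then the linked-cluster (exponential/cumulant) formula: $\ln s_{11}=\sum_{j\ge1}b_{2j}$, where $b_{2j}$ is the connected part, a finite signed combination of iterated integrals of the displayed product form over domains respecting the pairwise constraints $x_k<y_k$. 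The cleanest way to see connectedness directly is the Riccati reduction: setting $\Gamma=N_{21}/N_{11}$ gives $\Gamma_x=\lambda\beta-\lambda\alpha\Gamma^2$ with $\Gamma(-\infty)=0$ and $\partial_x\ln N_{11}=\lambda\alpha\Gamma$, whence $\ln s_{11}=\lambda\int_{\mathbb R}\alpha(x)\Gamma(x)\,dx$; expanding $\Gamma=\Gamma_1+\Gamma_3+\cdots$ in powers of the potential produces, term by term, a single nested integral whose variables are chained by their domains, i.e. exactly a connected integral. The first term is unambiguous, $b_2=-\lambda^2\iint_{x<y}q(y)q^*(x)e^{2i\lambda^2(y-x)}\,dx\,dy$, reproducing the $j=1$ case of the stated formula, and the higher $b_{2j}$ inherit the claimed structure, their signs and combinatorial weights being read off from the iteration.

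For part (2) I would extract the large-$\lambda$ behaviour from $b_2$ alone. Writing $u=y-x>0$ and rescaling $v=\lambda^2u$, the weight $\lambda^2e^{2i\lambda^2u}$ concentrates at $u=0$, and the regularized value $\int_0^\infty e^{2iv}\,dv=\tfrac i2$ gives $b_2\to-\tfrac i2\int_{\mathbb R}|q|^2\,dx=-\tfrac i2\|q\|_{L^2}^2$ as $\lambda\to\infty$ in the half-plane. It then remains to show $\sum_{j\ge2}b_{2j}=o(1)$: each higher connected integral carries extra oscillatory factors $e^{2i\lambda^2(y_k-x_k)}$ whose net effect is of strictly lower order in $\lambda$, so that $\ln s_{11}\sim b_2\sim-\tfrac i2\|q\|_{L^2}^2$, which is (\ref{coro1-1}); exponentiating gives $s_{11}\sim e^{-\frac i2\|q\|_{L^2}^2}$, which is (\ref{coro1-2}).

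I expect the main obstacle to be the uniform control of the tail $\sum_{j\ge2}b_{2j}$ in part (2) under only $q\in L^2(\mathbb R)$: one needs estimates on all connected integrals simultaneously that are both summable in $j$ and decaying in $\lambda$, which forces quantitative oscillatory-integral bounds in low regularity. I would first establish the asymptotics for Schwartz (or smooth compactly supported) $q$, where repeated integration by parts in the phases $e^{2i\lambda^2(y_k-x_k)}$ yields explicit gains in $\lambda$, and then pass to general $L^2$ data by density, using the geometric Schur/Young bound $|b_{2j}|\lesssim\big(C|\lambda|^2\|q\|_{L^2}^2/\eta\big)^{j}$ (bounded and summable when $\lambda\to\infty$ inside a cone $\arg\lambda^2\in(\delta,\pi-\delta)$) to make the passage to the limit uniform. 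By contrast, the combinatorial identification of each $b_{2j}$ as the connected part in (1) is essentially forced once the Riccati representation $\ln s_{11}=\lambda\int\alpha\Gamma$ is in place, and should reduce to routine order-by-order bookkeeping.
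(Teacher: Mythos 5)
Your part (1) follows the same skeleton as the paper's argument: a Volterra/Neumann iteration giving $s_{11}(\lambda)=1+\sum_{j\geq1}s_{2j}(\lambda)$ with only even-order terms (the paper's Theorem \ref{th1}), formal Taylor expansion of the logarithm (Theorem \ref{th2}), and identification of each $b_{2j}$ as the connected part. Where you differ is the mechanism for connectedness: the paper encodes the iterated integrals as words in $X,Y$, shows $s_{11}$ is group-like under the coproduct (Lemma \ref{le2}), and concludes $\ln s_{11}$ is primitive, hence a combination of connected integrals; your linked-cluster formulation is the same statement in different language, but your Riccati route ($\Gamma=N_{21}/N_{11}$, $\partial_x\ln N_{11}=\lambda\alpha\Gamma$, $\ln s_{11}=\lambda\int_{\mathbb R}\alpha\Gamma\,dx$) is a genuinely different and more self-contained derivation, since every term of the odd-power expansion of $\Gamma$ is automatically a single nested, hence connected, integral. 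What it buys is connectedness without coproduct bookkeeping; what it costs is that the signs and combinatorial weights you say can be ``read off from the iteration'' are exactly the nontrivial computation the paper carries out explicitly (Theorem \ref{th3}, producing $b_4=-2\lambda^4 XXYY$ and $b_6=-4\lambda^6(XXYXYY+3XXXYYY)$), so your order-by-order bookkeeping is deferred rather than done. For part (2), your rescaled evaluation of $b_2$ agrees with the paper's exact Fourier computation $s_2(\lambda)=-\frac{i}{2}\|q\|_{L^2}^2+i\int\frac{\xi}{4\lambda^2+2\xi}|\hat q(\xi)|^2d\xi$ in Section 5, and your claim that higher connected integrals are of strictly lower order is precisely the paper's Lemma \ref{le10}, $b_{2j}(\lambda)=\mathcal O(\lambda^{-2j+2})$ for $j\geq2$, which is again harvested from connectedness.

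There is, however, a genuine gap in your part (2), and you half-identify it yourself. Your uniform tail bound $|b_{2j}|\lesssim\bigl(C|\lambda|^2\|q\|_{L^2}^2/\eta\bigr)^{j}$ with $\eta=\mathrm{Im}\,\lambda^2$ degenerates in the cone $\arg\lambda^2\in(\delta,\pi-\delta)$: there $\eta\sim|\lambda|^2$, so the ratio is $(C_\delta\|q\|_{L^2}^2)^{j}$, which is bounded in $\lambda$ but neither decaying nor summable in $j$ unless $\|q\|_{L^2}$ is small. Consequently your Schwartz-plus-density passage is uniform only for small data; for general $q\in L^2(\mathbb R)$ the geometric majorant diverges and $\sum_{j\geq2}b_{2j}=o(1)$ does not follow from it. (A simplex-ordering argument recovers a $1/j!$ gain for the $s_{2j}$, but not for the signed connected combinations $b_{2j}$, so this does not rescue the logarithmic series either; note also that for large data $s_{11}$ may vanish at eigenvalues in $\mathrm{Im}\,\lambda^2>0$, so $\ln s_{11}$ only makes sense for $|\lambda|$ large, which blocks a naive limit passage at fixed $\lambda$.) This is exactly the point at which the paper abandons Schur/Young bounds in favor of the adapted spaces $U^2$, $V^2$, $DU^2$: Theorem \ref{th6} controls $|\lambda^{-2j}s_{2j}|+|\lambda^{-2j}b_{2j}|$ by powers of the modulation- and scale-adapted norm $\|e^{-i\mathrm{Re}\lambda^2x}q\|_{l^2_{\mathrm{Im}\lambda^2}DU^2}$, which — unlike $\|q\|_{L^2}$ — can be made small by frequency splitting (Lemma \ref{le6}, and the localized estimates of Theorem \ref{th7}), and this is what makes the bounds simultaneously summable in $j$ and useful as $\lambda\to\infty$ without a smallness assumption. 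To repair your proposal you would need to replace the $L^2$-based geometric bound by estimates of this adapted type; integration by parts for Schwartz data plus density, as you propose, cannot by itself deliver the required uniformity in $j$.
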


The rest of this paper is arranged as follows. In Sec. 2, we introduce some basic properties about the inverse scattering transform of the DNLS equation (\ref{dnls}) with $q(x)\in\mathcal{S}(\mathbb{R})$. In Sec. 3, we give the formal expansions of the reciprocal of the transmission coefficient, $s_{11}(\lambda)$, and its logarithmic function $\ln s_{11}(\lambda)$. In Sec. 4, we construct  iterative integrals $B_j(\lambda)$ arising from a formal expansion of  $\ln s_{11}(\lambda)$ into a Hopf algebra such that we can proof  the first conclusion of Theorem \ref{1-th3}. In Sec. 5, we give the boundary	estimate for the leading term in both $s_{11}(\lambda)-1$ and $\ln s_{11}(\lambda)$. In Sec. 6, we recall the function spaces $U^p,V^p$ and $DU^p$ and give the boundary	estimate for the iterative integrals $s_{2j}$ of $s_{11}(\lambda)-1$ with $q(x)\in H^s(\mathbb{R})$. In Sec. 7, we have the asymptotic expressions for $b_4(\lambda)$ and $b_6(\lambda)$. In Sec. 8, we give the expansions for the iterative integrals $b_{2j}(\lambda)$ with $q(x)\in H^s(\mathbb{R})$. Finally, we give some conclusions in Sec. 9.

\section{Preliminaries: Jost solutions and scattering data}

In this section, we review some basic properties about the inverse scattering transform of the DNLS equation (\ref{dnls}) with $q(x)\in\mathcal{S}(\mathbb{R})$ ($\mathcal{S}(\mathbb{R})$ represents Schwarz space)~\cite{ge,ge1,ge2,ge3,dnls-Bahouri,Ablowitz-2,Lee-1,Pelinovsky-1}. For the Lax pair (\ref{lax}) of the DNLS equation (\ref{dnls}), it is easy to see that the compatibility condition, $U_t-V_x+[U, V]=0$ (i.e., zero-curvature equation), of the Lax pair (\ref{lax}) just generates the DNLS equation (\ref{dnls}).
The zero-curvature equation has the advantage that it is well defined even without decay assumptions on the initial data, since it is all formal calculations.

For the given $q(x)\in\mathcal{S}(\mathbb{R})$, i.e., the potential $q(x)\rightarrow0$ as $x\rightarrow\pm\infty$, one has the asymptotics of Jost solutions (eigenfunctions) of  Lax pair (\ref{lax}) as
\bee\no
\psi_x=\left(\!\!\begin{array}{cc}
-i\lambda^2& 0  \vspace{0.05in}\\
0 & i\lambda^2
\end{array}\!\!\right)\psi,\quad x\rightarrow\pm\infty.
\ene

Therefore, it is natural to introduce the eigenfunction defined by the following boundary conditions
\begin{align}
\begin{aligned}
&\phi(x,\lambda)\sim e^{-i\lambda^2x}\begin{pmatrix} 1   \\ 0 \end{pmatrix},\quad x\rightarrow-\infty,\vspace{0.05in}\\
&\overline{\phi}(x,\lambda)\sim e^{i\lambda^2x}\begin{pmatrix} 0   \\ 1 \end{pmatrix},\quad x\rightarrow-\infty,\vspace{0.05in}\\
&\varphi(x,\lambda)\sim e^{i\lambda^2x}\begin{pmatrix} 0   \\ 1 \end{pmatrix},\quad x\rightarrow+\infty,\vspace{0.05in}\\
&\overline{\varphi}(x,\lambda)\sim e^{-i\lambda^2x}\begin{pmatrix} 1   \\ 0 \end{pmatrix},\quad x\rightarrow+\infty.
\end{aligned}
\end{align}

The functions $\phi(x,\lambda), \overline{\phi}(x,\lambda), \varphi(x,\lambda)$ and $\overline{\varphi}(x,\lambda)$ are called Jost solutions. The Jost solution $\phi(x,\lambda), \varphi(x,\lambda)$ can be analytically extended to $L_+=\{\lambda\in\mathbb{C}|\mathrm{Im}\lambda^2>0\}$, $C^\infty$ up to the boundary. The Jost solution $\overline{\phi}(x,\lambda), \overline{\varphi}(x,\lambda)$ can be analytically extended to $L_-=\{\lambda\in\mathbb{C}|\mathrm{Im}\lambda^2<0\}$, $C^\infty$ up to the boundary.

For $\lambda\in\mathbb{R}\cup i\mathbb{R}$, since the Jost solutions solve the both parts of the modified Zakharov-Shabat eigenvalue problem (\ref{lax}), there is a constant scattering matrix $S(\lambda)=(s_{ij})_{2\times 2}$ independent of $x,\,t$, which holds the following relation:
\bee\label{s11s12}
\left(\phi(x,\lambda),\overline{\phi}(x,\lambda)\right)=\left(\overline{\varphi}(x,\lambda),\varphi(x,\lambda)\right)\left(\!\!\begin{array}{cc}
s_{11}(\lambda)& s_{12}(\lambda)  \vspace{0.05in}\\
s_{21}(\lambda) & s_{22}(\lambda)
\end{array}\!\!\right)
\ene

The functions $s_{11}(\lambda)^{-1},s_{22}(\lambda)^{-1}$ are called transmission coefficients and $\frac{s_{21}(\lambda)}{s_{11}(\lambda)},\frac{s_{12}(\lambda)}{s_{22}(\lambda)}$ are called reflection coefficients. The Jost solutions have the following symmetry:
\begin{align}\label{jost-symmetry}
\begin{aligned}
&\phi(x,\lambda)=\sigma_3\phi(x,-\lambda),\quad \varphi(x,\lambda)=-\sigma_3\varphi(x,-\lambda),\vspace{0.05in}\\
&\phi(x,\lambda)=\left(\!\!\begin{array}{cc}
0& 1  \vspace{0.05in}\\
-1 & 0
\end{array}\!\!\right)\overline{\phi}^*(x,\lambda^*),\quad\varphi(x,\lambda)=\left(\!\!\begin{array}{cc}
0& -1  \vspace{0.05in}\\
1 & 0
\end{array}\!\!\right)\overline{\varphi}^*(x,\lambda^*).
\end{aligned}
\end{align}

According to Eqs.~(\ref{s11s12}) and (\ref{jost-symmetry}), the symmetry of the scattering data can be obtained. For $\lambda\in\mathbb{R}\cup i\mathbb{R}$,
\begin{align}\no
\begin{aligned}
&s_{11}(\lambda)=s_{11}(-\lambda),\quad s_{11}(\lambda)=s_{22}^*(\lambda^*),\vspace{0.05in}\\
&s_{21}(\lambda)=-s_{21}(-\lambda),\quad s_{12}(\lambda)=-s_{21}^*(\lambda^*).
\end{aligned}
\end{align}

Due to $\det(S(\lambda))=1$, there is the following equation:
\begin{align}\no
\begin{aligned}
&|s_{11}(\lambda)|^2+|s_{21}(\lambda)|^2=1,\quad \lambda\in\mathbb{R},\vspace{0.05in}\\
&|s_{11}(\lambda)|^2-|s_{21}(\lambda)|^2=1,\quad \lambda\in i\mathbb{R}.
\end{aligned}
\end{align}

It follows from Eq.~(\ref{s11s12}) that the scattering data have the Wronskian representations:
\begin{align}\label{det-1}
\begin{aligned}
&s_{11}(\lambda)=\det(\phi(x,\lambda),\varphi(x,\lambda)),\quad s_{12}(\lambda)=\det(\overline{\phi}(x,\lambda),\varphi(x,\lambda)),\vspace{0.05in}\\
&s_{21}(\lambda)=-\det(\phi(x,\lambda),\overline{\varphi}(x,\lambda)),\quad
s_{22}(\lambda)=-\det(\overline{\phi}(x,\lambda),\overline{\varphi}(x,\lambda)).
\end{aligned}
\end{align}

Denoting $\overline{s_{11}}(\lambda)=e^{\frac{i}{2}||q(x)||^2_{L^2(\mathbb{R})}}s_{11}(\sqrt{\lambda})$, it has the following asymptotic expansion:
\bee
\ln\overline{s_{11}}(\lambda)=\sum_{j=1}^{\infty}D_k(q)\lambda^{-j},\quad \lambda\rightarrow\infty,
\ene
where $D_k(q)$ are polynomial with respect to $q(x)$ and its derivatives. For example,
\bee
D_1(q)=\frac{i}{4}H_1,\quad D_2(q)=-\frac{i}{8}H_2.
\ene

Furthermore, one can show that $|\overline{s_{11}}(\lambda)|^2\in 1+\mathcal{S}(\mathbb{R})$, and
\bee\no
|\overline{s_{11}}(\lambda)|\geq1,~\lambda<0,\quad |\overline{s_{11}}(\lambda)|\leq1,~\lambda>0.
\ene

The scattering data satisfies the following time evolution equation:
\bee
\frac{\partial s_{11}(\lambda)}{\partial t}=0,\quad \frac{\partial s_{21}(\lambda)}{\partial t}=-4i\lambda^4s_{21}(\lambda)
\ene

Although the assumption $q(x)\in\mathcal{S}(\mathbb{R})$ can be weakened~\cite{ge,ge1,ge2,ge3,Pelinovsky-1}, one needs at least $q(x)\in L^1(\mathbb{R})$ to define the scattering data. A way to overcome this difficulty and to keep a trace of the complete integrability for $H^s$ solutions, for $\lambda\in L^+$, that remains well defined via Eqs.~(\ref{det-1}) for $q(x)\in L^2(\mathbb{R})$~\cite{dnls-Bahouri}.

\section{Global well-posedness}

In this section, we first consider the global well-posedness of the solutions for the Cauchy problems of the DNLS equation (\ref{dnls}):
\begin{equation}\label{dnls1}
\left\{ \begin{array}{l}
iq_{t}+q_{xx}\pm i(|q|^2q)_x=0,\vspace{0.1in}\\
q(x,0)=q_0(x)\in H^s(\mathbb{R}).
\end{array}\right.
\end{equation}

\begin{lemma}\label{le1}\cite{dnls-Bahouri} For any initial data $q_0(x)\in H^{\frac12}(\mathbb{R})$, the Cauchy problem  (\ref{dnls1}) is globally well-posed, and the corresponding solution $q(t)$ satisfies:
\bee
\mathrm{sup}_{t\in\mathbb{R}}||q(t)||_{H^{\frac12}(\mathbb{R})}<+\infty.
\ene
Moreover, if the initial datum is in $H^s(\mathbb{R})$ for some $s>\frac12$, then the $H^s$-norm of the solution of the Cauchy problem (\ref{dnls1}) remains globally bounded in time as well.
\end{lemma}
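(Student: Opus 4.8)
The plan is to obtain Lemma \ref{le1} by combining a subcritical local well-posedness theory in $H^{\frac12}(\mathbb{R})$ with an a priori bound on the $H^{\frac12}$-norm furnished by the complete integrability of (\ref{dnls}), and then to promote these to a global statement via the standard continuation argument. The relevant structural fact is that the scaling $q(x,t)\mapsto z^{\frac12}q(zx,z^2t)$ leaves the $L^2$-norm invariant, so $s=\frac12$ lies strictly above the scaling-critical index $s=0$; local theory is therefore available in principle, the genuine difficulty being the removal of any mass threshold (in particular the restriction $||q_0||_{L^2}<\sqrt{2\pi}$) in the passage to global-in-time control.

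For the local step I would first apply a gauge transformation of Hayashi--Ozawa type, $q\mapsto\exp\!\big(-i\int_{-\infty}^{x}|q(y,t)|^2\,dy\big)\,q$, whose effect is to cancel the worst (resonant) part of the derivative interaction $(|q|^2q)_x$ and recast (\ref{dnls1}) as a semilinear system amenable to contraction. One then solves this gauged system locally in $H^{\frac12}$ by a fixed-point argument in the $U^p$/$V^p$ spaces recalled later in the paper, closing the estimates with bilinear and trilinear Strichartz-type bounds. This produces, for data of size $R=||q_0||_{H^{\frac12}}$, a solution on a time interval $[0,T]$ with $T=T(R)>0$, together with uniqueness and continuous dependence; the same scheme propagates $H^s$-regularity for $s>\frac12$ on the interval of existence.

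The heart of the matter is the a priori bound $\sup_t||q(t)||_{H^{\frac12}}<\infty$ without smallness. Here I would use that the transmission coefficient is invariant along the flow, $\partial_t s_{11}(\lambda)=0$, so that every coefficient in the expansion of $\ln s_{11}(\lambda)$ from part (1) of Theorem \ref{1-th3} is conserved. Following the Koch--Tataru philosophy for (\ref{nls}), the goal is to manufacture from the $\lambda$-dependent quantity $\ln s_{11}(\lambda)$ a conserved functional $E(q)$ comparable to $||q||_{H^{\frac12}}^2$: integrating a suitable weight in $\lambda$ against $\ln s_{11}(\lambda)$ reconstructs at leading order the Fourier multiplier $\langle\xi\rangle$, while the conserved mass $H_0=||q||_{L^2}^2$ controls the low frequencies. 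Proving the two-sided equivalence $E(q)\approx||q||_{H^{\frac12}}^2$, uniformly along the flow, is the crucial and most delicate point, since it requires estimating and summing the higher connected integrals $b_{2j}(\lambda)$ uniformly in $\lambda$; because the problem is $L^2$-critical, these bounds must be scale-invariant and cannot borrow any a priori smallness. This is precisely where the integrable structure, rather than the Hamiltonian $H_1$ alone (which only controls an $L^2$-critical higher-order combination), is indispensable.

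With the uniform $H^{\frac12}$ bound established, the continuation principle extends the local solution to all of $\mathbb{R}$ with $\sup_{t\in\mathbb{R}}||q(t)||_{H^{\frac12}}<\infty$; global uniqueness and continuous dependence at the critical scaling can be secured by a profile-decomposition/weak-continuity argument that upgrades the local stability. For $s>\frac12$ I would then bootstrap: the already-controlled $H^{\frac12}$-norm keeps the local time-step $T$ bounded below along the whole trajectory, so iterating the persistence-of-regularity estimate over $\mathbb{R}$ (equivalently, closing an energy estimate for the $H^s$-norm using the global $H^{\frac12}$ control) yields a global-in-time bound on $||q(t)||_{H^s}$. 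I expect the single genuine obstacle to be the a priori step above, namely extracting an $H^{\frac12}$-equivalent conserved quantity from the scattering data and showing the equivalence survives the flow with no mass restriction.
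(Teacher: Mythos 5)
The first thing to say is that the paper contains no proof of Lemma \ref{le1} at all: it is imported verbatim, with the citation \cite{dnls-Bahouri}, from Bahouri--Perelman, and is used as a black box. So the only meaningful comparison is with the proof in that reference, and measured against it your outline diverges exactly at the step you yourself flag as ``the heart of the matter'', which is a genuine gap rather than a technical remainder. Bahouri--Perelman do \emph{not} manufacture a coercive conserved energy $E(q)\approx\|q\|_{H^{\frac12}}^2$ from $\ln s_{11}(\lambda)$; they obtain the a priori $H^{\frac12}$ bound by a compactness/contradiction argument, combining a profile decomposition of a hypothetical norm-inflating sequence with the conservation of the transmission coefficient, precisely because the Koch--Tataru energy construction does not go through for DNLS at this regularity without smallness.

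Concretely, two obstructions sink the step as you propose it. First, every bound in this circle of ideas on the connected integrals $b_{2j}(\lambda)$ --- compare Theorem \ref{th6} of the present paper, where $|\lambda^{-2j}b_{2j}(\lambda)|\leq 2^{j-1}C_1^{j}$ with $C_1$ controlled by the square of the scale-invariant norm $\|e^{-i\mathrm{Re}\,\lambda^2 x}q\|_{l^2_{\mathrm{Im}\lambda^2}DU^2}$ --- produces a geometric series that is summable only when that $L^2$-critical quantity is small. As you correctly observe, no smallness can be borrowed at the critical scaling; but then the series defining your candidate $E(q)$ is simply not under control for general $H^{\frac12}$ data, and the two-sided equivalence ``uniformly along the flow with no mass restriction'' is exactly the assertion that is not available (for cubic NLS the analogous equivalence constants are tamed by the conserved, \emph{subcritical} mass, a mechanism absent here). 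Second, and worse, for large data $s_{11}(\lambda)$ generically has zeros in $\Sigma_+$ (bound states/solitons of the Kaup--Newell spectral problem), so $\ln s_{11}(\lambda)$ is not even globally defined off the continuous spectrum, and no functional built from it alone can be comparable to $\|q\|_{H^{\frac12}}^2$ for all data. Your local step and the final bootstrap for $s>\frac12$ are standard and fine (local well-posedness at $s=\frac12$ is Takaoka's theorem via the gauge transformation and Fourier restriction spaces; note that the $U^p/V^p$ machinery of Sec.~6 of this paper is deployed to bound iterated scattering integrals, not to run a Cauchy theory), but as written the proposal replaces the one genuinely hard ingredient of \cite{dnls-Bahouri} with a statement that is, in this form, false or at least unproven, so it does not constitute a proof of the lemma.
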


The scattering transform associated to the DNLS equation (\ref{dnls}) is defined via the first equation of (\ref{lax}), which can
be written as a linear system:
\bee \label{lax-x}
\left\{\begin{array}{l}
\dfrac{d\psi_1}{dx}=-i\lambda^2\psi_1+i\lambda q\psi_2, \v\\
\dfrac{d\psi_2}{dx}=i\lambda^2\psi_2+i\lambda q^*\psi_1,
\end{array}\right.
\ene

Then, based on the asymptotic of $q_0(x)$, one can seek for the Jost solutions $\psi_l$ with asymptotics:
\begin{align}\label{jost}
\left\{ \begin{aligned}
&\psi_{l}\sim\left(\!\!\begin{array}{cc} e^{-i\lambda^2x}\vspace{0.05in}\\
0\end{array}\!\!\right),\quad x\rightarrow -\infty\vspace{0.1in}\\
&\psi_{l}\sim\left(\!\!\begin{array}{cc} s_{11}(\lambda)e^{-i\lambda^2x}\vspace{0.05in}\\
s_{21}(\lambda)e^{i\lambda^2x}\end{array}\!\!\right),\quad x\rightarrow +\infty
\end{aligned}\right.
\end{align}
where $s_{11}^{-1}(\lambda)$ is the transmission coefficient and $\dfrac{s_{21}}{s_{11}}(\lambda)$ is the reflection coefficients.

\begin{theorem}\label{th1} The reciprocal of the transmission coefficient, $s_{11}(\lambda)$, has a formal expansion as follows:
\bee
s_{11}(\lambda)=1+\sum\limits_{j=1}^{\infty}s_{2j}(\lambda),
\ene
where $s_{2j}(\lambda)$'s are multi-linear integral forms with homogeneous of degree $2j$ in the potential $q$ and its conjugate $q^*$, that is,
\bee
s_{2j}(\lambda)=(-1)^{j}\int_{x_1<y_1<\cdots<x_j<y_j}\lambda^{2j}\prod_{k=1}^{j}q(y_k)q^*(x_k)e^{2i\lambda^2(y_k-x_k)}dx_1dy_1\cdots dx_jdy_j.
\ene
\end{theorem}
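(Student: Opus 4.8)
The plan is to read the expansion off directly from the spatial half of the Lax pair, written as the linear system (\ref{lax-x}), together with the Jost normalization (\ref{jost}), by converting the ODE into a Volterra integral equation and iterating. First I would strip off the rapidly oscillating diagonal behaviour by setting $\psi_1(x,\lambda)=e^{-i\lambda^2x}m_1(x,\lambda)$ and $\psi_2(x,\lambda)=e^{i\lambda^2x}m_2(x,\lambda)$. Substituting into (\ref{lax-x}) cancels the $\mp i\lambda^2$ diagonal terms and leaves the reduced system
\[
m_1'=i\lambda\,q\,e^{2i\lambda^2x}m_2,\qquad m_2'=i\lambda\,q^*e^{-2i\lambda^2x}m_1,
\]
while the left boundary condition in (\ref{jost}) becomes $m_1\to1$ and $m_2\to0$ as $x\to-\infty$.

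Next I would integrate both equations from $-\infty$ to the current point using these limits, and eliminate $m_2$ to obtain a single closed Volterra equation for $m_1$,
\[
m_1(x)=1-\lambda^2\int_{-\infty}^{x}\int_{-\infty}^{y}q(y)\,q^*(\xi)\,e^{2i\lambda^2(y-\xi)}\,m_1(\xi)\,d\xi\,dy,
\]
where the constant $-\lambda^2$ comes from $(i\lambda)^2$. I would then solve this by Picard iteration, $m_1=\sum_{j\ge0}m_1^{(j)}$ with $m_1^{(0)}\equiv1$ and $m_1^{(j)}$ the result of applying the integral operator to $m_1^{(j-1)}$. Each application introduces one ordered pair of variables $(x_k,y_k)$ with $x_k<y_k$, so that after $j$ steps the $j$-th term is a $2j$-fold integral over the totally ordered simplex $x_1<y_1<x_2<y_2<\cdots<x_j<y_j<x$, carrying the prefactor $(-\lambda^2)^j=(-1)^j\lambda^{2j}$ and the integrand $\prod_{k=1}^{j}q(y_k)q^*(x_k)e^{2i\lambda^2(y_k-x_k)}$.

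Finally I would pass to $x\to+\infty$. By (\ref{jost}) the Jost solution obeys $e^{i\lambda^2x}\psi_1\to s_{11}(\lambda)$, so that $s_{11}(\lambda)=\lim_{x\to\infty}m_1(x)=\sum_{j\ge0}\lim_{x\to\infty}m_1^{(j)}(x)$; in the limit the upper constraint $y_j<x$ disappears and the $j$-th term becomes exactly $s_{2j}(\lambda)$, with the $j=0$ term giving the leading $1$. To license the termwise limit and the rearrangement I would record the standard Volterra estimate: on the ordered simplex the iterated integral of $\prod_k|q(y_k)||q(x_k)|$ equals $\|q\|_{L^1(\mathbb{R})}^{2j}/(2j)!$, whence, since $|e^{2i\lambda^2(y_k-x_k)}|\le1$ for $y_k>x_k$ and $\lambda\in\overline{L_+}$, one has $|s_{2j}(\lambda)|\le(|\lambda|\,\|q\|_{L^1(\mathbb{R})})^{2j}/(2j)!$, which for $q\in\mathcal{S}(\mathbb{R})$ makes the series converge absolutely and uniformly (indeed to an entire function of $\lambda$).

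The only real obstacle is the bookkeeping in the iteration step, namely checking that the successive Volterra compositions assemble precisely the single totally ordered simplex above rather than some larger or overlapping region. This is automatic, because the $(j-1)$-fold block is evaluated at the lower endpoint $x_j$ of the newly introduced pair, which forces all the earlier variables to lie below $x_j$ and thus produces the strict chain $x_1<y_1<\cdots<x_j<y_j$. With this combinatorial identification and the factorial bound in hand, the formal expansion asserted in Theorem \ref{th1} follows.
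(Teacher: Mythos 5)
Your proposal is correct and takes essentially the same route as the paper: Picard--Volterra iteration of the Jost system (\ref{lax-x}) with the normalization (\ref{jost}), building up the ordered simplex $x_1<y_1<\cdots<x_j<y_j$ one pair at a time and reading off $s_{11}(\lambda)$ from the limit $x\to+\infty$. The only differences are presentational and supplementary: you first gauge away the oscillatory factors and close a single Volterra equation for $m_1$ (rather than alternating between the two components as the paper does), and you add the factorial bound $|s_{2j}(\lambda)|\leq (|\lambda|\,\|q\|_{L^1})^{2j}/(2j)!$ justifying absolute convergence and the termwise limit, which the paper omits since it states the expansion only formally.
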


\begin{proof}
We solve system (\ref{lax-x}) by using the iterative method to prove this theorem. Firstly, we choose the initial value iteration function as:
\bee\label{value0}
\psi_{l}^{(0)}(x)=\left(\!\!\begin{array}{cc} e^{-i\lambda^2x}\vspace{0.05in}\\
0\end{array}\!\!\right),
\ene
where the upper right corner represents the number of iterations.

Substituting Eq.~(\ref{value0}) into the second one of Eq.~(\ref{lax-x}) yields
\bee\label{eq1}
\psi_{l2,x}^{(1)}=i\lambda^2\psi_{l2}^{(1)}-\lambda q^*e^{-i\lambda^2x}.
\ene

By solving ordinary differential equation (\ref{eq1}), we have
\bee\label{value1}
\psi_{l}^{(1)}(x)=\left(\!\!\begin{array}{cc} e^{-i\lambda^2x}\vspace{0.05in}\\
-e^{i\lambda^2x}\displaystyle\int_{-\infty}^{x}\lambda q^*(x_1)e^{-2i\lambda^2x_1}dx_1\end{array}\!\!\right),
\ene

Substituting the second component of Eq.~(\ref{value1}) into the first one of system (\ref{lax-x}) yields
\bee\label{eq2}
\psi_{l1,x}^{(2)}=-i\lambda^2\psi_{l1}^{(2)}+\lambda q\psi_{l2}^{(1)},
\ene
and solving ordinary differential equation (\ref{eq2}) has
\bee\label{value1-5}
\psi_{l1,x}^{(2)}=e^{-i\lambda^2x}-\int_{-\infty}^{x}\lambda q(y_1)e^{-i\lambda^2(x-y_1)}\int_{-\infty}^{y_1}\lambda q^*(x_1)e^{i\lambda^2(y_1-2x_1)}dx_1dy_1.
\ene

Simplifying Eq.~(\ref{value1-5}) and using the second component of Eq.~(\ref{value1}) yield
\bee\label{value2}
\psi_{l}^{(2)}(x)=\left(\!\!\begin{array}{cc} e^{-i\lambda^2x}\left(1-\d\int_{x_1<y_1<x}\lambda^2q(y_1)q^*(x_1)e^{2i\lambda^2(y_1-x_1)}dx_1dy_1\right)\vspace{0.1in}\\
-e^{i\lambda^2x}\d\int_{-\infty}^{x}\lambda q^*(x_1)e^{-2i\lambda^2x_1}dx_1\end{array}\!\!\right).
\ene

By repeating the above process, we can obtain the results of the third and fourth iterations as follows.
\bee\label{value3}
\psi_{l}^{(3)}(x)=\left(\!\!\begin{array}{cc} e^{-i\lambda^2x}\left(1-\d\int_{x_1<y_1<x}\lambda^2q(y_1)q^*(x_1)e^{2i\lambda^2(y_1-x_1)}dx_1dy_1\right)\vspace{0.1in}\\
-e^{i\lambda^2x}\d\int_{-\infty}^{x}\lambda q^*(x_2)e^{-2i\lambda^2x_2}\left(1-\int_{x_1<y_1<x_2}\lambda^2q(y_1)q^*(x_1)e^{2i\lambda^2(y_1-x_1)}dx_1dy_1\right)dx_2\end{array}\!\!\right),
\ene

and
\bee\label{value4}
\psi_{l}^{(4)}(x)=\left(\!\!\begin{array}{cc} e^{-i\lambda^2x}\left(1+\int_{-\infty}^{x}\lambda q(y_2)e^{i\lambda^2y_2}\psi_{l2}^{(3)}(y_2)dy_2\right)\vspace{0.1in}\\
-e^{i\lambda^2x}\d\int_{-\infty}^{x}\lambda q^*(x_2)e^{-2i\lambda^2x_2}\left(1-\int_{x_1<y_1<x_2}\lambda^2q(y_1)q^*(x_1)e^{2i\lambda^2(y_1-x_1)}dx_1dy_1\right)dx_2\end{array}\!\!\right).
\ene

Simplifying Eq.~(\ref{value4}) yields
\begin{align}\label{value4-5}
\begin{aligned}
\psi_{l1}^{(4)}(x)=&e^{-i\lambda^2x}\bigg(1-\d\int_{x_2<y_2<x}\lambda^2q(y_2)q^*(x_2)e^{2i\lambda^2(y_2-x_2)}\vspace{0.1in}\\
&\qquad +\d\int_{x_1<y_1<x_2<y_2<x}\lambda^4q(y_1)q^*(x_1)q(y_2)q^*(x_2)e^{2i\lambda^2(y_1+y_2-x_1-x_2)}dx_1dy_1dx_2dy_2\bigg).
\end{aligned}
\end{align}

According to Eqs.~(\ref{lax-x}) and (\ref{jost}), we iterate the above procedure and obtain the following expression
\begin{align}
\begin{aligned}
&s_2(\lambda)=-\int_{x_1<y_1}\lambda^2q(y_1)q^*(x_1)e^{2i\lambda^2(y_1-x_1)}dx_1dy_1,\vspace{0.05in}\\
&s_4(\lambda)=\int_{x_1<y_1<x_2<y_2}\lambda^4q(y_1)q^*(x_1)q(y_2)q^*(x_2)e^{2i\lambda^2(y_1+y_2-x_1-x_2)}dx_1dy_1dx_2dy_2,\vspace{0.05in}\\
&\qquad\qquad\qquad\qquad\qquad\qquad\qquad\vdots\vspace{0.05in}\\
&s_{2j}(\lambda)=(-1)^{j}\int_{x_1<y_1<\cdots<x_j<y_j}\lambda^{2j}\prod_{k=1}^{j}q(y_k)q^*(x_k)e^{2i\lambda^2(y_k-x_k)}dx_1dy_1\cdots dx_jdy_j.
\end{aligned}
\end{align}

Thus the proof is completed.
\end{proof}

We remark that, at least as long as $q(x)\in L^2(\mathbb{R})$, each term $s_{2j}(\lambda)$ is pointwise defined for$\lambda\in L^+$. For convenience, we need the formal series of $\ln s_{11}(\lambda)$ even more. So we will propose the following theorem.

\begin{theorem}\label{th2} The function $\ln s_{11}(\lambda)$ has a formal expansion as follows:
\bee
\ln s_{11}(\lambda)=\sum_{j=1}^{\infty}b_{2j}(\lambda),
\ene
where each component $b_{2j}(\lambda)$ is a linear combination of the following expressions:
\bee \label{bij}
b_{2j}(\lambda)=(-1)^{j}\int_{\Sigma_j}\lambda^{2j}\prod_{k=1}^{j}q(y_k)q^*(x_k)e^{2i\lambda^2(y_k-x_k)}dx_1dy_1\cdots dx_jdy_j,
\ene
where  $\Sigma_j$ is any possible domain which obeys the condition $x_k<y_k$ for all $k$.
\end{theorem}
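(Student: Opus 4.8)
The plan is to start from the formal expansion $s_{11}(\lambda)=1+\sum_{j\ge 1}s_{2j}(\lambda)$ furnished by Theorem \ref{th1} and to feed it into the formal power series of the logarithm. Writing $X=\sum_{j\ge 1}s_{2j}(\lambda)$, which is a formal series in the potential with no constant term (its lowest piece $s_2$ is already of degree $2$), I would use
\be
\ln s_{11}(\lambda)=\ln(1+X)=\sum_{n=1}^{\infty}\frac{(-1)^{n-1}}{n}X^{n}
=\sum_{n=1}^{\infty}\frac{(-1)^{n-1}}{n}\sum_{j_1,\dots,j_n\ge 1}s_{2j_1}(\lambda)\cdots s_{2j_n}(\lambda).
\ee
Since each $s_{2j}$ is homogeneous of degree $2j$ in $(q,q^*)$, the product $s_{2j_1}\cdots s_{2j_n}$ is homogeneous of degree $2(j_1+\cdots+j_n)$, so all contributions of a fixed total degree $2j$ come from the finitely many compositions $j_1+\cdots+j_n=j$ with $n\le j$. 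This already isolates a well-defined degree-$2j$ piece, which will become $b_{2j}(\lambda)$.

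The crux is to rewrite a product of iterated integrals as a single iterated integral. Each factor $s_{2j_k}(\lambda)$ is, by Theorem \ref{th1}, an integral in its own $2j_k$ variables over the simplex $x_1^{(k)}<y_1^{(k)}<\cdots<x_{j_k}^{(k)}<y_{j_k}^{(k)}$ with integrand $(-1)^{j_k}\lambda^{2j_k}\prod_{m}q(y_m^{(k)})q^*(x_m^{(k)})e^{2i\lambda^2(y_m^{(k)}-x_m^{(k)})}$. Because distinct factors carry disjoint sets of integration variables, the product equals the integral over the Cartesian product of these simplices, the integrand being the product of the individual integrands. After relabelling the $2j$ variables as $x_1,y_1,\dots,x_j,y_j$, this is exactly an expression of the form
\be
(-1)^{j}\int_{\Sigma_j}\lambda^{2j}\prod_{k=1}^{j}q(y_k)q^*(x_k)e^{2i\lambda^2(y_k-x_k)}\,dx_1dy_1\cdots dx_jdy_j,
\ee
where $\Sigma_j$ is the product-of-simplices domain. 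Within each pair $(x_k,y_k)$ the ordering $x_k<y_k$ is inherited from its parent factor (indeed pairs inside one factor remain nested), while pairs coming from different factors are left unconstrained relative to one another; in particular $\Sigma_j$ is contained in $\{x_k<y_k\text{ for all }k\}$, so it is of the admissible form required by the statement, and the sign $(-1)^{j}=(-1)^{j_1+\cdots+j_n}$ factors out uniformly.

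Collecting all compositions of $j$ with their numerical weights $(-1)^{n-1}/n$ then exhibits the degree-$2j$ part of $\ln s_{11}(\lambda)$ as a finite rational linear combination of integrals over such domains $\Sigma_j$, each obeying $x_k<y_k$ for all $k$; factoring out the common $(-1)^{j}\lambda^{2j}$ and the common integrand yields $b_{2j}(\lambda)$ in the form (\ref{bij}). I expect the main obstacle to be purely combinatorial bookkeeping: verifying carefully that the product of simplex integrals really reassembles into a single integral over $\mathbb{R}^{2j}$ with the pairwise constraint intact and with no spurious ordering imposed between pairs from different factors. All of this is carried out at the level of formal power series in the potential, so I would postpone questions of convergence and of interchanging summation with integration, which are the $H^s$-analytic issues treated later in Sections 5--8. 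Likewise, the sharper identification of exactly which domains survive and of the genuinely \emph{connected} character of $b_{2j}(\lambda)$ is naturally organised through the Hopf-algebra formalism announced for Section 4; here I would invoke only the weaker ``linear combination'' statement that the theorem actually asserts.
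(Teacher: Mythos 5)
Your proposal is correct and follows essentially the same route as the paper: expand $\ln(1+\sum_{j\ge1}s_{2j}(\lambda))$ via the formal Taylor series of the logarithm and collect the finitely many contributions of each homogeneity degree $2j$ into $b_{2j}(\lambda)$, exactly as in the paper's proof of Theorem \ref{th2}. Your extra care in justifying, via Fubini, that a product of simplex integrals is a single integral over a product-of-simplices domain satisfying $x_k<y_k$ for all $k$ is a point the paper glosses over here and only makes explicit later through the shuffle identities of Section 4 (e.g.\ $(XY)^2=2XYXY+4XXYY$), so it is a welcome supplement rather than a departure.
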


\begin{proof}
Expanding the Taylor expansion of $\ln s_{11}(\lambda)$, we have
\begin{align}
\begin{aligned}
\ln s_{11}(\lambda)=\ln\left(1+\sum\limits_{j=1}^{\infty}s_{2j}(\lambda)\right)
=\sum_{k=1}^{\infty}(-1)^{k-1}k^{-1}\left(\sum\limits_{j=1}^{\infty}s_{2j}(\lambda)\right)^k.
\end{aligned}
\end{align}
Based on the sum of subscripts, we have
\begin{align}
\left\{\begin{aligned}
&b_2(\lambda)=s_2(\lambda),\vspace{0.05in}\\
&b_4(\lambda)=s_4(\lambda)-\frac{s_2(\lambda)^2}{2},\vspace{0.05in}\\
&b_6(\lambda)=s_6(\lambda)-s_2(\lambda)s_4(\lambda)+\frac{s_2(\lambda)^3}{3},\vspace{0.05in}\\
&b_8(\lambda)=s_8(\lambda)-s_2(\lambda)s_6(\lambda)-\frac{s_4(\lambda)^2}{2}+s_2(\lambda)^2s_4(\lambda)-\frac{s_2(\lambda)^4}{4},\vspace{0.05in}\\
&b_{10}(\lambda)=s_{10}(\lambda)-s_2(\lambda)s_8(\lambda)-s_4(\lambda)s_6(\lambda)+s_2(\lambda)^2s_6(\lambda)+s_2(\lambda)s_4(\lambda)^2
-s_2(\lambda)^3s_4(\lambda)+\frac{s_2(\lambda)^5}{5},\vspace{0.05in}\\
&\qquad\qquad\qquad\qquad\qquad\qquad\cdots\vspace{0.05in}\\
\end{aligned}\right.
\end{align}
Thus the proof is completed.
\end{proof}

\section{Hopf algebra}

The goal of this section is to construct iterative integrals given by Eq.~(\ref{bij}) into a Hopf algebra. Our iterative integrals are in the following form:
\bee\label{integral}
\int_{\Sigma_j}\prod_{k=1}^{j}q(y_k)q^*(x_k)e^{2i\lambda^2(y_k-x_k)}dx_1dy_1\cdots dx_jdy_j,
\ene
where $\Sigma_j$ is an appropriate domain which obeys $x_k<y_k$ for all $k$.

Omitting the indices, we use $X$ to represent $x_j$ and use $Y$ to represent $y_j$. For example,
\begin{align}\no
\begin{aligned}
&x_1<y_1\longrightarrow XY,\vspace{0.05in}\\
&x_1<y_1<x_2<y_2\longrightarrow XYXY,\vspace{0.05in}\\
&x_1<x_2<x_3<y_1<y_2<y_3\longrightarrow XXXYYY.
\end{aligned}
\end{align}

In what follows, we only consider the situation where the quantities of $X$ and $Y$ are the same and $x_l, y_l$ satisfy constraint conditions: $x_l<y_l$. Then, we can use letters $X, Y$ to simply represent iterative integrals. For instance,
\begin{align}\label{represent}
\begin{aligned}
&XY:=\int_{x_1<y_1}q(y_1)q^*(x_1)e^{2i\lambda^2(y_1-x_1)}dx_1dy_1,\vspace{0.05in}\\
&XYXY:=\int_{x_1<y_1<x_2<y_2}q(y_1)q^*(x_1)q(y_2)q^*(x_2)e^{2i\lambda^2(y_1+y_2-x_1-x_2)}dx_1dy_1dx_2dy_2,\vspace{0.05in}\\
&(XY)^{(j)}:=\int_{x_1<y_1<\cdots<x_j<y_j}\prod_{k=1}^{j}q(y_k)q^*(x_k)e^{2i\lambda^2(y_k-x_k)}dx_1dy_1\cdots dx_jdy_j,\vspace{0.05in}\\
&XXYXYY:=\int_{x_1<x_2<y_1<x_3<y_2<y_3}\prod_{j=1}^3q(y_j)q^*(x_j)e^{2i\lambda^2(y_1+y_2+y_3-x_1-x_2-x_3)}dx_1dy_1dx_2dy_2dx_3dy_3,\vspace{0.05in}\\
&XXXYYY:=\int_{x_1<x_2<x_3<y_1<y_2<y_3}\prod_{j=1}^3q(y_j)q^*(x_j)e^{2i\lambda^2(y_1+y_2+y_3-x_1-x_2-x_3)}dx_1dy_1dx_2dy_2dx_3dy_3.
\end{aligned}
\end{align}

According to Eq.~(\ref{represent}), $s_{11}(\lambda)$ can be expressed in the following form:
\bee
s_{11}(\lambda)=1+\sum\limits_{j=1}^{\infty}(-1)^j\lambda^{2j}(XY)^{(j)}.
\ene

And $b_{2j}(\lambda)$ in $\ln s_{11}(\lambda)$ can be expressed in the following form:
\begin{align}\label{XY-b}
\left\{\begin{aligned}
&b_2(\lambda)=-\lambda^2XY,\vspace{0.05in}\\
&b_4(\lambda)=\lambda^4(XY)^{(2)}-\frac{\lambda^4(XY)^2}{2},\vspace{0.05in}\\
&b_6(\lambda)=-\lambda^6(XY)^{(3)}+\lambda^6XY\times(XY)^{(2)}-\lambda^6\frac{(XY)^3}{3},\vspace{0.05in}\\
&b_8(\lambda)=\lambda^8(XY)^{(4)}-\lambda^8XY\times(XY)^{(3)}-\frac{\lambda^8((XY)^{(2)})^2}{2}+\lambda^8(XY)^2\times(XY)^{(2)}-\lambda^8\frac{(XY)^4}{4},\vspace{0.05in}\\
&\qquad\qquad\qquad\qquad\qquad\qquad\cdots\vspace{0.05in}\\
\end{aligned}\right.
\end{align}

Next we provide the pairing principle for $X$ and $Y$. Starting from left to right, each $X$ pairs with its nearest $Y$ (see Fig.~\ref{fig1}).

\begin{figure}[!t]
    \centering
\vspace{-0.05in}
  {\scalebox{0.35}[0.35]{\includegraphics{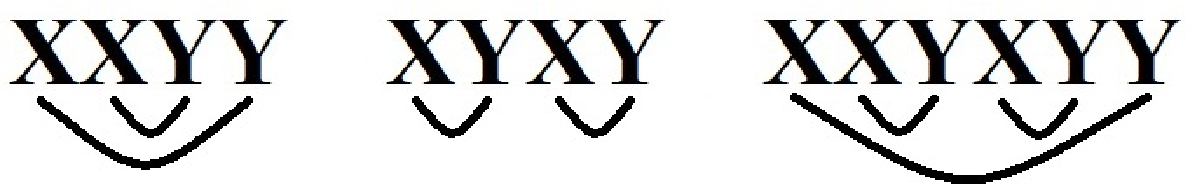}}}\hspace{-0.35in}
\caption{The pairing principle for $X$ and $Y$.}
   \label{fig1}
\end{figure}

We call an integral (\ref{integral}) connected if its first $X$ is paired with its last $Y$. Now, we want to prove that $\ln s_{11}(\lambda)$ is composed of connected integrals. According to Fubini's theorem, we have
\begin{align}\no
\begin{aligned}
\left(\int_{x_1<y_1}f(x_1)g(y_1)dx_1dy_1\right)^2=&\,\, 2\int_{x_1<y_1<x_2<y_2}f(x_1)g(y_1)f(x_2)g(y_2)dx_1dy_1dx_2dy_2\vspace{0.1in}\\
&+4\int_{x_1<x_2<y_1<y_2}f(x_1)g(y_1)f(x_2)g(y_2)dx_1dy_1dx_2dy_2.
\end{aligned}
\end{align}

Represented by letters $X, Y$, the above equation is written as:
\bee\no
(XY)^2=2XYXY+4XXYY.
\ene

\begin{theorem}\label{th3}
$\ln s_{11}(\lambda)$ has a formal expansion as follows:
\bee
\ln s_{11}(\lambda)=-\lambda^2XY-2\lambda^4XXYY-4\lambda^6(XXYXYY+3XXXYYY)+\cdots,
\ene
\end{theorem}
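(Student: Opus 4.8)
The plan is to prove Theorem \ref{th3} by a direct order-by-order computation: I will convert the combinations $b_{2j}(\lambda)$ from Theorem \ref{th2}, as rewritten in the word notation of (\ref{XY-b}), into sums of Dyck-word integrals and then check that every \emph{disconnected} word cancels, leaving exactly the connected integrals displayed in the statement.

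First I would isolate the single tool that drives everything: a Fubini/shuffle identity expressing a product of ordered iterated integrals as a nonnegative-integer combination of the word integrals of (\ref{represent}). Two structural facts make this clean. Since the integrand $\prod_k q(y_k)q^*(x_k)e^{2i\lambda^2(y_k-x_k)}$ is invariant under relabeling the pairs, the value of such an integral depends only on its $X/Y$ pattern; and since every $y_k$ must lie to the right of its own $x_k$, each pattern that arises starts with $X$, ends with $Y$, and has at least as many $X$'s as $Y$'s in every prefix, i.e.\ it is a Dyck word. Beyond the identity already displayed, $(XY)^2=2\,XYXY+4\,XXYY$, I need the two expansions at semilength three. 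Each follows by counting, for a fixed Dyck word $w$, the number of label assignments of $x_1,\dots,x_j$ and $y_1,\dots,y_j$ to the $X$- and $Y$-slots of $w$ that respect $x_k<y_k$; this yields
\begin{align}
(XY)^3 &= 36\,XXXYYY+24\,XXYXYY+12\,XXYYXY+12\,XYXXYY+6\,XYXYXY,\\
XY\cdot(XY)^{(2)} &= 4\,XXYXYY+4\,XXYYXY+4\,XYXXYY+3\,XYXYXY,
\end{align}
together with the tautologies $(XY)^{(2)}=XYXY$ and $(XY)^{(3)}=XYXYXY$.

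I then substitute into (\ref{XY-b}). Order two is immediate: $b_2=-\lambda^2XY$ is already connected. At order four, $b_4=\lambda^4\bigl[(XY)^{(2)}-\tfrac12(XY)^2\bigr]=\lambda^4\bigl[XYXY-\tfrac12(2\,XYXY+4\,XXYY)\bigr]=-2\lambda^4\,XXYY$, so the disconnected word $XYXY$ drops out. At order six, writing $b_6=-\lambda^6\bigl[(XY)^{(3)}-XY\cdot(XY)^{(2)}+\tfrac13(XY)^3\bigr]$ and collecting coefficients word by word, the three disconnected words cancel exactly, via $XYXYXY:\,1-3+2=0$, $XXYYXY:\,0-4+4=0$, and $XYXXYY:\,0-4+4=0$, while the connected words survive as $XXYXYY:\,0-4+8=4$ and $XXXYYY:\,0-0+12=12$. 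Hence $b_6=-4\lambda^6(XXYXYY+3\,XXXYYY)$, precisely as claimed, and assembling $b_2+b_4+b_6$ gives the stated expansion through order six.

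The main obstacle is the combinatorial bookkeeping of the shuffle coefficients and, if one insists on the full ``$+\cdots$'' to all orders rather than the three displayed terms, proving in general that only connected words survive. The order-six cancellation is not accidental: it is the iterated-integral form of the moment--cumulant (exp--log) relation, in which the logarithm of the generating series of \emph{all} Dyck-word integrals retains exactly the \emph{connected} (prime) ones. I would therefore state the product $=$ shuffle identity as a standalone lemma, use it for the explicit low-order verification above, and defer the all-orders statement to the Hopf-algebra structure developed in the remainder of Section 4 (the coproduct under which connected integrals are primitive), so that the present theorem appears as the first three instances of that general principle.
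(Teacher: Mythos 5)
Your proposal is correct and follows essentially the same route as the paper: both arguments expand $b_4(\lambda)$ and $b_6(\lambda)$ word-by-word via Fubini/shuffle identities for products of word integrals and verify that the disconnected words cancel, and your shuffle coefficients agree with the paper's (e.g.\ $XY\times(XY)^{(2)}=3(XY)^{(3)}+4\,XXYYXY+4\,XXYXYY+4\,XYXXYY$, and your $(XY)^3$ expansion is equivalent to the paper's factored computation through $(XY)^2\times XY$ and $XXYY\times XY$). The only cosmetic difference is that the paper evaluates the binary products by a letter-insertion (dotted-letter) device while you count order-respecting matchings directly, and you defer the all-orders connectedness statement to the Hopf-algebra coproduct argument exactly as the paper itself does in the remainder of Section 4.
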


\begin{proof}
According to Eq.~(\ref{XY-b}), we will calculate each item separately. Firstly, we have
\begin{align}
\begin{aligned}
b_4(\lambda)&=\lambda^4(XY)^{(2)}-\frac{\lambda^4(XY)^2}{2}\vspace{0.05in}\\
&=\lambda^4\left((XY)^{(2)}-\frac{2XYXY+4XXYY}{2}\right)\vspace{0.05in}\\
&=-2\lambda^4XXYY.
\end{aligned}
\end{align}

Expanding $b_6(\lambda)$ yields
\begin{align}\label{th3-1}
\begin{aligned}
b_6(\lambda)&=-\lambda^6(XY)^{(3)}+\lambda^6XY\times(XY)^{(2)}-\lambda^6\frac{(XY)^3}{3}\vspace{0.05in}\\
&=-\lambda^6\left((XY)^{(3)}-XY\times(XY)^{(2)}+\frac{(2XYXY+4XXYY)\times XY}{3}\right)\vspace{0.05in}\\
&=-\lambda^6\left((XY)^{(3)}-\frac{XY\times(XY)^{(2)}}{3}+\frac{4XXYY\times XY}{3}\right).
\end{aligned}
\end{align}

Since
\begin{align}\label{th3-2}
\begin{aligned}
XY\times(XY)^{(2)}=&X\times\left(\dot{Y}XYXY+X\dot{Y}YXY+XY\dot{Y}XY+XYX\dot{Y}Y+XYXY\dot{Y}\right)\vspace{0.05in}\\
=&XYXYXY+2XXYYXY+2XXYYXY+XYXYXY+2XXYXYY\vspace{0.05in}\\
&+2XYXXYY+2XXYXYY+2XYXXYY+XYXYXY\vspace{0.05in}\\
=&3(XY)^{(3)}+4XXYYXY+4XXYXYY+4XYXXYY,
\end{aligned}
\end{align}
and
\begin{align}\label{th3-3}
\begin{aligned}
XXYY\times XY=&\left(XXYY\dot{X}+XXY\dot{X}Y+XX\dot{X}YY+X\dot{X}XYY+\dot{X}XXYY\right)\times Y\vspace{0.05in}\\
=&XXYYXY+2XXYXYY+3XXXYYY+3XXXYYY+XXYXYY\vspace{0.05in}\\
&+3XXXYYY+XXYXYY+XYXXYY\vspace{0.05in}\\
=&XXYYXY+4XXYXYY+9XXXYYY+XYXXYY.
\end{aligned}
\end{align}

According to Eqs.~(\ref{th3-1})-(\ref{th3-3}), we have
\bee\no
b_6(\lambda)=-4\lambda^6(XXYXYY+3XXXYYY).
\ene

Thus the proof is completed.
\end{proof}

We need to construct a Hopf algebra. Let $H$ be a graded algebra, and there are the following operations on $H$:
\begin{align}\no
\begin{aligned}
& \times: natural~multiplication,\quad  \otimes: tensor~product,\quad
 \Delta: H\rightarrow H\times H,
\end{aligned}
\end{align}
where
\bee\no
\Delta a=\sum\limits_{a_1a_2=a}a_1\otimes a_2.
\ene

And we call a word $a\in H$ group-like if
\bee\no
\Delta a=a\otimes a.
\ene

Then, we know that the set $G$ of all group-like words with natural multiplication is a group. The primitive words are
\bee\no
P=\{p\in H| \Delta p=1\otimes p+p\otimes 1\}.
\ene

We note that the primitive words are linear combinations of connected integrals. There is a relationship between group $G$ and $P$ as follows:
\bee
G=e^{P}.
\ene

And we get the following lemma.
\begin{lemma}\label{le2}
The expression
\bee
s_{11}(\lambda)=1+\sum\limits_{j=1}^{\infty}(-1)^j\lambda^{2j}(XY)^{(j)}.
\ene
belongs to $G$.
\end{lemma}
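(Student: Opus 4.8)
The plan is to verify the defining property of $G$ directly, namely $\Delta s_{11}(\lambda) = s_{11}(\lambda)\otimes s_{11}(\lambda)$, by exploiting the very special support of this series. By Theorem \ref{th1} (in the $X,Y$ notation) the only words carrying a nonzero coefficient in $s_{11}$ are the totally ordered words $(XY)^{(j)}$, and the coefficient of $(XY)^{(j)}$ is $(-1)^{j}\lambda^{2j}$, which is multiplicative in $j$. Writing $c_j:=(-1)^{j}\lambda^{2j}$ and $w_j:=(XY)^{(j)}$ (with $w_0=1$, $c_0=1$), we have $s_{11}=\sum_{j\geq 0}c_j w_j$ together with the key arithmetic identity $c_{a+b}=c_a c_b$.

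First I would compute the coproduct of a single generator $w_j$. Since $H$ is built only from balanced words (equal numbers of $X$ and $Y$, each $X$ preceding its paired $Y$), the coproduct $\Delta a=\sum_{a_1a_2=a}a_1\otimes a_2$ must be read as deconcatenation restricted to those cuts for which \emph{both} the prefix $a_1$ and the suffix $a_2$ are again balanced words of $H$. Reading $w_j=\underbrace{XYXY\cdots XY}_{j}$ from left to right, a prefix is balanced exactly when the cut falls at a block boundary, i.e.\ at an even position; every such balanced prefix equals $w_a=(XY)^{(a)}$ for some $0\leq a\leq j$, with complementary suffix $w_{j-a}$. Equivalently, $XY$ is a primitive word ($\Delta(XY)=1\otimes XY+XY\otimes 1$, there being no admissible interior cut) and $w_j$ is its $j$-fold concatenation power. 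Hence
\[
\Delta w_j=\sum_{a=0}^{j}w_a\otimes w_{j-a}.
\]

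It then remains to assemble the full coproduct and factor it. Using linearity of $\Delta$, the coproduct formula above, the reindexing $j=a+b$, and the multiplicativity $c_{a+b}=c_ac_b$,
\begin{align*}
\Delta s_{11}&=\sum_{j\geq 0}c_j\sum_{a=0}^{j}w_a\otimes w_{j-a}
=\sum_{a,b\geq 0}c_{a+b}\,w_a\otimes w_b \\
&=\sum_{a,b\geq 0}c_ac_b\,w_a\otimes w_b
=\Big(\sum_{a}c_aw_a\Big)\otimes\Big(\sum_{b}c_bw_b\Big)=s_{11}\otimes s_{11}.
\end{align*}
This is exactly $\Delta s_{11}=s_{11}\otimes s_{11}$, so $s_{11}\in G$.

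The one genuinely load-bearing step is the coproduct identity $\Delta w_j=\sum_{a=0}^{j}w_a\otimes w_{j-a}$, together with the insistence that only balanced prefix/suffix pairs survive. If one used unrestricted deconcatenation, $\Delta w_j$ would also contain unbalanced cross terms such as $(XYX)\otimes(Y\cdots)$, which have no counterpart in $s_{11}\otimes s_{11}$ (that tensor involves only balanced $\otimes$ balanced terms), and group-likeness would fail. The essential content is therefore the combinatorial observation that a totally ordered word $XYXY\cdots XY$ admits no nontrivial interior cut producing two balanced pieces other than at the $XY$-block boundaries; everything else is formal bookkeeping enabled by the multiplicativity of the scalar coefficients $(-1)^{j}\lambda^{2j}$. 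I would accordingly devote the proof to making this combinatorial fact precise, after which the factorization into $s_{11}\otimes s_{11}$ is immediate.
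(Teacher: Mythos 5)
Your proof is correct and follows essentially the same route as the paper: both verify group-likeness by applying $\Delta$ termwise, using the deconcatenation identity $\Delta (XY)^{(j)}=\sum_{k=0}^{j}(XY)^{(k)}\otimes (XY)^{(j-k)}$ and then factoring via the multiplicativity of the scalar coefficients $(-1)^{j}\lambda^{2j}$. The only difference is that you explicitly justify the coproduct formula (only block-boundary cuts yield two balanced words in $H$), a point the paper leaves implicit in its definition $\Delta a=\sum_{a_1a_2=a}a_1\otimes a_2$, so your write-up is if anything slightly more complete.
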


\begin{proof}
For the sake of simplicity, let $(XY)^{(0)}=1$. Then
\begin{align}\no
\begin{aligned}
\Delta s_{11}(\lambda)&=\Delta\sum\limits_{j=0}^{\infty}(-1)^j\lambda^{2j}(XY)^{(j)}\vspace{0.05in}\\
&=\sum\limits_{j=0}^{\infty}\Delta(-1)^j\lambda^{2j}(XY)^{(j)}\vspace{0.05in}\\
&=\sum\limits_{j=0}^{\infty}\sum\limits_{k=0}^{n}(-1)^k\lambda^{2k}(XY)^{(k)}\otimes(-1)^{j-k}\lambda^{2(j-k)}(XY)^{(j-k)}\vspace{0.05in}\\
&=s_{11}(\lambda)\otimes s_{11}(\lambda).
\end{aligned}
\end{align}

Thus the proof is completed.
\end{proof}

So $b_{2j}(\lambda)'s$ are formal linear combinations of connected integrals. Then, the proof of the first conclusion of Theorem \ref{1-th3} is completed.

\section{Bounding the integral term $s_{2}(\lambda)$}

The leading term in both $s_{11}(\lambda)-1$ and $\ln s_{11}(\lambda)$ away from $\Sigma:=\{\lambda|\lambda^2\in\mathbb{R}\}$ is $s_2(\lambda)$. Thus we here analyze the term $s_{2}(\lambda)$.

Firstly, we know
\bee
s_2(\lambda)=-\int_{x<y}\lambda^2q(y)q^*(x)e^{2i\lambda^2(y-x)}dxdy.
\ene

For convenience, we choose the unitary Fourier transform
\bee\label{Fourier1}
\hat{q}(\xi)=\frac{1}{\sqrt{2\pi}}\int_{\mathbb{R}}q(x)e^{-ix\xi}dx,
\ene

and the corresponding Fourier inversion formula as follows:
\bee\label{Fourier2}
\check{q}(x)=\frac{1}{\sqrt{2\pi}}\int_{\mathbb{R}}q(\xi)e^{ix\xi}dx.
\ene

Let's note that $\delta(x)$ is a Dirac delta function and has properties:
\bee\label{Fourier3}
\int_{-\infty}^{+\infty}e^{i\omega(\xi-\eta)}d\omega=2\pi\delta(\xi-\eta).
\ene

According to Eq.~(\ref{Fourier1}), (\ref{Fourier2}) and (\ref{Fourier3}), we have
\begin{align}\label{Fourier4}
\begin{aligned}
s_2(\lambda)=&-\int_{x<y}\lambda^2q(y)q^*(x)e^{2i\lambda^2(y-x)}dxdy\vspace{0.05in}\\
=&-\frac{1}{2\pi}\int_{x<y}\int_{-\infty}^{+\infty}\int_{-\infty}^{+\infty}\lambda^2e^{2i\lambda^2(y-x)}\hat{q}(\xi)e^{iy\xi}\hat{q}^*(\eta)e^{-ix\eta}d\xi d\eta dxdy\vspace{0.05in}\\
=&-\frac{1}{2\pi}\int_{-\infty}^{+\infty}\int_{-\infty}^{+\infty}\int_{-\infty}^{+\infty}\lambda^2\left\{\int_{-\infty}^{y}e^{iy(2\lambda^2+\xi)-ix(2\lambda^2+\eta)}dx\right\}
\hat{q}(\xi)\hat{q}^*(\eta)d\xi d\eta dy\vspace{0.05in}\\
=&~\frac{1}{2\pi}\int_{-\infty}^{+\infty}\int_{-\infty}^{+\infty}\int_{-\infty}^{+\infty}\frac{\lambda^2}{2i\lambda^2+i\eta}e^{iy(\xi-\eta)}
\hat{q}(\xi)\hat{q}^*(\eta)d\xi d\eta dy\vspace{0.05in}\\
=&-\frac{i}{2\pi}\int_{-\infty}^{+\infty}\int_{-\infty}^{+\infty}\frac{\lambda^2}{2\lambda^2+\eta}\left\{\int_{-\infty}^{+\infty}e^{iy(\xi-\eta)}dy\right\}
\hat{q}(\xi)\hat{q}^*(\eta)d\xi d\eta \vspace{0.05in}\\
=&-i\int_{-\infty}^{+\infty}\int_{-\infty}^{+\infty}\frac{\lambda^2}{2\lambda^2+\eta}\delta(\xi-\eta)\hat{q}(\xi)\hat{q}^*(\eta)d\xi d\eta\vspace{0.05in}\\
=&-i\int_{-\infty}^{+\infty}\frac{\lambda^2}{2\lambda^2+\xi}|\hat{q}(\xi)|^2d\xi.
\end{aligned}
\end{align}

Simplifying the last equation of Eq.~(\ref{Fourier4}), we obtain
\bee
s_2(\lambda)=-\frac{i}{2}||q(x)||^2_{L^2}+i\int_{-\infty}^{+\infty}\frac{\xi}{4\lambda^2+2\xi}|\hat{q}(\xi)|^2d\xi.
\ene

For convenience, we introduce a new variable as follows:
\begin{align}
\left\{\begin{aligned}
&c_2(\lambda):=s_2(\lambda)+\frac{i}{2}||q(x)||^2_{L^2}=i\int_{-\infty}^{+\infty}\frac{\xi}{4\lambda^2+2\xi}|\hat{q}(\xi)|^2d\xi,\vspace{0.05in}\\
&c_{2j}(\lambda):=b_{2j}(\lambda),\quad j\geq2.
\end{aligned}\right.
\end{align}

Obviously, we have
\bee\no
\ln s_{11}(\lambda)+\frac{i}{2}||q(x)||^2_{L^2}=\sum_{j=1}^{\infty}c_{2j}(\lambda).
\ene

Through the above analysis, we obtain the following theorem.

\begin{theorem}\label{th5}
(a) For $\lambda\in\Sigma_+:=\{\lambda|{\rm Im}(\lambda^2)>0\}$, we have
\bee
|\mathrm{Re}~c_2(\lambda)|\leq\int_{-\infty}^{+\infty}\dfrac{|\xi|~\mathrm{Im}(\lambda^2)}
{(2\mathrm{Re}\lambda^2+\xi)^2+(2\mathrm{Im}(\lambda^2))^2}
|\hat{q}(\xi)|^2d\xi.
\ene

(b) For all $N\in\mathbb{N}$, we have
\bee
|\mathrm{Re}~(c_2(\frac{\lambda}{\sqrt{2}})-\frac{i}{2}\sum\limits_{k=0}^{N-1}M_{k,2}\lambda^{-2k-2})|\leq\frac{|\lambda|^{-2N}}{2}\int_{-\infty}^{+\infty}|\xi|^{N+1}
\dfrac{\mathrm{Im}\lambda^2+|\mathrm{Re}\lambda^2+\xi|}{(\mathrm{Re}\lambda^2+\xi)^2+(\mathrm{Im}\lambda^2)^2}|\hat{q}(\xi)|^2d\xi.
\ene
where
\begin{align}\no
\begin{aligned}
M_{k,2}&=\int_{-\infty}^{+\infty}(-1)^k\xi^{k+1}|\hat{q}(\xi)|^2d\xi\vspace{0.05in}\\
&=\begin{cases}
-\int_{-\infty}^{+\infty}|q^{(k)}|^2dx,\quad j=2k-1,\vspace{0.03in}\\
-\mathrm{Im}~\int_{-\infty}^{+\infty}q^{(k+1)}q^{*(k)}dx,\quad j=2k.
\end{cases}
\end{aligned}
\end{align}
\end{theorem}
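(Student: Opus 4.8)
The plan is to read off both estimates directly from the closed Fourier representation of $c_2$ obtained just above, namely $c_2(\lambda)=i\int_{\mathbb{R}}\frac{\xi}{4\lambda^2+2\xi}|\hat q(\xi)|^2\,d\xi$, by separating real and imaginary parts of the kernel and then estimating crudely. For part (a) I would write $\lambda^2=\mathrm{Re}\,\lambda^2+i\,\mathrm{Im}\,\lambda^2$, factor $4\lambda^2+2\xi=2\big[(2\mathrm{Re}\,\lambda^2+\xi)+2i\,\mathrm{Im}\,\lambda^2\big]$, and rationalize. Since multiplying by the prefactor $i$ sends $u+iv\mapsto -v+iu$, the real part of $c_2$ equals minus the imaginary part of $\frac{\xi}{4\lambda^2+2\xi}$ integrated against $|\hat q|^2$, which a one-line computation turns into
\[
\mathrm{Re}\,c_2(\lambda)=\int_{\mathbb{R}}\frac{\xi\,\mathrm{Im}(\lambda^2)}{(2\mathrm{Re}\,\lambda^2+\xi)^2+(2\mathrm{Im}(\lambda^2))^2}\,|\hat q(\xi)|^2\,d\xi .
\]
Replacing $\xi$ by $|\xi|$ in the numerator and passing the absolute value inside the integral yields part (a) immediately; this step is pure bookkeeping with real and imaginary parts.

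For part (b) I would first perform the rescaling $\lambda\mapsto\lambda/\sqrt2$, under which the kernel becomes $\frac{\xi}{2(\lambda^2+\xi)}$, so that $c_2(\lambda/\sqrt2)=\frac{i}{2}\int_{\mathbb{R}}\frac{\xi}{\lambda^2+\xi}\,|\hat q(\xi)|^2\,d\xi$. The key algebraic input is the exact finite geometric identity
\[
\frac{\xi}{\lambda^2+\xi}=\sum_{k=0}^{N-1}(-1)^k\xi^{k+1}\lambda^{-2k-2}+\frac{(-1)^N\xi^{N+1}\lambda^{-2N}}{\lambda^2+\xi},
\]
which holds with no convergence hypothesis because it is a polynomial division carrying an explicit remainder. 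Recognizing $M_{k,2}=\int_{\mathbb{R}}(-1)^k\xi^{k+1}|\hat q(\xi)|^2\,d\xi$, the finite sum integrates exactly to $\frac{i}{2}\sum_{k=0}^{N-1}M_{k,2}\lambda^{-2k-2}$, so the quantity on the left of (b) reduces to $\frac{i}{2}(-1)^N\lambda^{-2N}\int_{\mathbb{R}}\frac{\xi^{N+1}}{\lambda^2+\xi}\,|\hat q(\xi)|^2\,d\xi$.

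To close the argument I would estimate $|\mathrm{Re}\,z|\le|z|$, extract $|\lambda^{-2N}|=|\lambda|^{-2N}$, and apply the triangle inequality inside the integral to produce the factor $\frac{|\xi|^{N+1}}{|\lambda^2+\xi|}$. The only further ingredient is the elementary bound $|\lambda^2+\xi|\le|\mathrm{Re}\,\lambda^2+\xi|+\mathrm{Im}\,\lambda^2$ for $\mathrm{Im}\,\lambda^2\ge0$, a consequence of $(|a|+b)^2\ge a^2+b^2$; dividing by $|\lambda^2+\xi|^2=(\mathrm{Re}\,\lambda^2+\xi)^2+(\mathrm{Im}\,\lambda^2)^2$ upgrades $\frac{1}{|\lambda^2+\xi|}$ to the kernel $\frac{\mathrm{Im}\,\lambda^2+|\mathrm{Re}\,\lambda^2+\xi|}{(\mathrm{Re}\,\lambda^2+\xi)^2+(\mathrm{Im}\,\lambda^2)^2}$ and gives (b). I do not expect a genuine obstacle: the whole proof is one exact identity followed by two triangle-type inequalities, and the physical-space forms of $M_{k,2}$ in terms of $\int|q^{(k)}|^2$ and $\mathrm{Im}\int q^{(k+1)}q^{*(k)}$ are obtained separately via Plancherel and integration by parts. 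The single point demanding care is tracking the factor of $2$ through the $\lambda\mapsto\lambda/\sqrt2$ rescaling so that the normalization of $M_{k,2}$ and the $\tfrac12$ prefactor are consistent.
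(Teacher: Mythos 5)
Your proposal is correct and follows essentially the same route as the paper's proof: part (a) is the identical rationalization of the kernel $\frac{\xi}{4\lambda^2+2\xi}$ with the absolute value passed inside the integral, and part (b) is the same order-$N$ expansion of $\frac{\xi}{\lambda^2+\xi}$ with explicit remainder $\frac{(-1)^N\xi^{N+1}\lambda^{-2N}}{\lambda^2+\xi}$, closed by $|\mathrm{Re}\,z|\le|z|$ and the bound $|\lambda^2+\xi|\le|\mathrm{Re}\,\lambda^2+\xi|+\mathrm{Im}\,\lambda^2$ divided by $|\lambda^2+\xi|^2=(\mathrm{Re}\,\lambda^2+\xi)^2+(\mathrm{Im}\,\lambda^2)^2$. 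The only cosmetic difference is that you rescale $\lambda\mapsto\lambda/\sqrt{2}$ first and state the finite geometric identity as exact polynomial division, whereas the paper expands the infinite geometric series and resums the tail; your version sidesteps the implicit convergence restriction but is otherwise the same argument.
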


\begin{proof}
Firstly, we prove the property (a). We have
\begin{align}\no
\begin{aligned}
|\mathrm{Re}~c_2(\lambda)|&=|\mathrm{Re}~i\int_{-\infty}^{+\infty}\frac{\xi}{4\lambda^2+2\xi}|\hat{q}(\xi)|^2d\xi|,\vspace{0.05in}\\
&=|\mathrm{Im}~\int_{-\infty}^{+\infty}\frac{\xi}{4\lambda^2+2\xi}|\hat{q}(\xi)|^2d\xi|,\vspace{0.05in}\\
&=|\int_{-\infty}^{+\infty}\frac{4\xi\mathrm{Im}\lambda^2}{(4\mathrm{Re}\lambda^2+2\xi)^2+(4\mathrm{Im}\lambda^2)^2}|\hat{q}(\xi)|^2d\xi|,\vspace{0.05in}\\
&\leq\int_{-\infty}^{+\infty}\dfrac{|\xi|~\mathrm{Im}\lambda^2}{(2\mathrm{Re}\lambda^2+\xi)^2+(2\mathrm{Im}\lambda^2)^2}
|\hat{q}(\xi)|^2d\xi.
\end{aligned}
\end{align}

Then we prove the property (b). For $c_2(\lambda)$, we can rewrite it as
\begin{align}\no
\begin{aligned}
c_2(\lambda)&=i\int_{-\infty}^{+\infty}\frac{\xi}{4\lambda^2+2\xi}|\hat{q}(\xi)|^2d\xi\vspace{0.05in}\\
&=\frac{i}{4\lambda^2}\int_{-\infty}^{+\infty}\frac{\xi}{1-(-\frac{\xi}{2\lambda^2})}|\hat{q}(\xi)|^2d\xi\vspace{0.05in}\\
&=\frac{i}{4\lambda^2}\sum\limits_{j=0}^{\infty}\int_{-\infty}^{+\infty}\xi(-\frac{\xi}{2\lambda^2})^j|\hat{q}(\xi)|^2d\xi\vspace{0.05in}\\
&=\frac{i}{2}\left(\sum\limits_{j=0}^{N-1}\int_{-\infty}^{+\infty}(-1)^j\xi^{j+1}(2\lambda^2)^{-j-1}|\hat{q}(\xi)|^2d\xi+
\sum\limits_{j=N}^{\infty}\int_{-\infty}^{+\infty}(-1)^j\xi^{j+1}(2\lambda^2)^{-j-1}|\hat{q}(\xi)|^2d\xi\right)\vspace{0.05in}\\
&=\frac{i}{2}\left(\sum\limits_{j=0}^{N-1}\int_{-\infty}^{+\infty}(-1)^j\xi^{j+1}(2\lambda^2)^{-j-1}|\hat{q}(\xi)|^2d\xi+
\int_{-\infty}^{+\infty}\frac{(-2\lambda^2)^{-N}\xi^{N+1}}{2\lambda^2+\xi}|\hat{q}(\xi)|^2d\xi\right).
\end{aligned}
\end{align}

Then we have
\begin{align}\no
\begin{aligned}
&|\mathrm{Re}~(c_2(\frac{\lambda}{\sqrt{2}})-\frac{i}{2}\sum\limits_{k=0}^{N-1}M_{k,2}\lambda^{-2k-2})|\vspace{0.05in}\\
=&~|\mathrm{Im}~\frac12\int_{-\infty}^{+\infty}\frac{(-\lambda^2)^{-N}\xi^{N+1}}{\lambda^2+\xi}|\hat{q}(\xi)|^2d\xi|\vspace{0.05in}\\
\leq&~\frac{|\lambda|^{-2N}}{2}\int_{-\infty}^{+\infty}|\xi|^{N+1}\dfrac{\mathrm{Im}\lambda^2+|\mathrm{Re}\lambda^2+\xi|}{(\mathrm{Re}\lambda^2+\xi)^2+
(\mathrm{Im}\lambda^2)^2}|\hat{q}(\xi)|^2d\xi.
\end{aligned}
\end{align}

Thus the proof is completed.
\end{proof}


\section{Bounding the iterative integrals $s_{2j}$}

Here we first recall the function spaces $U^p,V^p$ and $DU^p$~\cite{dnls-Koch}.

\begin{defi}
(a) We define the space $V^p$ as the space of those function that the following norm is finite:
\bee\no
||v||_{V^p}=\sup\limits_{-\infty<t_1<t_2\cdots<t_N=+\infty}(\sum\limits_{j=1}^{N-1}|v(t_{j+1})-v(t_j)|^p)^{\frac{1}{p}},\quad 1<p<\infty,
\ene
where $v(t_N)=0$.

(b) A $U^p$ atom is defined as
\bee\no
u(x)=\sum\limits_{j=1}^{N-1}c_j\chi_{[t_j,t_{j+1})}(x),\quad if \sum\limits_{j=1}^{N-1}|c_j|^p\leq1,
\ene
where $\chi$ is the characteristic function as follows:
\bee
\chi_{[t_j,t_{j+1})}(t)=\begin{cases}
1,\quad t_j\leq t<t_{j+1}\vspace{0.03in}\\
0,\quad otherwise.
\end{cases}
\ene

We define the space $U^p$ as:
\bee\no
U^p=\left\{\sum\limits_{j=1}^{\infty}c_ja_j|(c_j)_j\in l^1,a_j~is~U^p~atom\right\},
\ene
with the following norm,
\bee\no
||u||_{U^p}=\inf\left\{\sum\limits_{j=1}^{\infty}|c_j|\big|u=\sum\limits_{j=1}^{\infty}c_ja_j,(c_j)_j\in l^1,a_j~is~U^p~atom\right\}.
\ene

(c) We define the space $DU^p$ as:
\bee\no
DU^p=\{u^{'}|u\in U^p\},
\ene
with the following norm,
\bee\no
||f||_{DU^p}=\sup\left\{\int_{-\infty}^{+\infty}f\phi dt\big|~||\phi||_{V^q}\leq1,~\phi\in C_c^{\infty}\right\}.
\ene

(d) We define the space $DV^p$ as:
\bee\no
DV^p=\{v^{'}|v\in V^p,v~is~left-continuous~functions~with~limit~0~at~the~right~endpoint~\},
\ene
with the following norm,
\bee\no
||f||_{DV^p}=\sup\left\{\int_{-\infty}^{+\infty}f\phi dt\big|~||\phi||_{U^q}\leq1,~\phi\in C_c^{\infty}\right\}.
\ene

(e) Let $\sigma>0$, we define
\bee\no
||u||_{l_{\sigma}^pU^2}=\big|\big|||\chi_{[\frac{k}{\sigma},\frac{k+1}{\sigma}]}u||_{U^2}\big|\big|_{l_k^p},
\ene
and
\bee\no
||u||_{l_{\sigma}^pDU^2}=\big|\big|||\chi_{[\frac{k}{\sigma},\frac{k+1}{\sigma}]}u||_{DU^2}\big|\big|_{l_k^p},
\ene
where $\chi_{[\frac{k}{\sigma},\frac{k+1}{\sigma}]}$ is a smooth cutoff function in interval $[\frac{k}{\sigma},\frac{k+1}{\sigma}]$.
\end{defi}

Let's recall some basic properties of the spaces $U^p,V^p$ and $DU^p$.
\begin{lemma}\label{le4}
(a) For all $1<p<\infty$, we have
\bee
U^p\subset V^p,~and~||u||_{V^p}\leq ||u||_{U^p}.
\ene

If $g\in L^1$, we have
\bee
||g*v||_{V^p}\leq||g||_{L^1}||v||_{V^p},\quad ||g*u||_{U^p}\leq||g||_{L^1}||u||_{U^p}.
\ene

(b) If $u\in U^2, v\in V^2$ and $v$ is left-continuous functions with limit 0 at the right endpoint, then
\bee
||u||_{U^2}=||u^{'}||_{DU^2},\quad ||v||_{V^2}=||v^{'}||_{DV^2}.
\ene

(c) The bilinear estimates
\bee
||vu||_{DU^2}\leq2||v||_{V^2}||u||_{DU^2}.
\ene
\end{lemma}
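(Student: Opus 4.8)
The plan is to treat the three parts separately, since (a) and (c) are elementary consequences of the definitions while (b) rests on the duality theory for the spaces $U^p$ and $V^p$ from \cite{dnls-Koch}, which is where the real work lies.

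For the embedding in (a) I would first check that a single $U^p$ atom $a=\sum_{j}c_j\chi_{[t_j,t_{j+1})}$ satisfies $\|a\|_{V^p}\le 1$: because $a$ takes only the finitely many values $0,c_1,\dots,c_{N-1}$, every sampling partition produces increments that are differences of these values, and the constraint $\sum_j|c_j|^p\le1$ bounds the resulting $p$-variation. Next I would note that $\|\cdot\|_{V^p}$ is itself a seminorm: for a fixed partition Minkowski's inequality gives $(\sum_j|\Delta(f+g)(t_j)|^p)^{1/p}\le(\sum_j|\Delta f(t_j)|^p)^{1/p}+(\sum_j|\Delta g(t_j)|^p)^{1/p}$, and taking the supremum over partitions preserves the triangle inequality. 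Hence for any atomic decomposition $u=\sum_j c_j a_j$ one has $\|u\|_{V^p}\le\sum_j|c_j|\,\|a_j\|_{V^p}\le\sum_j|c_j|$, and passing to the infimum over such decompositions yields $\|u\|_{V^p}\le\|u\|_{U^p}$. For the convolution estimates I would use translation invariance: writing $g*v=\int_{\mathbb{R}}g(s)\,\tau_s v\,ds$ with $\tau_s v(t)=v(t-s)$, both the $p$-variation and the vanishing condition at $+\infty$ are unchanged under $\tau_s$, so $\|\tau_s v\|_{V^p}=\|v\|_{V^p}$; Minkowski's integral inequality then gives $\|g*v\|_{V^p}\le\int_{\mathbb{R}}|g(s)|\,\|\tau_s v\|_{V^p}\,ds=\|g\|_{L^1}\|v\|_{V^p}$. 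Since $\tau_s$ also carries atoms to atoms with identical coefficients, the same argument produces the $U^p$ bound.

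Part (b) is the heart of the matter, and the step I expect to be the \textbf{main obstacle}. Both identities are instances of the duality $(U^2)^*\cong V^2$ realized through the bilinear pairing $B(u,v)=\int u\,dv$. Granting this duality one recovers $\|u\|_{U^2}=\sup\{|B(u,v)|:\|v\|_{V^2}\le1\}$, and integration by parts, $B(u,v)=\int u'\,v\,dt$ for test $v$, identifies the right-hand side with $\|u'\|_{DU^2}$; the symmetric pairing against $U^2$ gives $\|v\|_{V^2}=\|v'\|_{DV^2}$. The delicate point is that the supremum defining $DU^2$ (resp. $DV^2$) ranges only over $\phi\in C_c^\infty$, so one must show that smooth compactly supported functions are norming for the $V^2$ (resp. $U^2$) unit ball. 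This density/approximation statement is exactly the nontrivial input from the $U^p$--$V^p$ theory that I would import from \cite{dnls-Koch} rather than reprove.

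Finally, for the bilinear estimate (c) I would unwind the definition $\|vu\|_{DU^2}=\sup\{\int_{\mathbb{R}}vu\phi\,dt:\|\phi\|_{V^2}\le1,\ \phi\in C_c^\infty\}$, reassociate $\int vu\phi\,dt=\int u\,(v\phi)\,dt$, and apply the pairing characterization of $DU^2$ to bound this by $\|u\|_{DU^2}\|v\phi\|_{V^2}$. It then remains to prove the product estimate $\|v\phi\|_{V^2}\le2\|v\|_{V^2}\|\phi\|_{V^2}$: writing the increment $(v\phi)(t_{j+1})-(v\phi)(t_j)=v(t_{j+1})\,\Delta\phi(t_j)+\phi(t_j)\,\Delta v(t_j)$ and applying Minkowski gives $\|v\phi\|_{V^2}\le\|v\|_{L^\infty}\|\phi\|_{V^2}+\|\phi\|_{L^\infty}\|v\|_{V^2}$, after which the elementary bound $\|w\|_{L^\infty}\le\|w\|_{V^2}$ — obtained by testing the partition $\{t,+\infty\}$ against $w(+\infty)=0$ — produces the factor $2$. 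Combining these and taking the supremum over $\phi$ with $\|\phi\|_{V^2}\le1$ yields $\|vu\|_{DU^2}\le2\|v\|_{V^2}\|u\|_{DU^2}$, completing the sketch.
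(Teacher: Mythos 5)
You should first be aware that the paper contains no proof of Lemma \ref{le4} at all: it is stated as a recollection of known facts about the spaces $U^p$, $V^p$, $DU^p$ imported wholesale from \cite{dnls-Koch} (which in turn rests on the Hadac--Herr--Koch theory), so your sketch is being measured against the standard literature arguments rather than against anything in the paper. On that score your parts (b) and (c) are the standard proofs, correctly sketched: in (b) you rightly isolate the duality $(U^2)^*\cong V^2$ via the pairing $B(u,v)=\int u\,dv$, together with the fact that $C_c^\infty$ test functions are norming, as the substantive input, and deferring it to \cite{dnls-Koch} is exactly what the paper itself does by citation; in (c) the Leibniz splitting of increments plus the elementary bound $\|w\|_{L^\infty}\le\|w\|_{V^2}$ (from the two-point partition $\{t,+\infty\}$ and the convention $w(+\infty)=0$) gives $\|v\phi\|_{V^2}\le2\|v\|_{V^2}\|\phi\|_{V^2}$ and hence the factor $2$. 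The one caveat in (c) is that bounding $\int u(v\phi)\,dt$ by $\|u\|_{DU^2}\|v\phi\|_{V^2}$ applies the pairing to $v\phi\notin C_c^\infty$, which again requires the density statement you already deferred in (b); as a sketch this is internally consistent.

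There is, however, a concrete error in your part (a): the claim that every $U^p$ atom satisfies $\|a\|_{V^p}\le1$ is false under the conventions of this paper. Because the $V^p$ norm is computed with $v(t_N)=v(+\infty)=0$, already the one-step atom $a=c\,\chi_{[s,+\infty)}$ with $|c|=1$ has $\|a\|_{V^p}=2^{1/p}>1$ (sample $t_1<s\le t_2<t_3=+\infty$; the increments are $c$ and $-c$). Worse, the alternating atoms with coefficients $c_j=(-1)^jK^{-1/p}$, $1\le j\le K$, have $p$-variation $\bigl(2+(K-1)2^p\bigr)/K\to2^p$, so no constant below $2$ can work at the level of atoms. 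What your Minkowski argument actually yields, splitting $\bigl(\sum_m|\psi_m-\psi_{m-1}|^p\bigr)^{1/p}$ by the triangle inequality in $\ell^p$ and noting each coefficient is sampled at most once, is $\|a\|_{V^p}\le2$, hence $\|u\|_{V^p}\le2\|u\|_{U^p}$. This proves the embedding $U^p\subset V^p$, which is all that is used downstream in the paper, but it does not and cannot prove the constant-$1$ inequality as printed; that over-sharp constant is inherited verbatim from \cite{dnls-Koch}, and your verification as written (``the constraint $\sum_j|c_j|^p\le1$ bounds the resulting $p$-variation'') silently asserts the false atom bound instead of flagging the discrepancy. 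The convolution estimates via translation invariance and Minkowski's integral inequality are fine, modulo the routine limiting argument needed to realize $g*u=\int g(s)\,\tau_su\,ds$ as an $\ell^1$-superposition of atoms.
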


For convenience, we define the one-step operator as follows:
\bee\no
L(f)(t)=-\int_{x<y<t}q(y)q^*(x)e^{2i\lambda^2(y-x)}f(x)dxdy.
\ene

\begin{lemma}\label{le5}
For $\mathrm{Im}\lambda^2>0$, we have
\bee
||L||_{V^2\rightarrow U^2}\leq4\sqrt{2}||e^{-i\mathrm{Re}\lambda^2x}q||_{DU^2}^2.
\ene
\end{lemma}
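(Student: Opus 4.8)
The plan is to realize the one-step operator $L$ as a composition of two one-sided antiderivatives and to strip off the two copies of the potential with the bilinear estimate of Lemma \ref{le4}(c). First I would factor the kernel as $e^{2i\lambda^2(y-x)}=e^{2i\lambda^2 y}e^{-2i\lambda^2 x}$ and rewrite
\[L(f)(t)=-\int_{-\infty}^{t}q(y)e^{2i\lambda^2 y}\,g(y)\,dy,\qquad g(y):=\int_{-\infty}^{y}q^*(x)e^{-2i\lambda^2 x}f(x)\,dx,\]
so that $L(f)$ and $g$ are the one-sided primitives of $-q(\cdot)e^{2i\lambda^2\cdot}g$ and of $q^*(\cdot)e^{-2i\lambda^2\cdot}f$ respectively, both vanishing at $-\infty$ (the domain $x<y<t$ is empty as $t\to-\infty$), which is the boundary condition needed to invoke Lemma \ref{le4}(b).

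The inner step is estimated by passing to $DU^2$ via the identity $\|g\|_{U^2}=\|g'\|_{DU^2}$ of Lemma \ref{le4}(b) and then applying the bilinear bound of Lemma \ref{le4}(c) to $g'=(q^*e^{-2i\lambda^2\cdot})f$, taking $f\in V^2$ as the rough factor:
\[\|g\|_{U^2}=\|q^*e^{-2i\lambda^2\cdot}f\|_{DU^2}\le 2\|f\|_{V^2}\,\|q^*e^{-2i\lambda^2\cdot}\|_{DU^2}.\]
Since $U^2\subset V^2$ with $\|g\|_{V^2}\le\|g\|_{U^2}$ by Lemma \ref{le4}(a), the outer step repeats the same two tools with $(Lf)'=-q(\cdot)e^{2i\lambda^2\cdot}g$:
\[\|Lf\|_{U^2}=\|qe^{2i\lambda^2\cdot}g\|_{DU^2}\le 2\|g\|_{V^2}\,\|qe^{2i\lambda^2\cdot}\|_{DU^2}\le 4\|f\|_{V^2}\,\|q^*e^{-2i\lambda^2\cdot}\|_{DU^2}\,\|qe^{2i\lambda^2\cdot}\|_{DU^2}.\]
This already produces the operator bound $\|L\|_{V^2\to U^2}\le 4\,\|q^*e^{-2i\lambda^2\cdot}\|_{DU^2}\,\|qe^{2i\lambda^2\cdot}\|_{DU^2}$, with the factor $4$ coming from the two applications of Lemma \ref{le4}(c).

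It remains to reduce the two modulated $DU^2$ factors to a single clean quantity, i.e. to establish
\[\|q^*e^{-2i\lambda^2\cdot}\|_{DU^2}\,\|qe^{2i\lambda^2\cdot}\|_{DU^2}\le\sqrt2\,\|e^{-i\mathrm{Re}\lambda^2\cdot}q\|_{DU^2}^2.\]
Here I would use that the $DU^2$ norm is invariant under complex conjugation (being built from magnitudes of increments in $V^2$ and its dual), which turns the factor $\|q^*e^{-2i\lambda^2\cdot}\|_{DU^2}$ into the norm of $q$ carrying the conjugate exponential weight; the two factors then become $q$ modulated by a common real frequency but weighted by the opposite one-sided exponentials $e^{\mp2\mathrm{Im}\lambda^2 x}$. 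The hypothesis $\mathrm{Im}\lambda^2>0$ enters precisely at this point: on the region $x<y$ dictated by the nesting one has $e^{-2\mathrm{Im}\lambda^2(y-x)}\le 1$, so no negative power of $\mathrm{Im}\lambda^2$ is generated and the estimate stays uniform as $\mathrm{Im}\lambda^2\to0^+$. Matching the residual real modulation to the normalization $e^{-i\mathrm{Re}\lambda^2 x}$ and combining the two oppositely weighted copies by an elementary $\ell^2$/AM--GM step supplies the final constant $\sqrt2$.

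The routine part is the skeleton: nested primitives together with Lemma \ref{le4}(b),(c) is modulation-robust and essentially automatic. \emph{The main obstacle is the last reduction}, namely controlling the one-sided exponential weights $e^{\mp2\mathrm{Im}\lambda^2 x}$ and the modulation phases so that they collapse to $\|e^{-i\mathrm{Re}\lambda^2\cdot}q\|_{DU^2}^2$ uniformly in $\mathrm{Im}\lambda^2>0$. A naive symmetric factorization of the kernel makes each factor individually ill-behaved, so the weights must be kept tied to the ordering $x<y$ rather than split evenly between the two copies of $q$, and it is this coupling that has to be handled carefully to obtain the stated constant.
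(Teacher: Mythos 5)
Your skeleton --- two nested one-sided primitives vanishing at $-\infty$, Lemma \ref{le4}(b) to pass between $U^2$ and $DU^2$, and the bilinear estimate of Lemma \ref{le4}(c) applied once to each copy of the potential --- is exactly the paper's skeleton. The gap is in how you carry the factor $e^{2i\lambda^2(y-x)}$. You split it as $e^{2i\lambda^2 y}\cdot e^{-2i\lambda^2 x}$ and attach the pieces to $q$ and $q^*$ as multipliers; but for $\mathrm{Im}\,\lambda^2=\nu>0$ these weights have moduli $e^{-2\nu y}$ and $e^{+2\nu x}$, each unbounded on a half-line, so the quantities $\|q\,e^{2i\lambda^2\cdot}\|_{DU^2}$ and $\|q^*e^{-2i\lambda^2\cdot}\|_{DU^2}$ appearing in your chain are in general infinite for the class of $q$ the lemma addresses. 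Your intermediate bound is then vacuous, and your proposed closing inequality $\|q^*e^{-2i\lambda^2\cdot}\|_{DU^2}\,\|q\,e^{2i\lambda^2\cdot}\|_{DU^2}\le\sqrt2\,\|e^{-i\mathrm{Re}\lambda^2\cdot}q\|_{DU^2}^2$ is false as stated: its left side can be $+\infty$ while the right side is finite. You yourself flag this (``a naive symmetric factorization of the kernel makes each factor individually ill-behaved'') as the main obstacle, but the step you defer is not a finishing touch --- exploiting that $e^{-2\nu(y-x)}\le 1$ only on the ordered region $x<y$ \emph{is} the content of the lemma, and no rearrangement of modulations or AM--GM step can recover it once the weight has been split onto the two potentials.

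The paper never splits the damping factor. It first reduces to $\lambda^2=i$ by modulation and scaling (this is where the normalization $e^{-i\mathrm{Re}\lambda^2 x}q$ and the uniformity as $\nu\to0^+$ enter), then differentiates the nested primitive so that the inner integral appears as a \emph{convolution}: $(Lf)'(t)=q(t)\,\bigl(\chi_{t<0}e^{2t}*(q^*f)\bigr)(t)$, with the one-sided kernel $\chi_{t<0}e^{2t}\in L^1$ of norm $\tfrac12$. The exponential decay is used only through the $L^1$-convolution bounds of Lemma \ref{le4}(a) on $V^2/U^2/DU^2$, combined with the bilinear estimate on each copy of $q$; every intermediate norm is finite, and the bookkeeping of constants produces $4\sqrt2$. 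To repair your argument you must restructure the inner step in the same way --- keep $e^{2i\lambda^2(y-x)}$ as the kernel of a convolution operator acting on $q^*f$, never as separate multipliers on $q$ and $q^*$ --- at which point your proof becomes the paper's.
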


\begin{proof}
It suffices to consider $\lambda^2=i$. Then, according to Lemma.~\ref{le4}, we have
\begin{align}\no
\begin{aligned}
||Lf||_{U^2}&=||(\int_{-\infty}^t\int_{-\infty}^{y}q(y)q^*(x)e^{2(x-y)}f(x)dxdy)^{'}||_{DU^2}\vspace{0.05in}\\
&=||\int_{-\infty}^{t}q(t)q^*(x)e^{2(x-t)}f(x)dx||_{DU^2}\vspace{0.05in}\\
&\leq2||q||_{DU^2}||\chi_{t<0}e^{2t}*(q^*f)||_{V^2}\vspace{0.05in}\\
&\leq2\sqrt{2}||q||_{DU^2}||\chi_{t<0}e^{2t}*(q^*f)||_{U^2}\vspace{0.05in}\\
&\leq2\sqrt{2}||q||_{DU^2}||(\chi_{t<0}e^{2t}*(q^*f))^{'}||_{DU^2}\vspace{0.05in}\\
&\leq4\sqrt{2}||q||_{DU^2}||\chi_{t<0}e^{2t}*(q^*f)||_{DU^2}\vspace{0.05in}\\
&\leq2\sqrt{2}||q||_{DU^2}||q^*f||_{DU^2}\vspace{0.05in}\\
&\leq4\sqrt{2}||q||_{DU^2}^2||f||_{V^2}.
\end{aligned}
\end{align}

Thus the proof is completed.
\end{proof}

This bound is very sharp on the region $\Sigma$, but we want to move $\lambda$ into the region $\Omega_+$. So we need the following Lemma.
\begin{lemma}\label{le6}
(a) We have
\bee
||q||_{l_\sigma^pU^2}\lesssim||\partial q||_{l_\sigma^pDU^2}+\sigma||q||_{l_\sigma^pDU^2}.
\ene

(b) The space $l_\sigma^2U^2$ can be seen as
\bee
l_\sigma^2U^2=DU^2+\sqrt{\sigma}L^2.
\ene

(c) The following relationship hold:
\bee
B_{2,1}^{-\frac12}\subset l_1^2U^2\subset B_{2,\infty}^{-\frac12}.
\ene

(d) For all $p>2$, we have
\bee
||q||_{l_\tau^pDU^2}\lesssim\tau^{\frac{1}{p}-1}||q||_{\dot{H}^{\frac12-\frac{1}{p}}}.
\ene

If $0\leq\tau_1\leq\tau_2$, then
\bee
||q||_{l_{\tau_2}^pDU^2}\lesssim||q||_{l_{\tau_1}^pDU^2}\lesssim(\frac{\tau_2}{\tau_1})^{1-\frac{1}{p}}||q||_{l_{\tau_2}^pDU^2}.
\ene

\end{lemma}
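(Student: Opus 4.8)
The plan is to establish the four assertions in the order stated, relying throughout on the structural identities for $U^2$, $V^2$ and $DU^2$ collected in Lemma~\ref{le4}---above all the duality $\|u\|_{U^2}=\|u'\|_{DU^2}$ and the bilinear bound $\|vu\|_{DU^2}\le2\|v\|_{V^2}\|u\|_{DU^2}$---so that each claim reduces to a localization or a scaling computation. For part~(a) I would begin from the Leibniz identity $\partial(\chi_k q)=(\partial\chi_k)\,q+\chi_k\,(\partial q)$, where $\chi_k:=\chi_{[k/\sigma,(k+1)/\sigma]}$ is the smooth bump on a block of length $\sigma^{-1}$, for which $\|\chi_k\|_{V^2}\lesssim1$ and $\|\partial\chi_k\|_{V^2}\lesssim\sigma$. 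Using $\|\chi_k q\|_{U^2}=\|\partial(\chi_k q)\|_{DU^2}$ and applying the bilinear bound to the two summands, the term $\chi_k\,\partial q$ is controlled by the block-localized $DU^2$ norm of $\partial q$, while $(\partial\chi_k)\,q$ gains the factor $\sigma$ from $\|\partial\chi_k\|_{V^2}$; since $\chi_k$ and $\partial\chi_k$ are supported on block $k$, these contributions localize, and taking the $l_k^p$ norm with the finite overlap of the (slightly enlarged) supports yields $\|q\|_{l_\sigma^p U^2}\lesssim\|\partial q\|_{l_\sigma^p DU^2}+\sigma\,\|q\|_{l_\sigma^p DU^2}$.

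For part~(b), reading the right-hand side as the sum norm $\inf\{\|f\|_{DU^2}+\sqrt\sigma\,\|g\|_{L^2}:q=f+g\}$, I would establish the two inclusions separately. In one direction, given such a splitting, the $DU^2$ component localizes to the blocks with at most bounded loss while the $L^2$ component, restricted to a block of length $\sigma^{-1}$, is a $\sqrt\sigma$-multiple of a superposition of $U^2$ atoms---this is precisely where the scale factor $\sqrt\sigma$ is produced---so that summing the squared block norms reproduces $\|f\|_{DU^2}^2+\sigma\|g\|_{L^2}^2$. In the converse direction, given finite $\|q\|_{l_\sigma^2U^2}$, I would extract on each block the piece responsible for its $L^2$ size (assembling to the $\sqrt\sigma L^2$ summand) and leave an oscillatory remainder summing to a $DU^2$ element; equivalently one argues by duality, identifying the dual of the sum space with the intersection $V^2\cap\sigma^{-1/2}L^2$ and testing the block decomposition against it. The correct $\sqrt\sigma$ scaling of the $L^2$ endpoint is the delicate point to track.

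Parts~(c) and~(d) then follow. Specializing part~(b) to $\sigma=1$ gives $l_1^2U^2=DU^2+L^2$, so the embeddings $B_{2,1}^{-\frac12}\subset l_1^2U^2\subset B_{2,\infty}^{-\frac12}$ reduce to the known two-sided Besov characterization $B_{2,1}^{-\frac12}\subset DU^2\subset B_{2,\infty}^{-\frac12}$ (together with the harmless $L^2=B_{2,2}^0\subset B_{2,\infty}^{-\frac12}$), which I would verify by testing a Littlewood--Paley decomposition: $l^1$-summability over dyadic frequencies supplies the lower inclusion, while the single-block bound of $l_1^2U^2$ supplies the $l^\infty$-control defining $B_{2,\infty}^{-\frac12}$. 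For the first estimate of part~(d) I would compare $DU^2$ with $L^2$ at the block scale by a Bernstein-type bound and then reassemble the uniform blocks via the homogeneous Sobolev embedding at regularity $\frac12-\frac1p$, the scaling bookkeeping producing the prefactor $\tau^{\frac1p-1}$. The monotonicity chain for $0\le\tau_1\le\tau_2$ is purely combinatorial: refining each $\tau_1$-block into its $\sim\tau_2/\tau_1$ sub-blocks only redistributes mass and gives $\|q\|_{l_{\tau_2}^pDU^2}\lesssim\|q\|_{l_{\tau_1}^pDU^2}$, whereas aggregating back costs at most a H\"older factor $(\tau_2/\tau_1)^{1-\frac1p}$ coming from the number of sub-blocks per block.

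I expect part~(b) to be the main obstacle. Parts~(a),~(c),~(d) are, respectively, a localization estimate, a Littlewood--Paley matching and H\"older bookkeeping once~(b) is available, but the identification $l_\sigma^2U^2=DU^2+\sqrt\sigma L^2$ is a genuine two-sided structural statement: the forward inclusion requires a careful atomic decomposition on each block, the converse a clean duality argument against $V^2\cap\sigma^{-1/2}L^2$, and the correct $\sqrt\sigma$ scaling of the $L^2$ endpoint must be tracked throughout. All of these arguments transcribe the $U^2$--$V^2$ machinery developed by Koch and Tataru~\cite{dnls-Koch} to the present setting.
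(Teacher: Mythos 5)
First, a point of reference: the paper itself contains no proof of this lemma. Like Lemma~\ref{le4}, it is recalled from Koch--Tataru \cite{dnls-Koch} and stated without argument, so your proposal can only be judged against the standard $U^2$--$V^2$ arguments there. Measured that way, your part (a) is essentially the right proof (Leibniz rule $\partial(\chi_k q)=(\partial\chi_k)q+\chi_k\partial q$, the identity $\|u\|_{U^2}=\|u'\|_{DU^2}$, and the bilinear estimate of Lemma~\ref{le4}(c) with $\|\partial\chi_k\|_{V^2}\lesssim\sigma$), and the monotonicity chain in (d) is correct in outline, modulo one glossed point: the refinement direction $\|q\|_{l^p_{\tau_2}DU^2}\lesssim\|q\|_{l^p_{\tau_1}DU^2}$ is not mere ``redistribution of mass''; it needs the square-summability of $DU^2$ norms over disjoint sub-intervals together with $\ell^2\subset\ell^p$ for $p\geq2$, while the H\"older factor $(\tau_2/\tau_1)^{1-1/p}$ in the other direction is as you say.

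The genuine gap is in part (b), which you yourself flag as the crux. Your forward inclusion rests on the claim that the $L^2$ component restricted to a block of length $\sigma^{-1}$ is ``a $\sqrt{\sigma}$-multiple of a superposition of $U^2$ atoms.'' This is false: a $U^2$ atom is a bounded step function, so $U^2\hookrightarrow L^\infty$ with $\|u\|_{L^\infty}\leq\|u\|_{U^2}$, whereas a generic $L^2$ function is unbounded on every block; hence $L^2(I)\not\subset U^2(I)$ at any scale. The same observation shows the identity as printed, $l^2_\sigma U^2=DU^2+\sqrt{\sigma}L^2$, cannot be literally correct --- and likewise $B^{-\frac12}_{2,1}\subset l^2_1U^2$ in (c), since $B^{-\frac12}_{2,1}$ contains distributions that are not locally square-integrable, let alone locally bounded. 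The left-hand sides must be the $DU^2$-localized spaces $l^2_\sigma DU^2$ and $l^2_1DU^2$, as in \cite{dnls-Koch}; your blind reading inherited a misprint of the paper. Once corrected, the inclusion $\sqrt{\sigma}L^2\subset l^2_\sigma DU^2$ is a one-line Cauchy--Schwarz estimate, $\int_I f\phi\,dt\leq\|f\|_{L^2(I)}|I|^{1/2}\|\phi\|_{L^\infty}\leq\sigma^{-1/2}\|f\|_{L^2(I)}\|\phi\|_{V^2}$, with no atomic decomposition involved; the substantive direction $l^2_\sigma DU^2\subset DU^2+\sqrt{\sigma}L^2$ you only describe (``extract on each block the piece responsible for its $L^2$ size,'' or duality against $V^2\cap\sigma^{-1/2}L^2$) without carrying out the required per-block splitting of $q$ into an increment part that reassembles in $DU^2$ via square-summable disjoint pieces plus a block-mean correction controlled in $L^2$ --- which is exactly the content to be proved. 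The first estimate of (d) has a related soft spot: your plan of block-$L^2$ control followed by the embedding for $\dot H^{\frac12-\frac1p}$ fails at low frequencies (positive-regularity homogeneous Sobolev norms do not control block $L^2$ norms); the standard route is instead interpolation between the $p=2$ case, which is precisely the easy half of the corrected (b), and a $p=\infty$ endpoint, or the direct Besov block estimates of \cite{dnls-Koch}.
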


\begin{lemma}\label{le7}
For $\mathrm{Im}\lambda^2>0$, we have
\bee
||L||_{U^2\rightarrow U^2}\lesssim||e^{-i\mathrm{Re}\lambda^2x}q||_{l^2_{\mathrm{Im}\lambda^2}DU^2}^2.
\ene

\end{lemma}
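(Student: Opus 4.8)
The plan is to reduce to a purely imaginary spectral value and then to localize the sharp fixed-scale bound of Lemma~\ref{le5} to intervals of length $1/\mathrm{Im}\lambda^2$. First I would absorb the real part of $\lambda^2$ into the potential: writing $\lambda^2=\mathrm{Re}\lambda^2+i\sigma$ with $\sigma=\mathrm{Im}\lambda^2>0$ and replacing $q$ by the modulated potential $e^{-i\mathrm{Re}\lambda^2 x}q$ that appears on the right-hand side (which absorbs the oscillatory factor $e^{2i\mathrm{Re}\lambda^2(y-x)}$), one reduces to $\mathrm{Re}\lambda^2=0$, so that $L$ takes the form $L(f)(t)=-\int_{x<y<t}q(y)q^*(x)e^{-2\sigma(y-x)}f(x)\,dx\,dy$ with $q$ now denoting the modulated potential. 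The surviving kernel $e^{-2\sigma(y-x)}\chi_{x<y}$ has $L^1$-mass $1/(2\sigma)$ and is concentrated on the scale $|y-x|\lesssim 1/\sigma$, which is exactly the length of the intervals $I_k=[k/\sigma,(k+1)/\sigma]$ defining $\|\cdot\|_{l^2_\sigma DU^2}$. This scale-matching is the reason the localized norm is the natural one, and it is the feature that lets us upgrade the global $DU^2$ norm of Lemma~\ref{le5} to the localized one.

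Next I would run the computation of Lemma~\ref{le5} while keeping track of the interval decomposition. As there, $Lf$ is the primitive of $-qg$ with $g=(\chi_{s>0}e^{-2\sigma s})*(q^*f)$, so by Lemma~\ref{le4}(b) one has $\|Lf\|_{U^2}=\|qg\|_{DU^2}$. Decomposing $q=\sum_k q_k$, $q_k=\chi_{I_k}q$, I would apply the bilinear estimate of Lemma~\ref{le4}(c) together with the convolution bound $\|\chi_{s>0}e^{-2\sigma s}*h\|_{U^2}\lesssim \sigma^{-1}\|h\|_{U^2}$ coming from Lemma~\ref{le4}(a) (and the embedding $U^2\subset V^2$) on each interval, so that $f\in U^2$ may be used at the end. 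The exponential factor supplies a coupling coefficient decaying like $e^{-c|j-k|}$ between $I_j$ and $I_k$, so the resulting double sum over interval indices is controlled by a Schur test; combined with two applications of the bilinear estimate (one for each copy of $q$) this produces the square $\|q\|_{l^2_\sigma DU^2}^2\,\|f\|_{U^2}$. The scaling in $\sigma$ and the passage between $DU^2$, $L^2$ and their localized versions are handled by Lemma~\ref{le6}, in particular the identification $l^2_\sigma U^2=DU^2+\sqrt\sigma L^2$, which lets me split $q$ into a $DU^2$ piece (treated exactly by Lemma~\ref{le5}) and a $\sqrt\sigma L^2$ piece (treated by a crude $L^2$ bound) and recombine.

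The hard part will be the off-diagonal summation: unlike Lemma~\ref{le5}, where a single global $DU^2$ norm suffices and no localization is needed, here I must show that the bilinear operator respects the $l^2$ decomposition over the intervals $I_k$ up to exponentially small cross-interactions, so that the global $DU^2$ norm can genuinely be replaced by $\|\cdot\|_{l^2_\sigma DU^2}$. This is also where taking $U^2$ (rather than $V^2$) as the input space matters, since it is what allows both the input $f$ and the output $Lf$ to be localized compatibly and the interval sums to be recombined by Cauchy--Schwarz without losing the square. Establishing the localized bilinear and convolution estimates with $\sigma$-uniform constants, and verifying that the exponential kernel decay beats the number of interacting intervals in the Schur test, is the main technical obstacle; once these are in place, the claimed bound follows by summation.
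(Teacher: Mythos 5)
Your proposal follows essentially the same route as the paper: reduce to $\lambda^2=i$ by modulation, write $\|Lf\|_{U^2}$ as the $DU^2$ norm of $q\cdot\bigl(\chi_{t<0}e^{2t}*(q^*f)\bigr)$, and apply localized bilinear and convolution estimates twice (with the interval sums recombined by Cauchy--Schwarz, as you describe) to reach $\|q\|_{l^2DU^2}^2\|f\|_{U^2}$. The auxiliary splitting via $l^2_\sigma U^2=DU^2+\sqrt{\sigma}L^2$ from Lemma~\ref{le6} plays no role in the paper's argument, but your core chain of estimates is exactly the one the paper runs.
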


\begin{proof}
It suffices to consider $\lambda^2=i$. Then, we have
\begin{align}\no
\begin{aligned}
||Lf||_{U^2}&=||\int_{-\infty}^{t}q(t)q^*(x)e^{2(x-t)}f(x)dx||_{DU^2}\vspace{0.05in}\\
&\lesssim||q||_{l^2DU^2}||\chi_{t<0}e^{2t}*(q^*f)||_{l^2U^2}\vspace{0.05in}\\
&\lesssim||q||_{l^2DU^2}||\left(\chi_{t<0}e^{2t}*(q^*f)\right)^{'}||_{l^2DU^2}\vspace{0.05in}\\
&\lesssim||q||_{l^2DU^2}||q^*f||_{l^2DU^2}\vspace{0.05in}\\
&\lesssim||q||_{l^2DU^2}^2||f||_{U^2}.
\end{aligned}
\end{align}

Thus the proof is completed.
\end{proof}

Based on the above analysis, we will provide an estimate of $s_{2j}(\lambda)$ and $b_{2j}(\lambda)$.

\begin{theorem}\label{th6}
The iterated integrals $s_{2j}(\lambda)$ and $b_{2j}(\lambda)$ have the following estimate:
\bee
|\lambda^{-2j}s_{2j}(\lambda)|+|\lambda^{-2j}b_{2j}(\lambda)|\leq C||e^{-i\mathrm{Re}\lambda^2x}q||_{l^2_{\mathrm{Im}\lambda^2}DU^2}^{2j}.
\ene

\end{theorem}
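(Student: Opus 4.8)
The plan is to recognize the fully ordered integral $\lambda^{-2j}s_{2j}(\lambda)$ as the $j$-fold iterate of the one-step operator $L$ applied to the constant seed $\mathbf{1}\equiv 1$, evaluated at the right endpoint. Comparing with the iteration in the proof of Theorem~\ref{th1}, one checks by induction that
\[
\lambda^{-2j}s_{2j}(\lambda)=\lim_{t\to+\infty}\big(L^{j}\mathbf{1}\big)(t),
\]
since each application of $L$ introduces one new adjacent pair $x_k<y_k$ nested inside the previous domain, reproducing exactly the simplex $x_1<y_1<\cdots<x_j<y_j$ together with the sign $(-1)^j$ (and the explicit prefactor $\lambda^{2j}$ of $s_{2j}$ has been stripped off). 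First I would make this identification precise, noting that $L\mathbf{1}$ is an antiderivative vanishing at $-\infty$, hence a genuine $U^2$ function, so that all subsequent compositions remain inside $U^2$.

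The bound for $s_{2j}$ then follows from iterating the operator estimate of Lemma~\ref{le7}. As in the proofs of Lemmas~\ref{le5} and \ref{le7}, I would reduce to the normalized case $\lambda^2=i$ by scaling together with the gauge substitution $q\mapsto e^{-i\mathrm{Re}\lambda^2 x}q$, which turns the oscillatory factor $e^{2i\lambda^2(y-x)}$ into the integrable convolution kernel $\chi_{\{t<0\}}e^{2\mathrm{Im}\lambda^2 t}$ and explains the appearance of $e^{-i\mathrm{Re}\lambda^2 x}q$ in the norm. Writing $\beta:=||e^{-i\mathrm{Re}\lambda^2 x}q||_{l^2_{\mathrm{Im}\lambda^2}DU^2}$, Lemma~\ref{le7} gives $||L||_{U^2\to U^2}\lesssim\beta^2$, while the seed satisfies $||L\mathbf{1}||_{U^2}\lesssim\beta^2$ by the identical convolution computation run with $f\equiv1$. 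Since $U^2$ embeds continuously into $L^\infty$ with $||u||_{L^\infty}\le||u||_{U^2}$, the limiting value at $+\infty$ is controlled by the norm, whence
\[
|\lambda^{-2j}s_{2j}(\lambda)|\le||L^{j}\mathbf{1}||_{U^2}\le||L||_{U^2\to U^2}^{\,j-1}\,||L\mathbf{1}||_{U^2}\lesssim\beta^{2j},
\]
which is the desired estimate with constant $C=C_0^{\,j}$.

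For $b_{2j}$ I would avoid bounding the non-simplex connected domains directly and instead exploit the algebraic relation from Theorem~\ref{th2}: each $b_{2j}$ is a fixed finite linear combination of products $s_{2k_1}\cdots s_{2k_m}$ with $k_1+\cdots+k_m=j$. Since every factor obeys $|\lambda^{-2k_i}s_{2k_i}|\lesssim\beta^{2k_i}$ by the previous step, multiplying gives $|\lambda^{-2j}s_{2k_1}\cdots s_{2k_m}|\lesssim\beta^{2j}$, and summing the finitely many terms yields $|\lambda^{-2j}b_{2j}(\lambda)|\lesssim\beta^{2j}$. Adding the two estimates then completes the proof.

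The main obstacle I anticipate is not the geometric iteration itself but the two structural points underpinning it. First, one must justify that the scalar $\lambda^{-2j}s_{2j}(\lambda)$ genuinely is the endpoint value of a $U^2$ function, so that the embedding $U^2\hookrightarrow L^\infty$ applies and the seed $L\mathbf{1}$ is legitimately in $U^2$ even though $\mathbf{1}$ itself only lies in $V^2$. Second, one must ensure that the seed estimate $||L\mathbf{1}||_{U^2}\lesssim\beta^2$ is expressed in the same localized $l^2_{\mathrm{Im}\lambda^2}DU^2$ norm as the operator bound, which forces me to re-run the convolution step of Lemma~\ref{le7} with input $f\equiv1$ rather than invoking the plain-$DU^2$ bound of Lemma~\ref{le5}. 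Once these are settled, the constants multiply cleanly and the homogeneity degree $2j$ is automatic.
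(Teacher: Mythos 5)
Your proposal is correct and takes essentially the same route as the paper: the paper likewise identifies $\lambda^{-2j}s_{2j}(\lambda)$ with $\lim_{x\to+\infty}L^j1(x)$ via the series $\psi_1=e^{-i\lambda^2x}\sum_j\lambda^{2j}L^j1(x)$ from Theorem~\ref{th1}, bounds it by iterating the localized operator estimate of Lemma~\ref{le7} (your explicit treatment of the seed $L\mathbf{1}$ and the $U^2\hookrightarrow L^\infty$ endpoint evaluation just makes precise what the paper leaves implicit), and then transfers the bound to $b_{2j}$ through the identity $\sum_j(z\lambda^{-2})^jb_{2j}=\ln\bigl(1+\sum_j(z\lambda^{-2})^js_{2j}\bigr)$. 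The only difference is bookkeeping in that last step: where you expand $b_{2j}$ term by term via Theorem~\ref{th2} and sum the products, the paper majorizes the whole series at once using the coefficientwise order $\preceq$ and $\ln(1+z)\preceq f(z)=z/(1-z)$, obtaining $|\lambda^{-2j}b_{2j}|\leq 2^{j-1}C_1^j$ --- which makes explicit that the constant hidden in your ``summing the finitely many terms'' grows (at most) geometrically in $j$, since the number of compositions of $j$ is $2^{j-1}$, rather than being uniform in $j$.
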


\begin{proof}
According to Theorem \ref{th1}, the first component of the Jost solution can be rewritten as
\bee\no
\psi_1(x)=e^{-i\lambda^2x}\sum\limits_{j=0}^{\infty}\lambda^{2j}L^j1(x).
\ene

Then, the transmission coefficient $s_{11}(\lambda)$ can be expressed as
\bee\no
s_{11}(\lambda)=\lim\limits_{x\rightarrow +\infty}\sum\limits_{j=0}^{\infty}\lambda^{2j}L^j1(x).
\ene

Firstly, we will introduce a partial order $\preceq$. $f_1\preceq f_2$ means that each coefficient of Taylor expansion at zero for $f_1$ is not greater than the coefficient of Taylor expansion at zero for $f_2$. Because of both the iterated integrals $s_{2j}$ and $b_{2j}$ are homogeneous forms, we have
\bee\no
\sum\limits_{j=1}^{\infty}(z\lambda^{-2})^jb_{2j}=\ln\left(1+\sum\limits_{j=1}^{\infty}(z\lambda^{-2})^js_{2j}\right).
\ene

Let $f: z\rightarrow \frac{z}{1-z}$, and we note that $\ln(1+z)\preceq f(z)$. Then we have
\bee\label{th6-1}
\sum\limits_{j=1}^{\infty}(z\lambda^{-2})^jb_{2j}\preceq f(f(C_1z)),
\ene
where
\bee\no
C_1=(\frac{C}{2^{j-1}})^{\frac{1}{j}}||e^{-i\mathrm{Re}\lambda^2x}q||_{l^2_{\mathrm{Im}\lambda^2}DU^2}^{2}
\ene

Simplifying Eq.~(\ref{th6-1}) yields
\bee
\sum\limits_{j=1}^{\infty}(z\lambda^{-2})^jb_{2j}\preceq \sum\limits_{j=0}^{\infty}2^j(C_1z)^{j+1}.
\ene

Comparing the coefficients of each power of $z$, we get
\bee\no
|\lambda^{-2j}b_{2j}(\lambda)|\leq 2^{j-1}C_1^j.
\ene

Thus the proof is completed.
\end{proof}

\begin{theorem}\label{th7}
Suppose that $q\in H^s$. If $-\frac12<s\leq\frac{j-1}{2}$, then we have
\bee\label{th7--2}
|s_{2j}(e^{\frac{i\pi}{4}}\sqrt{\frac{\zeta}{2}})|+|b_{2j}(e^{\frac{i\pi}{4}}\sqrt{\frac{\zeta}{2}})|\leq C(1+\frac{1}{2s+1})\zeta^{j-2s-1}||q||_{H^s}^2
||q||_{l_1^2DU^2}^{2j-2},
\ene
and
\begin{align}\label{th7--1}
\begin{aligned}
\int_{1}^{\infty}&\zeta^{2s-j}(|s_{2j}(e^{\frac{i\pi}{4}}\sqrt{\frac{\zeta}{2}})|+|b_{2j}(e^{\frac{i\pi}{4}}\sqrt{\frac{\zeta}{2}})|)d\zeta\vspace{0.05in}\\
&\lesssim(1+\frac{1}{j-1-2s}+\frac{1}{(2s+1)^2})||q||_{H^s}^2||q||_{l_1^2DU^2}^{2j-2}.
\end{aligned}
\end{align}

\end{theorem}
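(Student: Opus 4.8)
The plan is to feed the multilinear bound of Theorem~\ref{th6} into the concrete test point $\lambda=e^{\frac{i\pi}{4}}\sqrt{\zeta/2}$ and then convert the resulting $DU^2$--norms into $H^s$--information by the scaling and embedding estimates of Lemma~\ref{le6}. First I would substitute this $\lambda$, for which $\lambda^2=\frac{i}{2}\zeta$, so that $\mathrm{Re}\,\lambda^2=0$ (hence the phase $e^{-i\mathrm{Re}\lambda^2 x}$ disappears), $\mathrm{Im}\,\lambda^2=\zeta/2$ and $|\lambda|^{2j}=(\zeta/2)^{j}$. Theorem~\ref{th6} then collapses to the single inequality
\bee\no
|s_{2j}(e^{\frac{i\pi}{4}}\sqrt{\tfrac{\zeta}{2}})|+|b_{2j}(e^{\frac{i\pi}{4}}\sqrt{\tfrac{\zeta}{2}})|\le C\,(\tfrac{\zeta}{2})^{j}\,\|q\|_{l^2_{\zeta/2}DU^2}^{2j},
\ene
so that the whole theorem reduces to controlling one scale-$\zeta/2$ norm raised to the power $2j$.

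Next I would split the $2j$ copies of the norm into two \emph{regularity} factors and $2j-2$ \emph{background} factors. For the background factors the monotonicity in the scale from Lemma~\ref{le6}(d) gives $\|q\|_{l^2_{\zeta/2}DU^2}\lesssim\|q\|_{l^2_1 DU^2}$ for $\zeta\ge1$, which already produces the advertised factor $\|q\|_{l^2_1DU^2}^{2j-2}$. The two remaining factors, carrying the full weight $(\zeta/2)^{j}$, are where the $H^s$--norm and the power of $\zeta$ must be created. Here I would apply the embedding $\|q\|_{l^p_\tau DU^2}\lesssim\tau^{\frac1p-1}\|q\|_{\dot H^{\frac12-\frac1p}}$ of Lemma~\ref{le6}(d) with the choice $\frac1p=\frac12-s$ (so that $\dot H^{\frac12-\frac1p}=\dot H^{s}$), after a Littlewood--Paley decomposition $q=\sum_k q_k$ separating frequencies $|\xi|\gtrsim\zeta$ from $|\xi|\lesssim\zeta$. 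The high-frequency part yields the decay $(\zeta/2)^{-1-2s}\|q\|_{H^s}^2$, which after multiplication by $(\zeta/2)^{j}$ reproduces exactly the growth $\zeta^{\,j-2s-1}$ of the pointwise bound (\ref{th7--2}); the low-frequency part is summable precisely because $s>-\frac12$, its dyadic sum being geometric with ratio $2^{2s+1}$, which is what generates the constant $1+\frac1{2s+1}$.

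For the integral bound (\ref{th7--1}) I would \emph{not} integrate the pointwise estimate directly: against the weight $\zeta^{2s-j}$ the bound (\ref{th7--2}) only gives an integrand of size $\zeta^{-1}$, which fails to be integrable on $[1,\infty)$. Instead I would keep the frequency-resolved form of $\|q\|_{l^2_{\zeta/2}DU^2}^{2}$ and exchange the order of the $\zeta$--integration and the frequency summation by Fubini. Each dyadic block $q_k$ is resonant only for $\zeta\sim 2^{2k}$ and decays as $\zeta$ moves away from this window; integrating each block over its active $\zeta$--range turns the logarithmically divergent tail into a convergent power integral. The outer $\zeta$--tail, whose exponent degenerates as $s\to\frac{j-1}{2}$, produces the factor $\frac1{j-1-2s}$ (this is exactly where the hypothesis $s\le\frac{j-1}{2}$ enters), while the two remaining geometric summations over frequency, both degenerating as $s\to-\frac12^{+}$, produce the factor $\frac1{(2s+1)^2}$.

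The hard part will be this last step: since the naive bound is not integrable, one must genuinely exploit the frequency localization of the scale-$\zeta/2$ norm, justify the interchange of integration and summation, and then sum the resulting double geometric series while tracking the blow-up of the constants at both endpoints $s=-\frac12$ and $s=\frac{j-1}{2}$. A secondary technical point is the $l^2$ versus $l^p$ mismatch in the embedding of Lemma~\ref{le6}(d) at the borderline $p=2$ (equivalently $s=0$), which I expect to handle through the dyadic splitting rather than through a single application of the embedding.
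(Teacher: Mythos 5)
Your plan coincides with the paper's own proof: it likewise evaluates Theorem \ref{th6} at $\lambda=e^{\frac{i\pi}{4}}\sqrt{\zeta/2}$, uses a Littlewood--Paley decomposition together with the frequency-localized bounds of Lemma \ref{le6} to obtain the pointwise estimate (with the constant $1+\frac{1}{2s+1}$ coming from a dyadic geometric sum whose ratio degenerates as $s\to-\frac12$), and, for the integrated bound, correctly refuses to integrate the borderline $\zeta^{-1}$ pointwise estimate and instead keeps the frequency-resolved form and interchanges the $\zeta$-integration with the frequency sums --- precisely what the paper implements via Cauchy--Schwarz and Schur's lemma applied to the kernel $C(\zeta,k_1,k_2)$, producing the critical constants $\frac{1}{j-1-2s}$ and $\frac{1}{(2s+1)^2}$. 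Two cosmetic slips do not affect the argument: since $\zeta\sim|\lambda|^2$ already, the block at dyadic frequency $k$ is resonant for $\zeta\sim k$ rather than $\zeta\sim 2^{2k}$, and the $\frac{1}{2s+1}$ blow-up is generated by the tail over frequencies \emph{above} $\zeta$ (convergent exactly when $s>-\frac12$), not by the low-frequency part.
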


\begin{proof}
For convenience, we define some symbols:
\begin{align}\no
\left\{\begin{aligned}
&\sum_{k}\cdot:=\sum_{j=0,k=2^j}^{\infty}\cdot,\quad q=\sum_kq_k,\vspace{0.05in}\\
&\hat{q}_1=\chi_{|\xi|<1}\hat{q},\quad\hat{q}_{<k}=\chi_{|\xi|<k}\hat{q},\quad\hat{q}_{k}=\chi_{k\leq|\xi|<2k}\hat{q}.
\end{aligned}\right.
\end{align}

According to Theorem \ref{th6}, we have
\bee
|s_{2j}(e^{\frac{i\pi}{4}}\sqrt{\frac{\zeta}{2}})|\lesssim\zeta^{j}\sum_{k_1\geq k_2}||q_{k_1}||_{l_{\zeta}^2DU^2}||q_{k_2}||_{l_{\zeta}^2DU^2}
||q_{\leq k_2}||^{2j-2}_{l_{\zeta}^2DU^2}.
\ene

Below, we will to investigate the classification for the above inequality. According to Lemma \ref{le6}, we have
\begin{align}\label{th7-1}
\begin{aligned}
&||q_{k}||_{l^2_{\zeta}DU^2}\lesssim k^{-s}\zeta^{-\frac12}||q||_{H^s},\quad if~1<k\leq\zeta\vspace{0.05in}\\
&||q_{k}||_{l^2_{\zeta}DU^2}\lesssim k^{-s-\frac12}||q||_{H^s},\quad if~k\geq\zeta\vspace{0.05in}\\
&||q_{<k}||_{l^2_{\zeta}DU^2}\lesssim||q_{<k}||_{l^2_1DU^2},\quad if~k\geq\zeta\vspace{0.05in}\\
&||q_{<k}||_{l^2_{\zeta}DU^2}\lesssim k^{\frac12}\zeta^{-\frac12}||q_{<k}||_{l^2_1DU^2},\quad if~k<\zeta.
\end{aligned}
\end{align}

Then ,we obtain
\bee
|s_{2j}(e^{\frac{i\pi}{4}}\sqrt{\frac{\zeta}{2}})|\lesssim\zeta^{j-2s-1}\sum_{k_1\geq k_2}C(\zeta,k_1,k_2)||q_{k_1}||_{H^s}||q_{k_2}||_{H^s}||q||_{l_1^2DU^2}^{2j-2}
\ene
where
\bee
C(\zeta,k_1,k_2)=\begin{cases}
(\frac{k_1}{\zeta})^{j-2s-1}(\frac{k_2}{k_1})^{j-s-1},\quad k_2\leq k_1\leq\zeta,\vspace{0.03in}\\
(\frac{\zeta}{k_1})^{s+\frac12}(\frac{k_2}{\zeta})^{j-s-1},\quad k_2\leq\zeta\leq k_1,\vspace{0.03in}\\
(\frac{\zeta}{k_1})^{s+\frac12}(\frac{\zeta}{k_2})^{s+\frac12},\quad \zeta\leq k_2\leq k_1.
\end{cases}
\ene

Then the critical coefficients are $\frac{1}{2s+1}$, so we get Eq.~(\ref{th7--2}). Moreover, by the Cauchy-Schwarz inequality and Schur's lemma, we get
\bee\no
\int_{1}^{\infty}\zeta^{2s-j}|s_{2j}(e^{\frac{i\pi}{4}}\sqrt{\frac{\zeta}{2}})|d\zeta\lesssim c||q||_{H^s}^2{l^2_1DU^2},
\ene
where
\bee\no
c=\max\{\sup_{k_1}\sum_{\zeta,k_2}C(\zeta,k_1,k_2),\sup_{k_2}\sum_{\zeta,k_1}C(\zeta,k_1,k_2)\}.
\ene

Then the critical coefficients are $\frac{1}{j-1-2s}, \frac{1}{(2s+1)^2}$, so we get Eq.~(\ref{th7--1}). Thus the proof is completed.
\end{proof}

\section{Asymptotic analysis of $b_4(\lambda)$ and $b_6(\lambda)$}

In this section, we will provide asymptotic expressions for $b_4(\lambda)$ and $b_6(\lambda)$ and some related conclusions. For the analysis of $s_2(\lambda)$, we use the Fourier transform method. Here we use the same technique to analyze $b_4(\lambda)$ and $b_6(\lambda)$. We recall that $b_4(\lambda)$ is given by
\begin{align}\label{b4-1}
\begin{aligned}
b_4(\lambda)&=-2\lambda^4XXYY\vspace{0.05in}\\
&=-2\lambda^4\int_{x_1<x_2<y_1<y_2}q(y_1)q^*(x_1)q(y_2)q^*(x_2)e^{2i\lambda^2(y_1+y_2-x_1-x_2)}dx_1dy_1dx_2dy_2\vspace{0.05in}\\
&=-\frac{\lambda^4}{2\pi^2}\int_{\mathbb{R}^4}\int_{x_1<x_2<y_1<y_2}e^{2i\lambda^2(y_1+y_2-x_1-x_2)+i(y_1\eta_1+y_2\eta_2-x_1\xi_1-x_2\xi_2)}\vspace{0.05in}\\
&\quad\times \hat{q}(\eta_1)\hat{q}(\eta_2)\hat{q}^*(\xi_1)\hat{q}^*(\xi_2)dx_1dy_1dx_2dy_2d\xi_1d\xi_2d\eta_1d\eta_2\vspace{0.05in}\\
&=-\frac{\lambda^4}{2\pi^2}\int_{\mathbb{R}^4}\left(\int_{x_1<x_2<y_1<y_2}e^{i(2\lambda^2+\eta_1)y_1+i(2\lambda^2+\eta_2)y_2-i(2\lambda^2+\xi_1)x_1-i(2\lambda^2+\xi_2)x_2}dx_1dx_2dy_1dy_2\right)\vspace{0.05in}\\
&\quad\times \hat{q}(\eta_1)\hat{q}(\eta_2)\hat{q}^*(\xi_1)\hat{q}^*(\xi_2)d\xi_1d\xi_2d\eta_1d\eta_2.
\end{aligned}
\end{align}

Let
\begin{align}\label{b4-2}
\begin{aligned}
K(\xi_1,\xi_2,\eta_1,\eta_2):&=\int_{x_1<x_2<y_1<y_2}e^{i(2\lambda^2+\eta_1)y_1+i(2\lambda^2+\eta_2)y_2-i(2\lambda^2+\xi_1)x_1-i(2\lambda^2+\xi_2)x_2}dx_1dx_2dy_1dy_2\vspace{0.05in}\\
&=\int_{-\infty}^{\infty}\int_{-\infty}^{y_2}\int_{-\infty}^{y_1}\int_{-\infty}^{x_2}e^{i(2\lambda^2+\eta_1)y_1+i(2\lambda^2+\eta_2)y_2-i(2\lambda^2+\xi_1)x_1-i(2\lambda^2+\xi_2)x_2}dx_1dx_2dy_1dy_2\vspace{0.05in}\\
&=-\frac{2i\pi}{(2\lambda^2+\xi_1)(4\lambda^2+\xi_1+\xi_2)(2\lambda^2-\eta_1+\xi_1+\xi_2)}\delta(\eta_1+\eta_2-\xi_1-\xi_2).
\end{aligned}
\end{align}

\begin{lemma}\label{le8}
We have the following identity.
\begin{align}\label{le8-1}
\begin{aligned}
b_4(\lambda)&=\frac{i}{2\pi}\int_{\xi_1+\xi_2=\eta_1+\eta_2}\frac{\lambda^4}{(2\lambda^2+\xi_1)(2\lambda^2+\eta_1)(2\lambda^2+\eta_2)}\vspace{0.05in}\\
&\quad\times \mathrm{Re}\left(\hat{q}(\eta_1)\hat{q}(\eta_2)\hat{q}^*(\xi_1)\hat{q}^*(\xi_2)\right)d\xi_1d\eta_1d\eta_2.
\end{aligned}
\end{align}

Suppose that $q$ is a Schwartz function. Then we have the following asymptotic series.
\bee
b_4(\lambda)\sim i\sum\limits_{j=2}^{\infty}H_{j4}\frac{\lambda^{2-2j}}{2^{j+1}},
\ene
where
\bee\no
H_{j4}=-\mathrm{Re}\left(i^j\sum\limits_{\alpha_1+\alpha_2+\alpha_3=j-2}(-1)^{\alpha_1}\int q^{(\alpha_2)}q^{(\alpha_3)}\overline{q^{(\alpha_3)}q}dx\right).
\ene
\end{lemma}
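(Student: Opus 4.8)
The plan is to establish the integral identity (\ref{le8-1}) by a Fourier-space reduction, and then read off the asymptotic series by expanding the resulting rational kernel in powers of $\lambda^{-2}$.

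First I would start from $b_4(\lambda)=-2\lambda^4XXYY$ together with its Fourier representation (\ref{b4-1}) and evaluate the nested spatial integral over the ordered region $x_1<x_2<y_1<y_2$. Since $\mathrm{Im}\,\lambda^2>0$, each of the four half-line integrals converges absolutely and is computed by elementary integration from the innermost variable outward; this produces exactly the kernel $K(\xi_1,\xi_2,\eta_1,\eta_2)$ in (\ref{b4-2}), carrying the three resolvent factors together with the momentum-conservation factor $\delta(\eta_1+\eta_2-\xi_1-\xi_2)$. Substituting $K$ into $b_4$ and integrating out $\xi_2$ against the delta, the prefactor $-\lambda^4/(2\pi^2)$ combines with the $-2\pi i$ inside $K$ to give $i\lambda^4/\pi$, and on the constraint surface $\xi_1+\xi_2=\eta_1+\eta_2$ the factor $2\lambda^2-\eta_1+\xi_1+\xi_2$ collapses to $2\lambda^2+\eta_2$ while $4\lambda^2+\xi_1+\xi_2$ becomes $4\lambda^2+\eta_1+\eta_2$.

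The remaining task for the identity is to convert the denominator $(2\lambda^2+\xi_1)(4\lambda^2+\eta_1+\eta_2)(2\lambda^2+\eta_2)$ into the symmetric product $(2\lambda^2+\xi_1)(2\lambda^2+\eta_1)(2\lambda^2+\eta_2)$ and to produce the real part. I would exploit the three symmetries of the configuration, each of which preserves the constraint and the measure $d\xi_1d\eta_1d\eta_2$: the swaps $\eta_1\leftrightarrow\eta_2$ and $\xi_1\leftrightarrow\xi_2$, under which $\hat q(\eta_1)\hat q(\eta_2)\hat q^*(\xi_1)\hat q^*(\xi_2)$ is invariant, and the involution exchanging the $\xi$-group with the $\eta$-group, realized as $(\xi_1,\eta_1,\eta_2)\mapsto(\eta_1,\xi_1,\eta_1+\eta_2-\xi_1)$ with Jacobian $\pm1$, under which that quartic product passes to its complex conjugate because $\hat q^*=\overline{\hat q}$. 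Symmetrizing over all three and using the partial-fraction identity $\tfrac{1}{2\lambda^2+\eta_1}+\tfrac{1}{2\lambda^2+\eta_2}=\tfrac{4\lambda^2+\eta_1+\eta_2}{(2\lambda^2+\eta_1)(2\lambda^2+\eta_2)}$ (and its $\xi$-analogue) cancels the stray factor $4\lambda^2+\eta_1+\eta_2$ and replaces the quartic product by its real part, landing precisely on (\ref{le8-1}) with the constant $i/(2\pi)$. For part two, assuming $q\in\mathcal S(\mathbb R)$ so that all moments of $\hat q$ are finite, I would expand each resolvent geometrically, $\tfrac{1}{2\lambda^2+\xi}=\sum_{m\ge0}(-1)^m\xi^m(2\lambda^2)^{-m-1}$; multiplying the three series by $\lambda^4$ gives a series in $\lambda^{-2}$ whose coefficient of $\lambda^{2-2j}$ gathers all multi-indices with $\alpha_1+\alpha_2+\alpha_3=j-2$ and carries the weight $\tfrac14(-1)^{|\alpha|}2^{-1-|\alpha|}=(-1)^j2^{-1-j}$. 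For each term I would return the momentum monomial to physical space via $\widehat{q^{(\alpha)}}=(i\xi)^{\alpha}\hat q$, so that $\xi^{\alpha}\hat q$ and $\xi^{\alpha}\hat q^*$ become $(-i)^{\alpha}q^{(\alpha)}$ and $i^{\alpha}q^{*(\alpha)}$, while the constraint $\xi_1+\xi_2=\eta_1+\eta_2$ turns the momentum integral into a single spatial integral $\int_{\mathbb R}q^{(\alpha_2)}q^{(\alpha_3)}\overline{q^{(\alpha_1)}q}\,dx$ by Plancherel, with a factor $2\pi$ that cancels the $1/(2\pi)$. Collecting the powers of $i$, namely $(-i)^{\alpha_2+\alpha_3}i^{\alpha_1}=(-i)^{j-2}(-1)^{\alpha_1}$ and then $(-1)^j(-i)^{j-2}=-i^j$, reproduces $H_{j4}$, the conjugated pair $\overline{q^{(\alpha_1)}q}$ carrying the index $\alpha_1$ tied to the surviving $\xi_1$-resolvent.

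The main obstacle I anticipate is not any single algebraic step but the bookkeeping. First, the symmetrization that manufactures the real part must be done carefully: the conjugation involution is a genuine symmetry only of the fully symmetric four-factor form, so one must symmetrize the $\xi$-pair before collapsing back to three factors, rather than applying the involution to the three-factor expression directly. Second, the expansion in part two is genuinely asymptotic, not convergent, since the geometric series fails for $|\xi|\ge|2\lambda^2|$; to make the series meaningful one must control the remainder after $N$ terms using the $L^2$-boundedness of the moments $\int|\xi|^{N}|\hat q(\xi)|^2\,d\xi$, in the same spirit as the estimate for $c_2$ in Theorem \ref{th5}. Keeping the phases $i^j$ and the combinatorial factor $2^{-1-j}$ straight is where errors are most likely to enter.
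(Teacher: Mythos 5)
Your proposal is correct and follows essentially the same route as the paper's proof: the same evaluation of the ordered spatial integral into the kernel $K(\xi_1,\xi_2,\eta_1,\eta_2)$ with the momentum-conservation delta, the same collapse of factors on the constraint surface followed by the partial-fraction identity and the $\xi$/$\eta$ symmetrization to produce the real part, and the same geometric resolvent expansion with conversion back to physical space to identify $H_{j4}$. Your added care at the symmetrization step (passing through the fully symmetric four-factor form before applying the conjugation involution) and your final integrand $q^{(\alpha_2)}q^{(\alpha_3)}\overline{q^{(\alpha_1)}q}$ --- which agrees with the paper's intermediate convolution formula $\widehat{\overline{q^{(\alpha_1)}}}*\widehat{\overline{q}}*\widehat{q^{(\alpha_2)}}*\widehat{q^{(\alpha_3)}}(0)$ and corrects the apparent typo $\overline{q^{(\alpha_3)}q}$ in the stated form of $H_{j4}$ --- only refine, rather than depart from, the paper's argument.
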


\begin{proof}
Substituting Eq.~(\ref{b4-2}) into Eq.~(\ref{b4-1}), yields
\begin{align}\no
\begin{aligned}
b_4(\lambda)&=-\frac{\lambda^4}{2\pi^2}\int_{\mathbb{R}^4}K(\xi_1,\xi_2,\eta_1,\eta_2)\hat{q}(\eta_1)\hat{q}(\eta_2)\hat{q}^*(\xi_1)\hat{q}^*(\xi_2)d\xi_1d\xi_2d\eta_1d\eta_2\vspace{0.05in}\\
&=\frac{i}{\pi}\int_{\mathbb{R}^4}\frac{\lambda^4\delta(\eta_1+\eta_2-\xi_1-\xi_2)}{(2\lambda^2+\xi_1)(4\lambda^2+\xi_1+\xi_2)(2\lambda^2-\eta_1+\xi_1+\xi_2)}\hat{q}(\eta_1)\hat{q}(\eta_2)\hat{q}^*(\xi_1)\hat{q}^*(\xi_2)d\xi_1d\xi_2d\eta_1d\eta_2\vspace{0.05in}\\
&=\frac{i}{2\pi}\int_{\mathbb{R}^3}\frac{\lambda^4}{(2\lambda^2+\xi_1)(2\lambda^2+\frac{\eta_1}{2}+\frac{\eta_2}{2})(2\lambda^2+\eta_2)}\hat{q}(\eta_1)\hat{q}(\eta_2)\hat{q}^*(\xi_1)\hat{q}^*(\eta_1+\eta_2-\xi_1)d\xi_1d\eta_1d\eta_2.
\end{aligned}
\end{align}

We note that
\begin{align}\no
\begin{aligned}
\frac12\left[\frac{1}{(2\lambda^2+\frac{\eta_1}{2}+\frac{\eta_2}{2})(2\lambda^2+\eta_1)}+\frac{1}{(2\lambda^2+\frac{\eta_1}{2}+\frac{\eta_2}{2})(2\lambda^2+\eta_2)}\right]
=\frac{1}{(2\lambda^2+\eta_1)(2\lambda^2+\eta_2)}.
\end{aligned}
\end{align}

We can take advantage of the symmetry between $\xi_1,\xi_2$ and $\eta_1,\eta_2$, then
\bee\no
b_4(\lambda)=\frac{i}{2\pi}\int_{\xi_1+\xi_2=\eta_1+\eta_2}\frac{\lambda^4}{(2\lambda^2+\xi_1)(2\lambda^2+\eta_1)(2\lambda^2+\eta_2)}\mathrm{Re}\left(\hat{q}(\eta_1)\hat{q}(\eta_2)\hat{q}^*(\xi_1)\hat{q}^*(\xi_2)\right)d\xi_1d\eta_1d\eta_2.
\ene

Expanding Eq.~(\ref{le8-1}) to the negative power, we have
\begin{align}\no
\begin{aligned}
b_4(\lambda)&\sim\frac{i}{2\pi}\int_{\xi_1+\xi_2=\eta_1+\eta_2}\frac{1}{8\lambda^2}\sum\limits_{j_1=0}^{\infty}(-\frac{\xi_1}{2\lambda^2})^{j_1}
\sum\limits_{j_2=0}^{\infty}(-\frac{\eta_1}{2\lambda^2})^{j_2}\sum\limits_{j_3=0}^{\infty}(-\frac{\eta_2}{2\lambda^2})^{j_3}\vspace{0.05in}\\
&\quad\times \mathrm{Re}\left(\hat{q}(\eta_1)\hat{q}(\eta_2)\hat{q}^*(\xi_1)\hat{q}^*(\xi_2)\right)d\xi_1d\eta_1d\eta_2.
\end{aligned}
\end{align}

Then, the corresponding coefficient of $i\frac{\lambda^{2-2j}}{2^{j+1}}$ as
\begin{align}\no
\begin{aligned}
H_{j4}:&=\frac{1}{2\pi}\mathrm{Re}\sum\limits_{\alpha_1+\alpha_2+\alpha_3=j-2}(-1)^j\int_{\xi_1+\xi_2=\eta_1+\eta_2}\xi_1^{\alpha_1}
\eta_1^{\alpha_2}\eta_2^{\alpha_3}\hat{q}^*(\xi_1)\hat{q}^*(\xi_2)\hat{q}(\eta_1)\hat{q}(\eta_2)d\xi_1d\eta_1d\eta_2\vspace{0.05in}\\
&=\frac{1}{2\pi}\mathrm{Re}\sum\limits_{\alpha_1+\alpha_2+\alpha_3=j-2}(-1)^ji^{2-j}\int_{\xi_1+\xi_2=\eta_1+\eta_2}(i\xi_1)^{\alpha_1}
(i\eta_1)^{\alpha_2}(i\eta_2)^{\alpha_3}\hat{q}^*(\xi_1)\hat{q}^*(\xi_2)\hat{q}(\eta_1)\hat{q}(\eta_2)d\xi_1d\eta_1d\eta_2\vspace{0.05in}\\
&=\frac{1}{2\pi}\mathrm{Re}\sum\limits_{\alpha_1+\alpha_2+\alpha_3=j-2}i^{j-2}(-1)^{\alpha_1}\widehat{\overline{q^{(\alpha_1)}}}*\widehat{\overline{q}}
*\widehat{q^{(\alpha_2)}}*\widehat{q^{(\alpha_3)}}(0)\vspace{0.05in}\\
&=-\mathrm{Re}\left(i^j\sum\limits_{\alpha_1+\alpha_2+\alpha_3=j-2}(-1)^{\alpha_1}\int q^{(\alpha_2)}q^{(\alpha_3)}\overline{q^{(\alpha_3)}q}dx\right).
\end{aligned}
\end{align}

Thus the proof is completed.
\end{proof}

As the same way, we will provide an asymptotic expression for $b_6(\lambda)$.
\begin{lemma}\label{le9}
We have the following identity.
\begin{align}\label{le9--1}
\begin{aligned}
b_6(\lambda)&=-\frac{i}{4\pi^2}\int_{\xi_1+\xi_2+\xi_3=\eta_1+\eta_2+\eta_3}\frac{\lambda^6}{(2\lambda^2+\xi_1)(2\lambda^2+\xi_2)(2\lambda^2+\eta_2)(2\lambda^2+\eta_3)}\vspace{0.05in}\\
&\quad\times \left(\frac{1}{2\lambda^2+\eta_1}+\frac{1}{2\lambda^2+\xi_1+\xi_2-\eta_1}\right)\vspace{0.05in}\\
&\quad\times \hat{q}(\xi_1)\hat{q}(\xi_2)\hat{q}(\xi_3)\hat{q}^*(\eta_1)\hat{q}^*(\eta_2)\hat{q}^*(\eta_3)d\xi_1d\xi_2d\eta_1d\eta_2\eta_3.
\end{aligned}
\end{align}

Suppose that $q$ is a Schwartz function. Then we have the following asymptotic series.
\bee\label{le9-0}
b_6(\lambda)\sim -i\sum\limits_{j=4}^{\infty}H_{j6}\frac{\lambda^{4-2j}}{2^{j+1}},
\ene
where
\begin{align}\no
\begin{aligned}
H_{j6}&=\mathrm{Re}\bigg(i^j\sum\limits_{\alpha_1+\alpha_2+\alpha_3+\alpha_4+\alpha_5=j-4}(-1)^{\alpha_1+\alpha_2}\int q^{(\alpha_1)}\vspace{0.05in}\\
&\quad \times q^{(\alpha_2)}q\overline{q^{(\alpha_3)}q^{(\alpha_4)}q^{(\alpha_5)}}+q^{(\alpha_1)}q^{(\alpha_2)}q^{*}(q\overline{q^{(\alpha_4)}q^{(\alpha_5)}})^{\alpha_3}dx\bigg).
\end{aligned}
\end{align}

\end{lemma}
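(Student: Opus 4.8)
The plan is to mimic the Fourier-analytic computation carried out for $b_4(\lambda)$ in Lemma~\ref{le8}. By Theorem~\ref{th3} I start from the representation $b_6(\lambda)=-4\lambda^6(XXYXYY+3XXXYYY)$, so $b_6$ is a sum of two six-fold iterated integrals over the ordered domains $x_1<x_2<y_1<x_3<y_2<y_3$ and $x_1<x_2<x_3<y_1<y_2<y_3$. First I would insert the Fourier representation (\ref{Fourier1}) of each factor $q(y_k)$ and $q^*(x_k)$, producing in the exponent a linear form $\sum_k i(2\lambda^2+\cdot)y_k-i(2\lambda^2+\cdot)x_k$ with a spectral variable attached to each of the six positions.

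Next, for each ordering separately I would integrate out the six spatial variables from the innermost to the outermost. For $\mathrm{Im}\,\lambda^2>0$ every integral $\int_{-\infty}^{(\cdot)}e^{i a s}\,ds$ converges and yields a factor $1/(ia)$ in which $a$ is the partial sum of $2\lambda^2$-shifted frequencies accumulated so far; the final integration over the outermost variable produces, via (\ref{Fourier3}), the Dirac constraint $\delta\big(\sum\xi_k-\sum\eta_k\big)$. This is the analogue of the kernel $K$ in (\ref{b4-2}), and it reduces each ordering to a three-fold frequency integral over the hyperplane $\xi_1+\xi_2+\xi_3=\eta_1+\eta_2+\eta_3$ weighted by a product of five linear denominators $2\lambda^2+(\cdots)$.

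I would then add the two contributions with the combinatorial weight $3$ on $XXXYYY$ and simplify. The two orderings differ only in the denominator coming from the position of the first $Y$ relative to the last $X$, and a partial-fraction identity of the same type used in Lemma~\ref{le8}---symmetrizing $\tfrac{1}{2\lambda^2+\eta_1}$ against $\tfrac{1}{2\lambda^2+\xi_1+\xi_2-\eta_1}$---collapses the sum into the single symmetric kernel displayed in (\ref{le9--1}); exploiting the symmetry among $(\xi_1,\xi_2,\xi_3)$ and among $(\eta_1,\eta_2,\eta_3)$ then lets me replace the integrand by its real part, giving the stated identity. For the asymptotic series (\ref{le9-0}), assuming $q$ Schwartz, I would expand each factor $\tfrac{1}{2\lambda^2+a}=\tfrac{1}{2\lambda^2}\sum_{m\ge0}(-a/2\lambda^2)^m$, so that the prefactor $\lambda^6/\prod(2\lambda^2+\cdot)$ becomes a power series whose leading term is of order $\lambda^{-4}$ (matching $j=4$). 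Collecting the coefficient of $\lambda^{4-2j}$ produces a finite sum over multi-indices $\alpha_1+\cdots+\alpha_5=j-4$ of monomials $\xi^{\alpha}\eta^{\alpha}\,\hat q\hat q\hat q\,\hat q^*\hat q^*\hat q^*$ integrated over the hyperplane; converting $(i\xi)^\alpha\hat q$ back to $\widehat{q^{(\alpha)}}$ and reading the hyperplane constraint as evaluation of an iterated convolution at $0$ turns these into the spatial integrals of products of derivatives of $q$ and $\bar q$ defining $H_{j6}$.

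The main obstacle will be the third step. Unlike $b_4$, which involved a single ordering $XXYY$, here the interleaved ordering $XXYXYY$ produces denominators whose partial frequency sums do not factor in the obvious way, and it is only after combining with the $3\,XXXYYY$ term and performing the right partial-fraction symmetrization that the clean symmetric kernel of (\ref{le9--1}) emerges. Keeping track of exactly which partial sums appear, and verifying that the two orderings recombine into the symmetric bracket $\tfrac{1}{2\lambda^2+\eta_1}+\tfrac{1}{2\lambda^2+\xi_1+\xi_2-\eta_1}$ with the correct constant $-\tfrac{i}{4\pi^2}$, is the delicate bookkeeping at the heart of the argument.
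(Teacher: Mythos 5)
Your proposal follows essentially the same route as the paper's own proof: starting from $b_6(\lambda)=-4\lambda^6(XXYXYY+3XXXYYY)$, inserting the Fourier representations, integrating out the ordered spatial variables to obtain kernels with five linear denominators and the delta constraint, applying the partial-fraction symmetrization identities to recombine the two orderings into the bracket $\frac{1}{2\lambda^2+\eta_1}+\frac{1}{2\lambda^2+\xi_1+\xi_2-\eta_1}$, and then expanding in negative powers of $\lambda^2$ to read off $H_{j6}$. Your identification of the recombination of the two orderings as the delicate step is exactly where the paper's computation does its work, so the plan is sound and matches the published argument.
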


\begin{proof}
According to Theorem \ref{th3}, we have
\bee\label{le9-1}
b_6(\lambda)=-4\lambda^6(XXYXYY+3XXXYYY).
\ene

We will calculate the two terms on the right side of Eq.~(\ref{le9-1}) respectively.
\begin{align}\label{le9-2}
\begin{aligned}
-4\lambda^6XXYXYY&=-4\lambda^6\int_{x_1<x_2<y_1<x_3<y_2<y_3}q(y_1)q^*(x_1)q(y_2)q^*(x_2)q(y_3)q^*(x_3)\vspace{0.05in}\\
&\quad \times e^{2i\lambda^2(y_1+y_2+y_3-x_1-x_2-x_3)}dx_1dy_1dx_2dy_2dx_3dy_3\vspace{0.05in}\\
&=-\frac{\lambda^6}{2\pi^3}\int_{\mathbb{R}^6}\left(\int_{x_1<x_2<y_1<x_3<y_2<y_3}e^{\sum\limits_{j=1}^{3}i(2\lambda^2+\eta_j)y_j-\sum\limits_{k=1}^{3}i(2\lambda^2+\xi_k)x_k}dx_1dx_2dx_3dy_1dy_2dy_3\right)\vspace{0.05in}\\
&\quad\times \hat{q}(\eta_1)\hat{q}(\eta_2)\hat{q}(\eta_3)\hat{q}^*(\xi_1)\hat{q}^*(\xi_2)\hat{q}^*(\xi_3)d\xi_1d\xi_2d\xi_3d\eta_1d\eta_2d\eta_3.
\end{aligned}
\end{align}

Let
\begin{align}\label{le9-3}
\begin{aligned}
K(\xi_1,\xi_2,\xi_3,\eta_1,\eta_2,\eta_3):&=\int_{x_1<x_2<y_1<x_3<y_2<y_3}e^{\sum\limits_{j=1}^{3}i(2\lambda^2+\eta_j)y_j-\sum\limits_{k=1}^{3}i(2\lambda^2+\xi_k)x_k}dx_1dx_2dx_3dy_1dy_2dy_3\vspace{0.05in}\\
&=\int_{-\infty}^{+\infty}\int_{-\infty}^{y_3}\int_{-\infty}^{y_2}\int_{-\infty}^{x_3}\int_{-\infty}^{y_1}\int_{-\infty}^{x_2}
e^{\sum\limits_{j=1}^{3}i(2\lambda^2+\eta_j)y_j-\sum\limits_{k=1}^{3}i(2\lambda^2+\xi_k)x_k}dx_1dx_2dx_3dy_1dy_2dy_3\vspace{0.05in}\\
&=2\pi i\frac{1}{2\lambda^2+\xi_1}\frac{1}{4\lambda^2+\xi_1+\xi_2}\frac{1}{2\lambda^2+\xi_1+\xi_2-\eta_1}\frac{1}{4\lambda^2+\xi_1+\xi_2+\xi_3-\eta_1}\vspace{0.05in}\\
&\quad \times \frac{1}{2\lambda^2+\xi_1+\xi_2+\xi_3-\eta_1-\eta_2}\delta(\eta_1+\eta_2+\eta_3-\xi_1-\xi_2-\xi_3).
\end{aligned}
\end{align}

Substituting Eq.~(\ref{le9-3}) into Eq.~(\ref{le9-2}), yields
\begin{align}
\begin{aligned}
-4\lambda^6XXYXYY&=-\frac{\lambda^6}{2\pi^3}\int_{\mathbb{R}^6}K(\xi_1,\xi_2,\xi_3,\eta_1,\eta_2,\eta_3)\vspace{0.05in}\\
&\quad\times \hat{q}(\eta_1)\hat{q}(\eta_2)\hat{q}(\eta_3)\hat{q}^*(\xi_1)\hat{q}^*(\xi_2)\hat{q}^*(\xi_3)d\xi_1d\xi_2d\xi_3d\eta_1d\eta_2d\eta_3\vspace{0.05in}\\
&=-\frac{i\lambda^6}{4\pi^2}\int_{\xi_1+\xi_2+\xi_3=\eta_1+\eta_2+\eta_3}\frac{1}{2\lambda^2+\xi_1}\frac{1}{2\lambda^2+\frac12\xi_1+\frac12\xi_2}\frac{1}{2\lambda^2+\xi_1+\xi_2-\eta_1}
\vspace{0.05in}\\
&\quad\times \frac{1}{2\lambda^2+\frac12\eta_2+\frac12\eta_3}\frac{1}{2\lambda^2+\eta_3} \hat{q}(\eta_1)\hat{q}(\eta_2)\hat{q}(\eta_3)\hat{q}^*(\xi_1)\hat{q}^*(\xi_2)\hat{q}^*(\xi_3)d\xi_1d\xi_2d\eta_1d\eta_2d\eta_3\vspace{0.05in}\\
&=-\frac{i\lambda^6}{4\pi^2}\int_{\xi_1+\xi_2+\xi_3=\eta_1+\eta_2+\eta_3}\frac{1}{2\lambda^2+\xi_1}\frac{1}{2\lambda^2+\xi_2}\frac{1}{2\lambda^2+\xi_1+\xi_2-\eta_1}
\frac{1}{2\lambda^2+\eta_2}\vspace{0.05in}\\
&\quad\times \frac{1}{2\lambda^2+\eta_3} \hat{q}(\eta_1)\hat{q}(\eta_2)\hat{q}(\eta_3)\hat{q}^*(\xi_1)\hat{q}^*(\xi_2)\hat{q}^*(\xi_3)d\xi_1d\xi_2d\eta_1d\eta_2d\eta_3.
\end{aligned}
\end{align}

On the other hand,
\begin{align}\label{le9-4}
\begin{aligned}
-12\lambda^6XXXYYY&=-12\lambda^6\int_{x_1<x_2<x_3<y_1<y_2<y_3}q(y_1)q^*(x_1)q(y_2)q^*(x_2)q(y_3)q^*(x_3)\vspace{0.05in}\\
&\quad \times e^{2i\lambda^2(y_1+y_2+y_3-x_1-x_2-x_3)}dx_1dy_1dx_2dy_2dx_3dy_3\vspace{0.05in}\\
&=-\frac{3\lambda^6}{2\pi^3}\int_{\mathbb{R}^6}\left(\int_{x_1<x_2<x_3<y_1<y_2<y_3}e^{\sum\limits_{j=1}^{3}i(2\lambda^2+\eta_j)y_j-\sum\limits_{k=1}^{3}i(2\lambda^2+\xi_k)x_k}dx_1dx_2dx_3dy_1dy_2dy_3\right)\vspace{0.05in}\\
&\quad\times \hat{q}(\eta_1)\hat{q}(\eta_2)\hat{q}(\eta_3)\hat{q}^*(\xi_1)\hat{q}^*(\xi_2)\hat{q}^*(\xi_3)d\xi_1d\xi_2d\xi_3d\eta_1d\eta_2d\eta_3
\end{aligned}
\end{align}

Let
\begin{align}\label{le9-5}
\begin{aligned}
H(\xi_1,\xi_2,\xi_3,\eta_1,\eta_2,\eta_3):&=\int_{x_1<x_2<x_3<y_1<y_2<y_3}e^{\sum\limits_{j=1}^{3}i(2\lambda^2+\eta_j)y_j-\sum\limits_{k=1}^{3}i(2\lambda^2+\xi_k)x_k}dx_1dx_2dx_3dy_1dy_2dy_3\vspace{0.05in}\\
&=\int_{-\infty}^{+\infty}\int_{-\infty}^{y_3}\int_{-\infty}^{y_2}\int_{-\infty}^{y_1}\int_{-\infty}^{x_3}\int_{-\infty}^{x_2}
e^{\sum\limits_{j=1}^{3}i(2\lambda^2+\eta_j)y_j-\sum\limits_{k=1}^{3}i(2\lambda^2+\xi_k)x_k}dx_1dx_2dx_3dy_1dy_2dy_3\vspace{0.05in}\\
&=2\pi i\frac{1}{2\lambda^2+\xi_1}\frac{1}{4\lambda^2+\xi_1+\xi_2}\frac{1}{6\lambda^2+\xi_1+\xi_2+\xi_3}\frac{1}{4\lambda^2+\xi_1+\xi_2+\xi_3-\eta_1}\vspace{0.05in}\\
&\quad \times \frac{1}{2\lambda^2+\xi_1+\xi_2+\xi_3-\eta_1-\eta_2}\delta(\eta_1+\eta_2+\eta_3-\xi_1-\xi_2-\xi_3).
\end{aligned}
\end{align}

Substituting Eq.~(\ref{le9-5}) into Eq.~(\ref{le9-4}) yields
\begin{align}
\begin{aligned}
-12\lambda^6XXXYYY&=-\frac{3\lambda^6}{2\pi^3}\int_{\mathbb{R}^6}H(\xi_1,\xi_2,\xi_3,\eta_1,\eta_2,\eta_3)\vspace{0.05in}\\
&\quad\times \hat{q}(\eta_1)\hat{q}(\eta_2)\hat{q}(\eta_3)\hat{q}^*(\xi_1)\hat{q}^*(\xi_2)\hat{q}^*(\xi_3)d\xi_1d\xi_2d\xi_3d\eta_1d\eta_2d\eta_3\vspace{0.05in}\\
&=-\frac{i\lambda^6}{4\pi^2}\int_{\xi_1+\xi_2+\xi_3=\eta_1+\eta_2+\eta_3}\frac{1}{2\lambda^2+\xi_1}\frac{1}{2\lambda^2+\frac12\xi_1
+\frac12\xi_2}\frac{1}{2\lambda^2+\frac13(\xi_1+\xi_2+\xi_3)}
\vspace{0.05in}\\
&\quad\times \frac{1}{2\lambda^2+\frac12\eta_2+\frac12\eta_3}\frac{1}{2\lambda^2+\eta_3} \hat{q}(\eta_1)\hat{q}(\eta_2)\hat{q}(\eta_3)\hat{q}^*(\xi_1)\hat{q}^*(\xi_2)\hat{q}^*(\xi_3)d\xi_1d\xi_2d\eta_1d\eta_2d\eta_3\vspace{0.05in}\\
&=-\frac{i\lambda^6}{4\pi^2}\int_{\xi_1+\xi_2+\xi_3=\eta_1+\eta_2+\eta_3}\frac{1}{2\lambda^2+\xi_1}\frac{1}{2\lambda^2+\xi_2}\frac{1}{2\lambda^2+\eta_1}
\frac{1}{2\lambda^2+\eta_2}\vspace{0.05in}\\
&\quad\times \frac{1}{2\lambda^2+\eta_3} \hat{q}(\eta_1)\hat{q}(\eta_2)\hat{q}(\eta_3)\hat{q}^*(\xi_1)\hat{q}^*(\xi_2)\hat{q}^*(\xi_3)d\xi_1d\xi_2d\eta_1d\eta_2d\eta_3,
\end{aligned}
\end{align}
since
\begin{align}\no
\begin{aligned}
&\frac13\bigg(\frac{1}{(2\lambda^2+\eta_1)(2\lambda^2+\eta_2)}+\frac{1}{(2\lambda^2+\eta_1)(2\lambda^2+\eta_3)}+
\frac{1}{(2\lambda^2+\eta_2)(2\lambda^2+\eta_3)}\bigg)\frac{1}{2\lambda^2
+\frac13(\eta_1+\eta_2+\eta_3)}\vspace{0.05in}\\
=&\frac{1}{(2\lambda^2+\eta_1)(2\lambda^2+\eta_2)(2\lambda^2+\eta_3)}.
\end{aligned}
\end{align}

Expanding Eq.~(\ref{le9--1}) to the negative power, we can obtain Eq.~(\ref{le9-0}). Thus the proof is completed.

\end{proof}

Based on the above analysis, we provide an asymptotic estimate of $b_{2j}$.
\begin{lemma}\label{le10}
The following estimate holds:
\bee\no
b_{2j}(\lambda)\sim\mathcal{O}(\lambda^{-2j+2}),\quad j\geq2.
\ene
\end{lemma}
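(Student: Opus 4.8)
The plan is to reduce the general case back to the computations already carried out for $b_4$ and $b_6$ in Lemmas~\ref{le8} and \ref{le9}, and to read off the leading power of $\lambda$ directly from the pole structure those computations expose. By Theorem~\ref{th3} together with Lemma~\ref{le2}, $b_{2j}(\lambda)=(-1)^j\lambda^{2j}$ times a \emph{finite} linear combination of \emph{connected} iterated integrals, each of the form $\int_{\Sigma_j}\prod_{k=1}^j q(y_k)q^*(x_k)e^{2i\lambda^2(y_k-x_k)}\,dx_1dy_1\cdots dx_jdy_j$, where $\Sigma_j$ is an ordering of the $2j$ variables obeying $x_k<y_k$ in which the first $X$ is paired with the last $Y$. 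It therefore suffices to bound a single connected integral and then sum the finitely many contributions.

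First I would Fourier transform an arbitrary connected integral exactly as in the derivations of $K$ in Eqs.~(\ref{b4-2}) and (\ref{le9-3}). Writing the $2j$ ordered variables as $z_1<z_2<\cdots<z_{2j}$ (each $z$ being one of the $x_k$ or $y_k$) and assigning to each $x_k$ the phase weight $-i(2\lambda^2+\xi_k)$ and to each $y_k$ the weight $+i(2\lambda^2+\eta_k)$, the nested integration from $z_1$ outward produces one denominator at each of the steps $z_1,\dots,z_{2j-1}$, while the final integration of $z_{2j}$ over all of $\mathbb{R}$ yields $2\pi\,\delta\!\left(\sum_k\eta_k-\sum_k\xi_k\right)$. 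Thus a connected integral equals a constant times
\bee\no
\int_{\sum\eta=\sum\xi}\frac{\hat q(\eta_1)\cdots\hat q(\eta_j)\,\hat q^*(\xi_1)\cdots\hat q^*(\xi_j)}{\prod_{m=1}^{2j-1}D_m(\lambda;\xi,\eta)}\,d\xi\,d\eta,
\ene
where $D_m$ is the partial sum of the first $m$ phase weights with the factor $i$ stripped off.

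The heart of the argument is the leading $\lambda^2$--behaviour of each $D_m$. If among $z_1,\dots,z_m$ there are $p_m$ symbols of type $X$ and $q_m$ symbols of type $Y$, then the coefficient of $\lambda^2$ in $D_m$ is exactly $2(p_m-q_m)$ (one checks against the explicit $K$ of Eqs.~(\ref{b4-2}) and (\ref{le9-3}) that this matches). The constraint $x_k<y_k$ forces the word $z_1\cdots z_{2j}$ to be balanced, so $p_m\ge q_m$ for every $m$; crucially, connectedness---the first $X$ pairing with the last $Y$---is precisely the statement that the word is \emph{irreducible}, i.e. $p_m>q_m$ strictly for every proper prefix $1\le m\le 2j-1$. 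Hence each of the $2j-1$ denominators has a nonvanishing leading coefficient, $D_m=2(p_m-q_m)\lambda^2+\ell_m(\xi,\eta)$ with $p_m-q_m\ge 1$ and $\ell_m$ linear in the frequencies. The hard part will be establishing this prefix inequality cleanly, i.e. making rigorous the dictionary between the pairing/connectedness language of Section~4 and the strict positivity $p_m>q_m$ of all proper prefix heights; this is exactly where a disconnected word such as $XYXY$ fails (its middle prefix has $p_m=q_m$, its $\lambda^2$ term drops out, and the resulting decay is slower), and it is the Hopf-algebra reduction of Lemma~\ref{le2} that guarantees such terms are absent from $b_{2j}$.

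Finally, expanding each factor $1/D_m=(2(p_m-q_m)\lambda^2)^{-1}\big(1+\ell_m/(2(p_m-q_m)\lambda^2)\big)^{-1}$ in inverse powers of $\lambda^2$ and integrating term by term---justified for Schwartz $q$ since $\hat q\in\mathcal{S}(\mathbb{R})$ renders the frequency integral absolutely convergent, just as in Lemmas~\ref{le8} and \ref{le9}---the product of the $2j-1$ leading factors is of order $\lambda^{-2(2j-1)}$. Multiplying by the prefactor $\lambda^{2j}$ gives each connected integral to order $\lambda^{2j-2(2j-1)}=\lambda^{-2j+2}$, and summing the finitely many connected pieces preserves this order. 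This yields $b_{2j}(\lambda)\sim\mathcal{O}(\lambda^{-2j+2})$ for $j\ge 2$, consistent with the explicit leading terms $\lambda^{-2}$ and $\lambda^{-4}$ found for $b_4$ and $b_6$.
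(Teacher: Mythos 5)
Your proposal is correct, and its skeleton---reduce $b_{2j}$ via the Hopf-algebra result (Lemma~\ref{le2}) to a finite linear combination of connected integrals, then read the decay off the structure of a single connected integral---is the same as the paper's. Where you genuinely diverge is in how the decay of a connected integral is actually established: the paper's proof of Lemma~\ref{le10} leaves that step implicit (``the properties of connected integrals''), and when the paper does prove general asymptotics for connected integrals (Theorem~\ref{th9}) it works in \emph{physical} space, Taylor-expanding $q$ and $q^*$ about $x_1$ and integrating the exponentials by parts repeatedly as in (\ref{th9-1})--(\ref{th9-3}), which yields the expansion $T_{\Sigma_j}(\lambda)\sim\sum_{l}T_{\Sigma_j}^{l}2^{1-2j-l}\lambda^{-(2j-2+2l)}$ whose leading order is exactly your $\lambda^{-2j+2}$. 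You instead generalize the frequency-space computations of Lemmas~\ref{le8} and \ref{le9}: nested integration produces $2j-1$ denominators plus a delta function, and the coefficient of $\lambda^2$ in the $m$-th denominator is $2(p_m-q_m)$, the prefix height of the word. Your dictionary ``connected $\Longleftrightarrow$ $p_m>q_m$ for every proper prefix'' is the right one---it is the standard characterization of irreducible Dyck words under the nested pairing the paper intends, it checks against the words $XXYY$, $XXYXYY$, $XXXYYY$ of Theorem~\ref{th3}, and it matches the explicit denominator coefficients $2,4,2,4,2$ and $2,4,6,4,2$ in (\ref{le9-3}) and (\ref{le9-5})---and it makes the cancellation mechanism transparent: a disconnected word such as $XYXY$ has a prefix of height zero, one denominator loses its $\lambda^2$ term, and the decay degrades, which is precisely why such words must be absent from $\ln s_{11}$. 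What your route buys is a uniform, checkable power-counting criterion and an easy $\mathcal{O}$-bound (on $\lambda^2=i\zeta$ one has $|D_m|\geq 2(p_m-q_m)\zeta$, so for the stated estimate no series expansion is even required); what the paper's physical-space route buys is explicit formulas for all higher coefficients $T_{\Sigma_j}^{l}$ together with error bounds in the $l^p_{\sigma}DU^2$ scale, which the low-regularity theory of Section~8 needs and which your Schwartz-class argument does not provide.
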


\begin{proof}
Based on the properties of the Hopf algebra which we constructed earlier, we know that $b_{2j}(\lambda)'s$ are formal linear combinations of connected integrals. Then we obtain this lemma from the properties of connected integrals.
\end{proof}

We recall that
\bee
s_2(\lambda)=-\frac{i}{2}||q(x)||^2_{L^2}+i\int_{-\infty}^{+\infty}\frac{\xi}{4\lambda^2+2\xi}|\hat{q}(\xi)|^2d\xi.
\ene

According to Lemma \ref{le10}, we can obtain Eq.~(\ref{coro1-1}) and (\ref{coro1-2}). Then, Theorem \ref{1-th3} has been proven.

\section{Expansions for the iterative integrals $b_{2j}(\lambda)$ }

The overall properties of $s_{2j}(\lambda)$ and $b_{2j}(\lambda)$ were given in the previous section, and the properties of $b_{2j}(\lambda)$ need to be considered separately in this section.

\begin{lemma}\label{le11}
$b_{2j}$ have the following estimation:
\bee
|\lambda^{-2j}b_{2j}(\lambda)|\lesssim||e^{i\mathrm{Re}\lambda^2x}q||_{l_{\mathrm{Im}\lambda^2}^{2j}DU^2}^{2j}.
\ene
\end{lemma}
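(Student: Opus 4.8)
The plan is to upgrade the $l^2_{\mathrm{Im}\lambda^2}DU^2$ bound for $b_{2j}(\lambda)$ already contained in Theorem \ref{th6} to the sharper $l^{2j}_{\mathrm{Im}\lambda^2}DU^2$ bound claimed here. The gain is genuine, since $\|\cdot\|_{l^{2j}_\sigma DU^2}\le\|\cdot\|_{l^2_\sigma DU^2}$, and it cannot be extracted from $s_{2j}(\lambda)$ alone: a disconnected integral factors into independent connected pieces, each of which must carry its own localization, so one can only pool all $2j$ factors of $q,q^{*}$ into a single summation when the integral is \emph{connected}. The decisive structural input is therefore the Hopf-algebra analysis of Section 4, by which $b_{2j}(\lambda)$ is a formal linear combination of connected integrals (Lemma \ref{le2} and the remark following it), exactly as in the Koch--Tataru treatment of the NLS equation~\cite{dnls-Koch}.

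First I set $\sigma=\mathrm{Im}\lambda^2>0$ and, after the normalization $\lambda^2\mapsto\mathrm{Re}\lambda^2+i\sigma$ used in Lemmas \ref{le5} and \ref{le7}, absorb the oscillation $e^{i\mathrm{Re}\lambda^2x}$ into the potential, so that the relevant norm is that of $q_\sigma:=e^{i\mathrm{Re}\lambda^2x}q$. I then decompose $q_\sigma=\sum_k q_{\sigma,k}$ into the $\sigma$-localized slices $q_{\sigma,k}=\chi_{[k/\sigma,(k+1)/\sigma]}q_\sigma$ defining the $l^p_\sigma DU^2$ norms. Each elementary factor $e^{2i\lambda^2(y-x)}$ has modulus $e^{-2\sigma(y-x)}$; for a connected integral the first $X$ is paired with the last $Y$, so the outermost pair contributes $e^{-2\sigma(y_{\max}-x_{\min})}$, producing exponential decay in the distance between the extreme localization intervals. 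Iterating the one-step operator $L$ through the bilinear estimate of Lemma \ref{le4}(c) and the $U^2\to U^2$ bound of Lemma \ref{le7}, applied slice by slice, yields for fixed localization indices $k_1,\dots,k_{2j}$ a contribution bounded by $w(k_1,\dots,k_{2j})\prod_{i=1}^{2j}\|q_{\sigma,k_i}\|_{DU^2}$, where the weight $w$ inherits from the outer exponential an off-diagonal decay of the form $\exp(-c\,|k_i-k_{i'}|)$ tying the indices together.

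Finally I sum over the localization indices by Hölder's inequality in a single variable with $2j$ equal factors:
\[
\sum_{k_1,\dots,k_{2j}} w\,\prod_{i=1}^{2j}\|q_{\sigma,k_i}\|_{DU^2}\ \le\ \prod_{i=1}^{2j}\Big(\sum_{k_1,\dots,k_{2j}} w\,\|q_{\sigma,k_i}\|_{DU^2}^{2j}\Big)^{1/(2j)} .
\]
For each factor the complementary indices are summed against the Schur weight, which by the off-diagonal decay of $w$ contributes a constant uniform in $\lambda$, leaving $\sum_{k}\|q_{\sigma,k}\|_{DU^2}^{2j}=\|q_\sigma\|_{l^{2j}_\sigma DU^2}^{2j}$; this produces precisely the exponent $2j$ in the localization space and gives the asserted bound after restoring the factor $\lambda^{-2j}$. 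The main obstacle is the Schur step: one must verify that the exponential decay supplied by the connecting outer pair dominates the combinatorial multiplicity of the admissible connected domains $\Sigma_j$ and the cascade of intermediate indices, so that $\sup_{k_i}\sum_{\text{others}}w<\infty$ with a constant compatible with the implicit $\lesssim$. Establishing this uniform confinement of all $2j$ factors to comparable $\sigma$-intervals is the crux, and it is exactly where connectedness is indispensable.
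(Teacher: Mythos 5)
Your proposal is correct in substance but it is a genuinely different --- and in fact more complete --- route than what the paper offers. The paper's entire proof of Lemma \ref{le11} is the single sentence that it is ``a direct consequence of Theorem \ref{th6}.'' Taken literally this cannot work, for exactly the reason you identify at the outset: Theorem \ref{th6} controls $|\lambda^{-2j}b_{2j}(\lambda)|$ by $\|e^{-i\mathrm{Re}\lambda^2 x}q\|_{l^2_{\mathrm{Im}\lambda^2}DU^2}^{2j}$, and since $\|u\|_{l^{2j}_{\sigma}DU^2}\leq\|u\|_{l^{2}_{\sigma}DU^2}$ the bound asserted in Lemma \ref{le11} is strictly \emph{stronger}, so it does not follow by monotonicity or by the majorization trick $f(f(C_1 z))$ used in Theorem \ref{th6}, whose one-step input (Lemma \ref{le7}) is only stated in $l^2_{\sigma}DU^2$. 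Your argument supplies precisely the missing mechanism: connectedness of the integrals constituting $b_{2j}$ (Section 4) forces the outer paired factor to contribute $e^{-2\sigma(y_{\max}-x_{\min})}$, tying all $2j$ localization slices together, after which H\"older with $2j$ equal factors and Schur summation against the exponentially off-diagonal weight converts the slice-wise $DU^2$ bounds into the $l^{2j}_{\sigma}DU^2$ norm. This is the Koch--Tataru scheme~\cite{dnls-Koch}, and it is tacitly the machinery the paper itself deploys later for connected integrals in Theorem \ref{th9}, where slice-by-slice estimates of the form $\|q_1q_2\|_{l^p_{\zeta^2}DU^2}\lesssim\|q_1\|_{l^q_{\zeta^2}V^2}\|q_2\|_{l^r_{\zeta^2}DU^2}$ with $\frac1p=\frac1q+\frac1r$ play exactly the role of your H\"older step; so your route is consistent with, and better documented than, the paper's. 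One caveat: your crux --- the uniform Schur bound $\sup_{k_i}\sum_{\mathrm{others}}w<\infty$ with a constant whose growth in $j$ (from the multiplicity of connected domains $\Sigma_j$ and the cascade of intermediate indices) stays at most exponential, so as to be absorbed into the power-series/majorization framework --- is flagged but not carried out. That verification is genuinely needed for a complete proof; it is, however, exactly the point on which the paper is silent as well, so your proposal identifies and localizes the real content of the lemma rather than missing it.
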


\begin{proof}
The proof is a direct consequence of Theorem \ref{th6}.
\end{proof}

\begin{theorem}\label{th75}
Suppose that $q\in H^s$. Then we have
\bee\no
|b_{2j}(e^{\frac{i\pi}{4}}\sqrt{\zeta})|\lesssim\zeta^{j-2s-1}||q||^2_{H^s}||q||_{l_{1}^{2}DU^2}^{2j-2},\quad s\leq j-1.
\ene
and
\begin{align}\no
\begin{aligned}
\int_{1}^{\infty}&\zeta^{2s-j}|b_{2j}(e^{\frac{i\pi}{4}}\sqrt{\frac{\zeta}{2}})|d\zeta
\lesssim2^{-j}(1+\frac{1}{j-1-s})||q||_{H^s}^2||q||_{l_1^2DU^2}^{2j-2},\quad 0\leq s<j-1.
\end{aligned}
\end{align}

\end{theorem}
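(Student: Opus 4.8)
The plan is to run the argument of Theorem~\ref{th7} essentially verbatim, but to feed it the sharper \emph{connectedness} estimate of Lemma~\ref{le11} in place of the starting point of Theorem~\ref{th6}. The decisive difference is that Lemma~\ref{le11} controls $b_{2j}$ by the $l^{2j}$-based norm $\|e^{i\mathrm{Re}\lambda^2 x}q\|_{l^{2j}_{\mathrm{Im}\lambda^2}DU^2}$ rather than the $l^2$-based norm that applies to $s_{2j}$; since $\|\cdot\|_{l^{2j}}\le\|\cdot\|_{l^2}$, this is a genuine gain and is exactly what will widen the admissible range from $s\le\frac{j-1}{2}$ to $s\le j-1$. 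First I would specialize Lemma~\ref{le11} to $\lambda=e^{i\pi/4}\sqrt{\zeta}$ (and to $\lambda=e^{i\pi/4}\sqrt{\zeta/2}$ for the integrated statement), so that $\mathrm{Re}\,\lambda^2=0$, $\mathrm{Im}\,\lambda^2=\zeta$, and $|\lambda^{2j}|=\zeta^{j}$; this yields the seed bound
\[
|b_{2j}(e^{i\pi/4}\sqrt{\zeta})|\lesssim \zeta^{j}\,\|q\|_{l^{2j}_{\zeta}DU^2}^{2j}.
\]

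Next I would carry out a dyadic Littlewood--Paley decomposition $q=\sum_k q_k$ and distribute the $2j$ copies of $q$ so that the two highest-frequency factors absorb the two powers $\|q\|_{H^s}$, while the remaining $2j-2$ factors are controlled by the low-regularity norm $\|q\|_{l_1^2DU^2}$. The translation between these norms is supplied by Lemma~\ref{le6}: the frequency-localized bounds (\ref{th7-1}) from the proof of Theorem~\ref{th7} turn each $\|q_k\|_{l^{2j}_\zeta DU^2}$ into a power of the ratio $k/\zeta$ times either $\|q\|_{H^s}$ or $\|q\|_{l_1^2DU^2}$, and part~(d) of Lemma~\ref{le6} furnishes the rescaling in the parameter $\tau$. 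The crucial point is that, because $b_{2j}$ is a connected integral, the exponent $p=2j$ may be used throughout instead of $p=2$; the scaling weight $(\tau_2/\tau_1)^{1-1/p}$ is thereby pushed from $\frac12$ toward $1$, so the geometric series in the frequency ratio converges under the weaker condition $j-1-s>0$ rather than $j-1-2s>0$, and the single critical denominator $j-1-s$ replaces the pair $j-1-2s,\,(2s+1)^2$ of Theorem~\ref{th7}.

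The pointwise estimate then follows by summing the dyadic contributions at fixed $\zeta$ and collecting the overall power $\zeta^{\,j-2s-1}$, which is valid in the full range $s\le j-1$. The integrated estimate cannot be obtained by merely inserting the pointwise bound (that would leave a non-integrable $\zeta^{-1}$); instead I would multiply the frequency-resolved estimate by $\zeta^{2s-j}$, integrate over $\zeta\in[1,\infty)$, interchange the $\zeta$-integration with the sums over $k_1\ge k_2$, and estimate the resulting double sum by the Cauchy--Schwarz inequality together with Schur's lemma, exactly as in the closing step of Theorem~\ref{th7}. Convergence in $\zeta$ at infinity then holds precisely for $s<j-1$ and produces the coefficient $\frac{1}{j-1-s}$; the extra decay $b_{2j}=\mathcal{O}(\lambda^{-2j+2})$ recorded in Lemma~\ref{le10} is the qualitative shadow of this gain, since connectedness lowers the leading power in $\lambda$ and is what renders the weighted integral finite up to $s=j-1$. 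Keeping track of the factor $\zeta/2$ coming from the choice $\lambda=e^{i\pi/4}\sqrt{\zeta/2}$ through all the rescalings is what yields the constant $2^{-j}$.

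I expect the main obstacle to be bookkeeping rather than a conceptual hurdle. The delicate step is to verify that connectedness genuinely licenses the replacement of $l^2$ by $l^{2j}$ \emph{uniformly} over the way the $2j$ frequency factors are distributed, and then to track how the dyadic summation over $k_1\ge k_2$ and over the scales $\tau$ collapses to the single critical factor $\frac{1}{j-1-s}$. Once that summation is organized correctly, both inequalities fall out of the Schur/Cauchy--Schwarz machinery already developed for Theorem~\ref{th7}.
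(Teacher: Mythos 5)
Your proposal is correct and follows essentially the same route as the paper's own proof: it starts from the connectedness bound of Lemma~\ref{le11} with the $l^{2j}_{\zeta}DU^2$ norm at $\lambda=e^{i\pi/4}\sqrt{\zeta}$, runs the Littlewood--Paley decomposition of Theorem~\ref{th7} with two high-frequency factors absorbed into $\|q\|_{H^s}^2$ and the remaining $2j-2$ into $\|q\|_{l_1^2DU^2}$, and closes the weighted integral with Cauchy--Schwarz and Schur's lemma, producing the single critical coefficient $\frac{1}{j-1-s}$ on the range $s<j-1$. You even correctly identify the mechanism (replacing $p=2$ by $p=2j$ in the frequency-localized transfer estimates, which shifts the scaling weight $(\tau_2/\tau_1)^{1-1/p}$) that the paper exploits implicitly through its bounds on $\|q_k\|_{l^{2j}_{\zeta}DU^2}$ and $\|q_{\leq k}\|_{l^{2j}_{\zeta}DU^2}$.
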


\begin{proof}
Firstly, we have
\bee
|b_{2j}(e^{\frac{i\pi}{4}}\sqrt{\zeta})|\lesssim\zeta^{j}\sum_{k_1\geq k_2}||q_{k_1}||_{l_{\zeta}^{2j}DU^2}||q_{k_2}||_{l_{\zeta}^{2j}DU^2}
||q_{\leq k_2}||^{2j-2}_{l_{\zeta}^{2j}DU^2}.
\ene

A proof method similar to Theorem \ref{th7}, first of all, if $1\leq k<\zeta$, then we have
\bee\label{th75-3}
||q_k||_{l^{2j}_{\zeta}DU^2}\lesssim k^{\frac12-s-\frac{1}{2j}}\zeta^{\frac{1}{j}-2}||q||_{H^s}.
\ene
and
\bee\label{th75-4}
||q_{\leq k}||_{l_{\zeta}^{2j}DU^2}\lesssim k^{1-\frac{1}{2j}}\zeta^{\frac{1}{2j}-1}||q||_{l_{1}^{2}DU^2}.
\ene

If $k\geq\zeta$, then we have
\bee\label{th75-5}
||q_k||_{l^{2j}_{\zeta}DU^2}\lesssim k^{-s-\frac12}||q||_{H^s}.
\ene
and
\bee\label{th75-6}
||q_{\leq k}||_{l^{2j}_{\zeta}DU^2}\lesssim ||q||_{l^2_1DU^2}.
\ene

Then ,we obtain
\bee
|s_{2j}(e^{\frac{i\pi}{4}}\sqrt{\frac{\zeta}{2}})|\lesssim\zeta^{j-2s-1}\sum_{k_1\geq k_2}C(\zeta,k_1,k_2)||q_{k_1}||_{H^s}||q_{k_2}||_{H^s}||q||_{l_1^2DU^2}^{2j-2}
\ene
where
\bee
C(\zeta,k_1,k_2)=\begin{cases}
(\frac{k_1}{\zeta})^{2(j-s-1)}(\frac{k_2}{k_1})^{2j-s-\frac52+\frac{1}{2j}},\quad k_2\leq k_1\leq\zeta,\vspace{0.03in}\\
(\frac{\zeta}{k_1})^{s+\frac12}(\frac{k_2}{\zeta})^{2j-s-\frac52+\frac{1}{2j}},\quad k_2\leq\zeta\leq k_1,\vspace{0.03in}\\
(\frac{\zeta}{k_1})^{s+\frac12}(\frac{\zeta}{k_2})^{s+\frac12},\quad \zeta\leq k_2\leq k_1.
\end{cases}
\ene

Then, by the Cauchy-Schwarz inequality and Schur's lemma, we get the critical coefficient is $\frac{1}{j-1-s}$. Thus the proof is completed.

\end{proof}

Given $\Sigma_j$ a connected symbol of length $2j$, we will study the asymptotic expressions of the following iterated integral.
\bee\no
T_{\Sigma_j}(\lambda)=\lambda^{2j}\int_{\Sigma_j}\prod_{k=1}^{j}e^{2i\lambda^2(y_k-x_k)}q(y_k)q^{*}(x_k)dx_1dy_1\cdot\cdot\cdot dx_jdy_j.
\ene

\begin{theorem}\label{th9}
The connected integrals $T_{\Sigma_j}(\lambda)$ have the following asymptotic expressions.
\bee\no
T_{\Sigma_j}(\lambda)\sim\sum_{l=0}^{\infty}T_{\Sigma_j}^{l}2^{1-2j-l}\lambda^{-(2j-2+2l)},
\ene
where
\bee\no
T_{\Sigma_j}^{l}=\sum\limits_{|\alpha|+|\beta|=l}c_{\alpha\beta}\int\prod_{k=1}^{j}\partial^{\alpha_k}q^*_k\partial^{\beta_k}q_kdx,
\ene
with
\bee\no
c_{\alpha\beta}=\frac{1}{\alpha!\beta!}\int_{\Sigma_j,x_1=0}\prod e^{y_j-x_j}x_j^{\alpha}y_j^{\beta}dx_jdy_j.
\ene

and the errors in the above expansion have the following bounds:
\begin{align}\no
\begin{aligned}
&|T_{\Sigma_j}(\frac{e^{\frac{i\pi}{4}}\zeta}{\sqrt{2}})-\sum_{l=0}^{k}T_{\Sigma_j}^{l}2^{-j}i^{-(j-1+l)}\zeta^{-(2j-2+2l)}|\vspace{0.05in}\\
\lesssim&\sum\limits_{k+1\leq|\alpha|+|\beta|\leq2j-1+k}2^{-j}|\zeta^2|^{j-|\alpha|-|\beta|}\prod_{k}||\partial^{\alpha_k}q_k^*||_{l^{2j}_{\zeta^2}DU^2}
||\partial^{\beta_k}q_k||_{l^{2j}_{\zeta^2}DU^2}.
\end{aligned}
\end{align}
where $\max\{\alpha_k,\beta_k\}\leq[\frac{k}{2}]+1$ and $\zeta\geq1$.
\end{theorem}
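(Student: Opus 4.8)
The plan is to follow the Fourier--analytic scheme already used for $b_4(\lambda)$ and $b_6(\lambda)$ in Lemmas~\ref{le8} and \ref{le9}, but to organize it as a localization/rescaling argument in physical space so that the coefficients $c_{\alpha\beta}$ emerge directly. First I would specialize to the ray $\lambda=e^{i\pi/4}\zeta/\sqrt2$, i.e. $\lambda^2=i\zeta^2/2$, so that each oscillatory factor $e^{2i\lambda^2(y_k-x_k)}$ becomes the decaying kernel $e^{-\zeta^2(y_k-x_k)}$ on the domain $\Sigma_j$ (where $x_k<y_k$). Since $\mathrm{Im}\,\lambda^2>0$ this makes all the iterated integrals absolutely convergent and, more importantly, concentrates the mass of $T_{\Sigma_j}(\lambda)$ near the diagonal where all gaps $y_k-x_k$ are of size $\mathcal{O}(\zeta^{-2})$.

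To extract the asymptotic coefficients I would use the translation invariance of the connected chain to fix $x_1=0$, introduce rescaled internal variables by writing every coordinate as $x_1+(\,\cdot\,)/\zeta^2$, and Taylor--expand each factor $q(y_k)$ and $q^*(x_k)$ about the base point. Each monomial $(y_k-x_1)^{\beta_k}$, $(x_k-x_1)^{\alpha_k}$ produces a derivative $q^{(\beta_k)}$, $q^{*(\alpha_k)}$ together with a power $\zeta^{-2(\alpha_k+\beta_k)}$ and a rescaled moment of the model exponential over $\Sigma_j$. Collecting by the total homogeneity $l=|\alpha|+|\beta|$, the $\zeta$--powers combine as $\zeta^{2j}\cdot\zeta^{-2(2j-1)}\cdot\zeta^{-2l}=\zeta^{-(2j-2+2l)}$ (the middle factor being the Jacobian of the $2j-1$ rescaled variables), the moment integral is exactly $\alpha!\beta!\,c_{\alpha\beta}$, and converting $\zeta$ back to $\lambda$ reproduces the stated constant $2^{1-2j-l}$ and the leading exponent $\lambda^{-(2j-2)}$ predicted by Lemma~\ref{le10}. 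I would sanity--check the first two coefficients against Lemmas~\ref{le8} and \ref{le9}.

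For the remainder estimate I would truncate each of the $2j$ Taylor expansions at a finite order, so that exactly the terms with $l\le k$ are captured, and apply the integral form of Taylor's theorem to the uncaptured terms. This writes $T_{\Sigma_j}(e^{i\pi/4}\zeta/\sqrt2)-\sum_{l=0}^{k}(\cdots)$ as a finite sum of connected iterated integrals of the same shape as $T_{\Sigma_j}$ but with the potentials replaced by their derivatives $\partial^{\alpha_k}q^*$, $\partial^{\beta_k}q$, with $k+1\le|\alpha|+|\beta|\le 2j-1+k$ and $\max\{\alpha_k,\beta_k\}\le[\frac{k}{2}]+1$. Each such term is then bounded exactly as in Theorem~\ref{th6} and Lemma~\ref{le7}, i.e. by iterating the one-step operator bound for $L$ in the $U^2/DU^2$ framework; this produces the product $\prod_k\|\partial^{\alpha_k}q_k^*\|_{l^{2j}_{\zeta^2}DU^2}\|\partial^{\beta_k}q_k\|_{l^{2j}_{\zeta^2}DU^2}$, while the surplus $\zeta$--weight left over after the $l$ derivatives gives the factor $|\zeta^2|^{j-|\alpha|-|\beta|}$.

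The main obstacle is this last step: one must verify that the Taylor remainder kernels --- the differentiated potentials multiplied by the residual polynomial weights coming from Taylor's theorem --- still fit the hypotheses of Lemmas~\ref{le5} and \ref{le7}, so that the multilinear $DU^2$ bound of Theorem~\ref{th6} applies termwise, and one must track the dilation behaviour of the $l^{2j}_{\zeta^2}DU^2$ norms carefully enough to obtain exactly the power $|\zeta^2|^{j-|\alpha|-|\beta|}$ (using the scaling relations of Lemma~\ref{le6}). Controlling the combinatorics of how the fixed truncation order $k$ distributes among the $2j$ factors --- which is what pins down the ranges $k+1\le|\alpha|+|\beta|\le 2j-1+k$, the bound $\max\{\alpha_k,\beta_k\}\le[\frac{k}{2}]+1$, and the restriction $\zeta\ge1$ --- is the bookkeeping that makes this step delicate rather than routine.
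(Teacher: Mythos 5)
Your first half --- Taylor expansion of each $q(y_k)$, $q^{*}(x_k)$ about the base point $x_1$, collection by the total homogeneity $l=|\alpha|+|\beta|$, identification of the rescaled moments with $\alpha!\beta!\,c_{\alpha\beta}$, and the power count $\zeta^{2j}\cdot\zeta^{-2(2j-1)}\cdot\zeta^{-2l}=\zeta^{-(2j-2+2l)}$ --- is exactly how the paper derives the expansion: its proof writes $T_{\Sigma_j}=\sum_{l}\sum_{|\alpha|+|\beta|=l}T_{\Sigma_j}^{\alpha\beta}$ as in (\ref{th9-1})--(\ref{th9-2}), and your coefficient and exponent bookkeeping agrees with it.

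The genuine gap is in the remainder estimate, at precisely the point you flag but do not resolve. Taylor's theorem with integral remainder leaves factors of the form $q^{(\beta_k+1)}\big(x_1+t(y_k-x_1)\big)$ evaluated at smeared intermediate points and multiplied by unbounded monomial weights $(y_k-x_1)^{\beta_k+1}$; such kernels are not of the point-evaluation multilinear form to which Lemma~\ref{le5}, Lemma~\ref{le7} and Theorem~\ref{th6} apply (those require the potentials to be sampled at the integration variables themselves), and in any case Theorem~\ref{th6} outputs $l^{2}_{\mathrm{Im}\lambda^2}DU^2$ norms, not the $l^{2j}_{\zeta^2}DU^2$ norms appearing in the claimed bound --- passing to exponent $2j$ needs a H\"older splitting across the $2j$ factors that your plan never sets up. The paper's device is different and is the crux of its proof: it integrates by parts repeatedly in the ordered variables, as in (\ref{th9-3}); each step either lowers the monomial degree at the cost of a factor $(2i\lambda^2)^{-1}$ or produces a boundary term in which consecutive variables collapse, so after finitely many steps all polynomial weights are eliminated and the error is a finite sum of connected integrals $R$ over partially collapsed simplices, with the derivatives piling up in products $v_-=\prod_{i=1}^{1+j_-}\partial^{\alpha_i}v_i$ and $v_+$ concentrated at the two collapsed ends. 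These are then estimated by H\"older with the exponents $\tfrac{2j}{j_-+1}$, $2j$, $\tfrac{2j}{j_++1}$, together with the bilinear estimates $\|q_1q_2\|_{l^{p}_{\zeta^2}DU^2}\lesssim\|q_1\|_{l^{q}_{\zeta^2}V^2}\|q_2\|_{l^{r}_{\zeta^2}DU^2}$ (with $\tfrac1p=\tfrac1q+\tfrac1r$) and $\|q\|_{l^{q}_{\zeta^2}V^2}\lesssim\|q'\|_{l^{q}DU^2}+\zeta^{2}\|q\|_{l^{q}DU^2}$. It is this mechanism --- not a termwise citation of Theorem~\ref{th6} --- that pins down the constraint $\max\{\alpha_k,\beta_k\}\leq[\tfrac{k}{2}]+1$, the range $k+1\leq|\alpha|+|\beta|\leq 2j-1+k$, and the weight $|\zeta^2|^{j-|\alpha|-|\beta|}$. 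Without either this integration-by-parts scheme or a weighted extension of the $DU^2$ machinery (which you would have to prove separately, including control of the weights $(y_k-x_1)^{\beta}$ via the connectedness property $y_k-x_1\leq\sum_m(y_m-x_m)$), your remainder step does not go through as written.
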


\begin{proof}
Firstly, we have
\begin{align}\label{th9-1}
\begin{aligned}
T_{\Sigma_j}(\lambda)&=\lambda^{2j}\int_{\Sigma_j}\prod_{k=1}^{j}e^{2i\lambda^2(y_k-x_k)}q(y_k)q^{*}(x_k)dx_1dy_1\cdot\cdot\cdot dx_jdy_j\vspace{0.05in}\\
&=\lambda^{2j}\int_{\Sigma_j}\prod_{k=1}^{j}e^{2i\lambda^2(y_k-x_k)}\sum_{\beta_k=0}^{\infty}\frac{1}{\beta_k!}\partial^{\beta_k}q(x_1)(y_k-x_1)^{\beta_k}\vspace{0.05in}\\
&\quad\times \sum_{\alpha_k=0}^{\infty}\frac{1}{\alpha_k!}\partial^{\alpha_k}q^{*}(x_1)(x_k-x_1)^{\alpha_k}dx_1dy_1\cdot\cdot\cdot dx_jdy_j\vspace{0.05in}\\
&=\sum\limits_{l=0}^{\infty}\lambda^{2j}\int_{\Sigma_j}\sum\limits_{|\alpha|+|\beta|=l}\prod_{k=1}^{j}e^{2i\lambda^2(y_k-x_k)}\frac{1}{\beta_k!}\partial^{\beta_k}q(x_1)(y_k-x_1)^{\beta_k}\vspace{0.05in}\\
&\quad\times \frac{1}{\alpha_k!}\partial^{\alpha_k}q^{*}(x_1)(x_k-x_1)^{\alpha_k}dx_1dy_1\cdot\cdot\cdot dx_jdy_j\vspace{0.05in}\\
&=:\sum\limits_{l=0}^{\infty}\sum\limits_{|\alpha|+|\beta|=l}T_{\Sigma_j}^{\alpha\beta},
\end{aligned}
\end{align}
where $\alpha=(\alpha_1,\alpha_2,\cdot\cdot\cdot,\alpha_j),\beta=(\beta_1,\beta_2,\cdot\cdot\cdot,\beta_j),|\alpha|=\sum_{k=1}^{j}\alpha_k,
|\beta|=\sum_{k=1}^{j}\beta_k$.

For convenience, we redefine the notations:
\begin{align}\no
\begin{aligned}
&\{x_1,y_1,x_2,y_2,\cdot\cdot\cdot,x_j,y_j\}_{\Sigma_j}=\{t_1,t_2,t_3,t_4,\cdot\cdot\cdot,t_{2j-1},t_{2j}\},\vspace{0.05in}\\
&\{q,q^{*},\cdot\cdot\cdot,q,q^*\}_{\Sigma_j}=\{v_1,v_2,\cdot\cdot\cdot,v_{2j-1},v_{2j}\}.
\end{aligned}
\end{align}

Then, we note that
\begin{align}\label{th9-2}
\begin{aligned}
T_{\Sigma_j}^{\alpha\beta}&=\int_{\Sigma_j}\lambda^{2j}\prod_{k=1}^{j}e^{2i\lambda^2(y_k-x_k)}\frac{1}{\beta_k!}\partial^{\beta_k}q(x_1)(y_k-x_1)^{\beta_k}\vspace{0.05in}\\
&\quad\times \frac{1}{\alpha_k!}\partial^{\alpha_k}q^{*}(x_1)(x_k-x_1)^{\alpha_k}dx_1dy_1\cdot\cdot\cdot dx_jdy_j\vspace{0.05in}\\
&=\int_{-\infty}^{+\infty}\int_{t_1}^{+\infty}\int_{t_2}^{+\infty}\cdot\cdot\cdot\int_{t_{2j-1}}^{+\infty}\lambda^{2j}\prod_{k=1}^{j-1}e^{2i\lambda^2(y_k-x_k)}\frac{1}{\beta_k!}\partial^{\beta_k}q(t_1)(y_k-t_1)^{\beta_k}\vspace{0.05in}\\
&\quad\times\frac{1}{\alpha_k!}\partial^{\alpha_k}q^{*}(t_1)(x_k-t_1)^{\alpha_k}e^{-2i\lambda^2x_{j}}\frac{1}{\alpha_{j}!}\partial^{\alpha_{j}}q^{*}(t_1)(x_{j}-t_1)^{\alpha_{j}}\vspace{0.05in}\\
&\quad\times e^{2i\lambda^2t_{2j}}\frac{1}{\beta_j!}\partial^{\beta_j}q(t_1)(t_{2j}-t_1)^{\beta_j}
dt_1dt_2\cdot\cdot\cdot dt_{2j-1}dt_{2j}.
\end{aligned}
\end{align}

We first calculate the integral $T_{\Sigma_j}^{\alpha\beta}$ about $t_{2j}$, and we have
\begin{align}\label{th9-3}
\begin{aligned}
&\int_{t_{2j-1}}^{+\infty}e^{2i\lambda^2t_{2j}}\frac{1}{\beta_j!}\partial^{\beta_j}q(t_1)(t_{2j}-t_1)^{\beta_j}dt_{2j}\vspace{0.05in}\\
=&-\frac{1}{2i\lambda^2}e^{2i\lambda^2t_{2j-1}}\frac{1}{\beta_j!}\partial^{\beta_j}q(t_1)(t_{2j-1}-t_1)^{\beta_j}\vspace{0.05in}\\
&-\int_{t_{2j-1}}^{+\infty}\frac{1}{2i\lambda^2}e^{2i\lambda^2t_{2j}}\frac{1}{(\beta_j-1)!}\partial^{\beta_j}q(t_1)
(t_{2j}-t_1)^{\beta_j-1}dt_{2j}\vspace{0.05in}\\
=&\sum_{k=1}^{\beta_j}(-1)^k\frac{1}{(2i\lambda^2)^k}e^{2i\lambda^2t_{2j-1}}\frac{1}{(\beta_j-k+1)!}\partial^{\beta_j}q(t_1)(t_{2j-1}-t_1)^{\beta_j-k+1}\vspace{0.05in}\\
&+(-1)^{\beta_j}\int_{t_{2j-1}}^{+\infty}\frac{1}{(2i\lambda^2)^{\beta_j}}e^{2i\lambda^2t_{2j}}\partial^{\beta_j}q(t_1)dt_{2j}\vspace{0.05in}\\
=&\sum_{k=1}^{\beta_j+1}(-1)^k\frac{1}{(2i\lambda^2)^k}e^{2i\lambda^2t_{2j-1}}\frac{1}{(\beta_j-k+1)!}\partial^{\beta_j}q(t_1)(t_{2j-1}-t_1)^{\beta_j-k+1}.
\end{aligned}
\end{align}

Thus, the corresponding terms in the errors are linear combinations of the following integrals:
\begin{align}\no
\begin{aligned}
R=&\frac{1}{2^j\zeta^{2j-2+2l}}\int_{t_1=\cdot\cdot\cdot=t_{j_-+1}<\cdot\cdot\cdot<t_{2j-j_+}=\cdot\cdot\cdot
=t_{2j}}e^{\zeta^2\sum\limits_{i=1}^jx_i-y_i}
\prod_{i=1}^{2j}\partial^{\alpha_i}v_i(t_i)dt_{j_-+1}\cdot\cdot\cdot dt_{2j-j_+},
\end{aligned}
\end{align}
where
\begin{align}\no
\begin{aligned}
&0\leq j_-\leq \alpha_-=[\frac{k+2}{2}],\quad 0\leq j_+\leq \alpha_+=k+1-[\frac{k+2}{2}],\quad j_-+j_+\leq 2j-2,\vspace{0.05in}\\
&\sum\limits_{i=1}^{1+j_-}\alpha_i=[\frac{k+1}{2}],\quad \sum\limits_{i=2j-j_+}^{2j}\alpha_i=[\frac{k+2}{2}].
\end{aligned}
\end{align}

Similar to the proof method of Theorem \ref{th6}, we have
\bee\no
|R|\lesssim\frac{1}{2^j\zeta^{2j-2+2l}}||v_-||_{l^{\frac{2j}{j_-+1}}_{\zeta^2}DU^2}
\prod_{i=2+j_-}^{2j-j_+-1}||v_i||_{l^{2j}_{\zeta^2}DU^2}||v_+||_{l^{\frac{2j}{j_++1}}_{\zeta^2}DU^2},
\ene
where
\bee\no
v_-=\prod_{i=1}^{1+j_-}\partial^{\alpha_i}v_i,\quad v_-=\prod_{i=2j-j_+}^{2j}\partial^{\alpha_i}v_i.
\ene

We note that
\begin{align}\no
\begin{aligned}
&||q_1q_2||_{l_{\zeta^2}^pDU^2}\lesssim\bigg|\bigg|||\chi_[\frac{k}{\zeta^2},\frac{k+1}{\zeta^2})q_1||_{V^2}\bigg|\bigg|_{l^q}
\bigg|\bigg|||\chi_[\frac{k}{\zeta^2},\frac{k+1}{\zeta^2})q_2||_{DU^2}\bigg|\bigg|_{l^r},\quad \frac{1}{p}=\frac{1}{q}+\frac{1}{r}\vspace{0.05in}\\
&||q||_{l^q_{\zeta^2}V^2}\lesssim ||q^{'}||_{l^qDU^2}+\zeta^2||q||_{l^qDU^2},
\end{aligned}
\end{align}
where $p\geq2$.

Then we have the following estimate
\bee\no
||v_-||_{l^{\frac{2j}{j_-+1}}_{\zeta^2}DU^2}\lesssim||\partial^{\alpha_1}v_1||_{l^{2j}DU^2}\prod_{i=2}^{j_--1}(
||\partial^{\alpha_i+1}v_i||_{l^{2j}DU^2}+\frac{1}{\zeta^2}||\partial^{\alpha_i}v_i||_{l^{2j}DU^2}).
\ene

We argue similarly for $v_+$. Thus the proof is completed.
\end{proof}

Based on the above analysis, we obtain the following Corollary.
\begin{corollary}\label{coro2}
The following estimate holds:
\begin{align}\no
\begin{aligned}
&|T_{\Sigma_j}(e^{\frac{i\pi}{4}}\sqrt{\frac{\zeta}{2}})-\sum_{l=0}^{k}T_{\Sigma_j}^{l}2^{-j}(i\zeta)^{-(j-1+l)}|\vspace{0.05in}\\
&\quad \lesssim\sum\limits_{k+1\leq|\alpha|+|\beta|\leq2j-1+k}2^{-j}\zeta^{j-|\alpha|-|\beta|}\prod_{k}||\partial^{\alpha_k}q_k^*||_{l^{2j}_{\zeta^2}DU^2}
||\partial^{\beta_k}q_k||_{l^{2j}_{\zeta^2}DU^2}.
\end{aligned}
\end{align}
where $\max\{\alpha_k,\beta_k\}\leq[\frac{k}{2}]+1$ and $\zeta\geq1$.
\end{corollary}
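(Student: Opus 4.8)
The plan is to derive Corollary \ref{coro2} directly from Theorem \ref{th9} by a reparametrization of the spectral variable, since the two statements are the same estimate written with $\lambda$ expressed differently in terms of $\zeta$. First I would note the algebraic identity
\[
e^{\frac{i\pi}{4}}\sqrt{\tfrac{\zeta}{2}}=\frac{e^{\frac{i\pi}{4}}\sqrt{\zeta}}{\sqrt{2}},
\]
which shows that the point at which $T_{\Sigma_j}$ is evaluated in the corollary coincides with the point $\frac{e^{i\pi/4}\zeta}{\sqrt 2}$ of Theorem \ref{th9} after the substitution $\zeta\mapsto\sqrt{\zeta}$. Since the corollary imposes $\zeta\geq1$, we also have $\sqrt{\zeta}\geq1$, so the hypothesis $\zeta\geq1$ of Theorem \ref{th9} is met and the whole estimate may be applied with $\sqrt{\zeta}$ in place of $\zeta$.

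The main computation is then to simplify the subtracted (leading) term. Replacing $\zeta$ by $\sqrt{\zeta}$ turns $\zeta^{-(2j-2+2l)}$ into $(\sqrt{\zeta})^{-(2j-2+2l)}=\zeta^{-(j-1+l)}$, and absorbing the phase $i^{-(j-1+l)}$ that already multiplies each term yields
\[
i^{-(j-1+l)}\,\zeta^{-(j-1+l)}=(i\zeta)^{-(j-1+l)},
\]
so $\sum_{l=0}^{k}T_{\Sigma_j}^{l}2^{-j}i^{-(j-1+l)}(\sqrt{\zeta})^{-(2j-2+2l)}$ becomes exactly $\sum_{l=0}^{k}T_{\Sigma_j}^{l}2^{-j}(i\zeta)^{-(j-1+l)}$, the form claimed in the corollary. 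This recombination of the complex phase with the resulting half-integer power of $\zeta$ into a single power of $i\zeta$ is the only step that requires genuine attention.

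On the error side I would carry out the same substitution. The factor $|\zeta^2|^{j-|\alpha|-|\beta|}$ becomes $|(\sqrt{\zeta})^2|^{j-|\alpha|-|\beta|}=\zeta^{j-|\alpha|-|\beta|}$ because $\zeta\geq1$, matching the corollary, while the scale of the $l^{2j}DU^2$ norms tracks $\mathrm{Im}\,\lambda^2$ and is transported through the substitution accordingly. The index range $k+1\leq|\alpha|+|\beta|\leq2j-1+k$ and the derivative bound $\max\{\alpha_k,\beta_k\}\leq[\frac{k}{2}]+1$ do not involve $\zeta$, so they are unchanged. Assembling these pieces reproduces the stated inequality.

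The hard part is essentially nil beyond this bookkeeping: all of the analytic content---the Taylor expansion of $q$ and $q^*$ about $x_1$, the iterated integration by parts in the $t$-variables, and the bilinear $U^2/DU^2$ bounds from Lemma \ref{le4} and Lemma \ref{le6}---has already been established in Theorem \ref{th9}, and the corollary merely restates that estimate in the variable convenient for the $H^s$ arguments that follow.
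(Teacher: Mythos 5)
Your proposal is correct and coincides with the paper's own (implicit) derivation: the corollary is exactly Theorem \ref{th9} evaluated after the substitution $\zeta\mapsto\sqrt{\zeta}$, the only computations being the recombination $i^{-(j-1+l)}(\sqrt{\zeta})^{-(2j-2+2l)}=(i\zeta)^{-(j-1+l)}$ in the main term and $|\zeta^2|^{j-|\alpha|-|\beta|}\mapsto\zeta^{j-|\alpha|-|\beta|}$ in the error, both of which you carry out correctly. One caveat worth recording: performed consistently, the substitution also rescales the norms from $l^{2j}_{\zeta^2}DU^2$ to $l^{2j}_{\zeta}DU^2$ (at the new evaluation point $\mathrm{Im}\,\lambda^2=\zeta/2$), so the subscript $\zeta^2$ in the printed corollary is evidently a typo inherited from Theorem \ref{th9}; your phrase ``transported through the substitution accordingly'' produces the corrected $l^{2j}_{\zeta}$ version, which is the one actually used afterwards in Theorem \ref{1-th1} and which could not be upgraded to the printed $l^{2j}_{\zeta^2}$ bound, since by Lemma \ref{le6} that norm is the smaller of the two when $\zeta\geq1$.
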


\begin{theorem}\label{1-th1}
Let $q(x)\in H^s(\mathbb{R})$ and $j-1+\frac{k_1}{2}\leq s\leq j-1+\frac{k_1+1}{2}\, (j,\, k_1\in \mathbb{Z}^+)$. Define the following iterated integral:
\bee\no
T_{\Sigma_j}(\lambda)=\lambda^{2j}\int_{\Sigma_j}\prod_{k=1}^{j}e^{2i\lambda^2(y_k-x_k)}q(y_k)q^{*}(x_k)dx_1dy_1\cdot\cdot\cdot dx_jdy_j,
\ene
and
\bee\no
T_{\Sigma_j}^{l}=\sum\limits_{|\alpha|+|\beta|=l}c_{\alpha\beta}\int\prod_{k=1}^{j}\partial^{\alpha_k}q^*_k\partial^{\beta_k}q_kdx,
\ene
with
\bee\no
c_{\alpha\beta}=\frac{1}{\alpha!\beta!}\int_{\Sigma_j,x_1=0}\prod e^{y_j-x_j}x_j^{\alpha}y_j^{\beta}dx_jdy_j.
\ene
Then the following error estimates hold:
\bee\no
|T_{\Sigma_j}(e^{\frac{i\pi}{4}}\sqrt{\frac{\zeta}{2}})-\sum_{l=0}^{k_1}T_{\Sigma_j}^{l}2^{-j}(i\zeta)^{-(j-1+l)}|
\lesssim2^{-j}\zeta^{j-2s-1}||q||^2_{H^s}||q||^{2j-2}_{l_1^2DU^2},
\ene
and
\begin{align}\no
\begin{aligned}
&\int_{1}^{+\infty}2^j\zeta^{2s-j}|T_{\Sigma_j}(e^{\frac{i\pi}{4}}\sqrt{\frac{\zeta}{2}})
-\sum_{l=0}^{k_1}T_{\Sigma_j}^{l}2^{-j}(i\zeta)^{-(j-1+l)}|d\zeta
\lesssim \frac{1}{|\sin(2\pi s)|}||q||^2_{H^s}||q||^{2j-2}_{l_1^2DU^2},
\end{aligned}
\end{align}
where $\Sigma_j$ is an appropriate domain which obeys $x_k<y_k$ for all $k~(k\leq j)$.

\end{theorem}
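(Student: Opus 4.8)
The plan is to derive both estimates directly from Corollary~\ref{coro2}, which already bounds the truncation error $T_{\Sigma_j}(e^{\frac{i\pi}{4}}\sqrt{\zeta/2})-\sum_{l=0}^{k_1}T_{\Sigma_j}^{l}2^{-j}(i\zeta)^{-(j-1+l)}$ by a finite sum of products of $l^{2j}_{\zeta^2}DU^2$ norms of the differentiated pieces $\partial^{\alpha_k}q_k^*$ and $\partial^{\beta_k}q_k$, with $k_1+1\leq|\alpha|+|\beta|\leq 2j-1+k_1$ and $\max\{\alpha_k,\beta_k\}\leq[\frac{k_1}{2}]+1$. The remaining work is therefore purely to convert these $DU^2$ norms of dyadic, differentiated factors into $H^s$ norms, exactly as in the proofs of Theorems~\ref{th7} and~\ref{th75}. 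I would begin by inserting the Littlewood--Paley decomposition $q=\sum_k q_k$ into each factor and distributing the derivatives, so that every term in the Corollary~\ref{coro2} bound becomes a product of $2j$ expressions of the form $||\partial^{\alpha_i}q_{k_i}||_{l^{2j}_{\zeta^2}DU^2}$.

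The second step is the frequency case analysis. Using Lemma~\ref{le6}(a),(d) I would estimate each differentiated dyadic factor by splitting into the low-frequency regime $k_i<\zeta$ and the high-frequency regime $k_i\geq\zeta$, picking up powers $k_i^{\alpha_i-s}\zeta^{\cdots}$ or $k_i^{\alpha_i-s-\frac12}$ respectively, in direct analogy with the estimates (\ref{th7-1}) and (\ref{th75-3})--(\ref{th75-6}). The constraint $\max\{\alpha_k,\beta_k\}\leq[\frac{k_1}{2}]+1$ guarantees that every derivative order appearing is controlled by $s$ on the admissible range $j-1+\frac{k_1}{2}\leq s\leq j-1+\frac{k_1+1}{2}$, and it is exactly this range that fixes the number $k_1+1$ of terms that must be subtracted. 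Two of the $2j$ factors, namely the two highest frequencies, retain the $H^s$ norm, while the remaining $2j-2$ factors are absorbed into $||q||^{2j-2}_{l_1^2DU^2}$ via the monotonicity in Lemma~\ref{le6}(d). Optimising the leading term $|\alpha|+|\beta|=k_1+1$ over $\zeta$ produces the declared power $\zeta^{j-2s-1}$, which yields the pointwise bound.

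For the integrated estimate I would multiply the dyadic expansion by $2^j\zeta^{2s-j}$, integrate over $\zeta\in[1,\infty)$, and then apply the Cauchy--Schwarz inequality in the two retained high-frequency indices followed by Schur's test on the resulting kernel $C(\zeta,k_1,k_2)$, exactly as at the end of the proofs of Theorems~\ref{th7} and~\ref{th75}. Convergence of the $\zeta$-integral and of the dyadic sums then reduces to controlling a finite collection of geometric series whose exponents are the various combinations $s+\frac12$, $j-1-s$, and so on, shifted by the summation index $l$.

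The main obstacle, and the only genuinely new point, is the appearance of the sharp constant $\frac{1}{|\sin(2\pi s)|}$. Each geometric series produced above is summable except on a threshold set of $s$, and these thresholds are precisely the half-integers falling in the admissible window; at each such value one series becomes critical and its sum blows up like $\frac{1}{s-n/2}$. The task is to show that, after summing the contributions from all terms $l=k_1+1,\dots,2j-1+k_1$ and from both endpoints of each regime, the resulting sharp constant is a function of $s$ with simple poles at every $s\in\frac12\mathbb{Z}$ and comparable residues, hence dominated by $\frac{1}{|\sin(2\pi s)|}$ (whose poles match via $|\sin(2\pi s)|\sim 2\pi|s-n/2|$ near each half-integer). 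Making this identification precise, rather than merely recording the individual critical coefficients $\frac{1}{2s+1}$ and $\frac{1}{j-1-s}$ as in the earlier theorems, is where the bookkeeping is most delicate.
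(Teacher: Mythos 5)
Your proposal follows exactly the paper's route: the paper's entire proof of Theorem \ref{1-th1} is the single sentence ``According to Corollary \ref{coro2}, we can show this Theorem,'' and your plan --- Corollary \ref{coro2} followed by the Littlewood--Paley frequency splitting that converts the $l^{2j}_{\zeta^2}DU^2$ norms into $H^s$ norms as in Theorems \ref{th7} and \ref{th75}, with Cauchy--Schwarz and Schur's lemma for the integrated bound --- is precisely the argument the paper leaves implicit. You in fact supply substantially more detail than the paper does, including the one genuinely new point (tracking how the critical geometric series blow up like $\frac{1}{s-n/2}$ at half-integer thresholds, matching the poles of $\frac{1}{|\sin(2\pi s)|}$), which the paper omits entirely.
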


\begin{proof} According to Corollary \ref{coro2}, we can show this Theorem.
\end{proof}

\section{Conclusions and Discussions}

We prove the well-posedness results of scattering data for the derivative nonlinear Schr\"odinger equation in the $H^{s}(\mathbb{R})(s\geq\frac12)$. We show that $s_{11}(\lambda)$ can be written as the sum of some iterative integrals, and its logarithm $\ln s_{11}(\lambda)$ can be written as the sum of some connected iterative integrals. And we provide the asymptotic properties of the first few iterative integrals of $s_{11}(\lambda)$. Moreover, we provide some regularity properties of $s_{11}(\lambda)$ related to scattering data in $H^{s}(\mathbb{R})$.

\vspace{0.2in}
\noindent {\bf Acknowledgments}

\vspace{0.05in}
This work was supported by the National Natural Science Foundation of China (Grant No. 11925108)

\end{document}